\documentclass[11pt,a4paper]{article}
% packages
\usepackage[utf8]{inputenc}
\usepackage[english]{babel}
\usepackage[T1]{fontenc} % -> Im PDF kann man Sonderzeichen wie bei Tur\'an kopieren
\usepackage{lmodern} %sinnvoll, Schrift nicht mehr pixlig
\usepackage{xcolor} %Paket fuer Farbe
\usepackage{color}
\usepackage{amssymb} %fuer natuerliche-zahlen-symbol
\usepackage{mathrsfs} %fuer geschwungenes L
\usepackage{amsmath}
\usepackage{amsthm}%fuer Theoreme, proof
\usepackage{amsopn}
\usepackage{enumitem}%for enumerate/itemize without indentation -> [wide=0pt] or [leftmargin=*] -> for [label=\textbf{S.\arabic*}]
\usepackage{graphicx} % für \bigcupdot notwendig
\usepackage{subcaption}
\usepackage{xspace} %bei Abkuerzung von d.h. \xspace in newcommand
\usepackage{mathtools} %noetig fuer declare paired delimiter -> gaussklammer so definiert
\usepackage{scalerel,stackengine} % for the newcommand of the of fourier hat
\usepackage{url}
\usepackage{hyperref}
\usepackage{doi}
\usepackage{tikz}
\usepackage{nomencl} % list of notation
\usepackage[margin=3cm]{geometry}
\usepackage{pictexwd,dcpic}

\theoremstyle{plain}

\newtheorem{Theorem}{Theorem}[section]
\newtheorem{Lemma}[Theorem]{Lemma}
\newtheorem{Proposition}[Theorem]{Proposition}

\theoremstyle{definition}

\newtheorem{Definition}[Theorem]{Definition}
\newtheorem{Example}[Theorem]{Example}

\newtheorem{Algorithm}[Theorem]{Algorithm}

\theoremstyle{remark}

\newtheorem{Remark}[Theorem]{Remark}

\makeatletter
\newtheoremstyle{case}{3pt}{3pt}{\addtolength{\@totalleftmargin}{2.0em}\addtolength{\linewidth}{-1.5em}\parshape 1 1.5em \linewidth}{}{\kern-1.5em\itshape}{.}{ }{\thmnote{#3}}
\makeatother
\theoremstyle{case}

\makeatletter
\newtheoremstyle{caseinside}{3pt}{3pt}{\addtolength{\@totalleftmargin}{4.0em}\addtolength{\linewidth}{-3.0em}\parshape 1 3.0em \linewidth}{}{\kern-1.5em\itshape}{.}{ }{\thmnote{#3}}
\makeatother
\theoremstyle{caseinside}

% END OF ENVIRONMENT FOR CASES IN PROOF

\bibliographystyle{abbrv} %amsplain % The latter two may be preferable since those are the styles used by AMS publications. The "plain" versions of these styles generate numeric labels, the "alpha" versions generate alphanumeric labels constructed from the authors' initials and the year of publication. 

 %aufzaehlungen

%\input{Formal/Commands}

%------------------------------------
%%Commands: 

% DeclareMathOperator
 %general affine group
\DeclareMathOperator{\aff}{aff} % affine hull
\DeclareMathOperator*{\argmin}{arg\,min}
 %automorphism group

\DeclareMathOperator{\E}{E}     %Euclidean group
\DeclareMathOperator{\GL}{\mathrm{GL}}
 % group of hermitian matrices
\DeclareMathOperator{\Ind}{Ind} %induced representation

\DeclareMathOperator{\reldiff}{Relative\,difference}
 %induced representation
\DeclareMathOperator{\Skew}{Skew}
\DeclareMathOperator{\SO}{\mathrm{SO}}
  % \span is a primitive command used in \multicolumn.
\DeclareMathOperator{\supp}{supp}
 % symmetric
 % group of translations \trans(d)

\DeclareMathOperator{\U}{\mathrm{U}} % unitary group
\renewcommand{\O}{\operatorname{{\mathrm O}}} %orthogonal group
\renewcommand{\Re}{\operatorname{Re}} % real part
\renewcommand{\Im}{\operatorname{Im}} % imaginary part

% DeclarePairedDelimiter
 %\floor[\big]{x} \floor[\Big]{x} \floor[\bigg]{x} \floor[\Bigg]{x}
\DeclarePairedDelimiter{\abs}{\lvert}{\rvert}
\DeclarePairedDelimiter{\angles}{\langle}{\rangle}
\DeclarePairedDelimiter{\norm}{\lVert}{\rVert}
\DeclarePairedDelimiter{\parens}{\lparen}{\rparen}

\DeclarePairedDelimiter{\braces}{\lbrace}{\rbrace}

\DeclarePairedDelimiterX{\iso}[2]{(}{)}{#1,#2} % (A,b) is better than (A|b) since E(d)=O(d)\ltimes\R^d, but for examples like (0&0\\1&1|0\\1) is (A|b) better than (A,b)
\DeclarePairedDelimiterX{\set}[2]{\{}{\}}{#1\,\delimsize|\,#2}
\DeclarePairedDelimiterX{\scalar}[2]{\langle}{\rangle}{#1,#2}
% definition of absone (so kompliziert, damit \absone k^2 funktioniert)
\makeatletter
\newcommand{\@absonestar}[1]{\abs*{#1}_1}
\newcommand{\@absonenostar}[2][]{\abs[#1]{#2}_1}
\newcommand{\absone}{\@ifstar\@absonestar\@absonenostar}
\makeatother
% definition of zeronorm (so kompliziert, damit \zeronorm.. ^2 funktioniert)
\makeatletter
\newcommand{\@zeronormstar}[2]{\norm*{#1}_{#2,0}}
\newcommand{\@zeronormnostar}[3][]{\norm[#1]{#2}_{#3,0}}
\newcommand{\zeronorm}{\@ifstar\@zeronormstar\@zeronormnostar}
\makeatother
 %in section funktioniert der Befehl \zeronorm nicht
% definition of nablanorm (so kompliziert, damit \nablanorm.. ^2 funktioniert)
\makeatletter
\newcommand{\@nablanormstar}[2]{\norm*{#1}_{#2,\nabla}}
\newcommand{\@nablanormnostar}[3][]{\norm[#1]{#2}_{#3,\nabla}}
\newcommand{\nablanorm}{\@ifstar\@nablanormstar\@nablanormnostar}
\makeatother
 %in section funktioniert der Befehl \nablanorm nicht
% definition of nablazeronorm (so kompliziert, damit \nablazeronorm.. ^2 funktioniert)
\makeatletter
\newcommand{\@nablazeronormstar}[2]{\norm*{#1}_{#2,0,\nabla,0}}
\newcommand{\@nablazeronormnostar}[3][]{\norm[#1]{#2}_{#3,\nabla,0}}
\newcommand{\nablazeronorm}{\@ifstar\@nablazeronormstar\@nablazeronormnostar}
\makeatother
 %in section funktioniert der Befehl \nablanorm nicht
% definition of newnorm (so kompliziert, damit \newnorm.. ^2 funktioniert)
\makeatletter
\newcommand{\@newnormstar}[2]{\norm*{#1}_{#2,0,0}}
\newcommand{\@newnormnostar}[3][]{\norm[#1]{#2}_{#3,0,0}}
\newcommand{\newnorm}{\@ifstar\@newnormstar\@newnormnostar}
\makeatother
 %in section funktioniert der Befehl \newnorm nicht
% definition of newnablanorm (so kompliziert, damit \newnablanorm.. ^2 funktioniert)
\makeatletter
\newcommand{\@newnablanormstar}[2]{\norm*{#1}_{#2,\nabla,0,0}}
\newcommand{\@newnablanormnostar}[3][]{\norm[#1]{#2}_{#3,\nabla,0,0}}
\newcommand{\newnablanorm}{\@ifstar\@newnablanormstar\@newnablanormnostar}
\makeatother
 %in section funktioniert der Befehl \nablanorm nicht
% cupdot
\makeatletter
\providecommand*{\cupdot}{%
  \mathbin{%
    \mathpalette\@cupdot{}%
  }%
}
\newcommand*{\@cupdot}[2]{%
  \ooalign{%
    $\m@th#1\cup$\cr
    \sbox0{$#1\cup$}%
    \dimen@=\ht0 %
    \sbox0{$\m@th#1\cdot$}%
    \advance\dimen@ by -\ht0 %
    \dimen@=.5\dimen@
    \hidewidth\raise\dimen@\box0\hidewidth
  }%
}
% bigcupdot
\providecommand*{\bigcupdot}{%
  \mathop{%
    \vphantom{\bigcup}%
    \mathpalette\@bigcupdot{}%
  }%
}
\newcommand*{\@bigcupdot}[2]{%
  \ooalign{%
    $\m@th#1\bigcup$\cr
    \sbox0{$#1\bigcup$}%
    \dimen@=\ht0 %
    \advance\dimen@ by -\dp0 %
    \sbox0{\scalebox{2}{$\m@th#1\cdot$}}%
    \advance\dimen@ by -\ht0 %
    \dimen@=.5\dimen@
    \hidewidth\raise\dimen@\box0\hidewidth
  }%
}
\makeatother

%fourierhat
%\newcommand{\fourier}[1]{\widehat{#1}}
\stackMath
\newcommand\fourier[1]{%
\savestack{\tmpbox}{\stretchto{%
  \scaleto{%
    \scalerel*[\widthof{\ensuremath{#1}}]{\kern-.6pt\bigwedge\kern-.6pt}%
    {\rule[-\textheight/2]{1ex}{\textheight}}%WIDTH-LIMITED BIG WEDGE
  }{\textheight}% 
}{0.5ex}}%
\stackon[1pt]{#1}{\tmpbox}%
}
\par

% miscellaneous
\newcommand\+{\mkern2mu} %spacing for \exists -> https://tex.stackexchange.com/questions/101018/should-using-spaces-in-math-mode-be-a-common-thing

\newcommand{\dual}[1]{\widehat{#1}}
\newcommand{\fdot}{{{}\cdot{}}} %spacing of $f(\cdot)$ in not equal to that of $f({}\cdot{})$, also $\lvert\cdot\rvert$ vs $\lvert{}\cdot{}\rvert$ -> good for norm and abs
\newcommand{\gdot}{\cdot} %group action
\newcommand{\gplus}[2]{#1\oplus #2}

\newcommand{\simrg}{\sim} % {\sim_{\R^{d_2},\G}}

% numbers
\newcommand{\euler}{\mathrm e}
\newcommand{\iu}{\mathrm i}
\newcommand{\C}{{\mathbb C}}
\newcommand{\N}{{\mathbb N}}

\newcommand{\R}{{\mathbb R}}
 % torus
\newcommand{\Z}{{\mathbb Z}}
\renewcommand{\epsilon}{\varepsilon} %Bernd benutzt varepsilon
\renewcommand{\phi}{\varphi} %Bernd benutzt varphi (cut-off function)

% groups
\newcommand{\A}{\mathcal A}

\newcommand{\F}{\mathcal F} %'big' finite group
\newcommand{\G}{\mathcal G}
\newcommand{\Gstar}{{\G_{\text{\textasciitilde}}^*}}
\newcommand{\GstarExample}{{\G\setminus\{\id\}}}
\newcommand{\Gcoset}{{\G_{\text{\textasciitilde}}}}
\newcommand{\Rcoset}{{\RR_{\text{\textasciitilde}}}}
\newcommand{\RRVset}{\RR_V}
\newcommand{\RRVcoset}{\tilde{\RR}_V}
\renewcommand{\H}{\mathcal H}
\newcommand{\id}{{id}} %identity of a group
 % subgroup of \H
\newcommand{\LL}{L_\SG} % lattice
\newcommand{\dualL}{L_\SG^*} % dual lattice
\newcommand{\MM}{M_0} % M_0=\set{n\in\N}{\T^m is a normal subgroup of \G}
\renewcommand{\P}{\mathcal P} %'big' point set
\newcommand{\CC}{\mathcal C} % Cuboid
 % Cuboid
\newcommand{\EE}{\mathcal E} % irreducble representations
 % energy -> Nanotube
\newcommand{\NN}{\mathcal N} % normal subgroup and nearest neighbors
 % representation system of $\dual(\T\F)/~$
\newcommand{\RR}{\mathcal R} % range (derivative)
\newcommand{\SG}{\mathcal S} %space group in R^{d_1}
\newcommand{\T}{\mathcal T} %'big' translation group
\newcommand{\daff}{{d_{\mathrm{aff}}}} %dimension of the objective structure
% group maps

\newcommand{\rot}{L} % linear component
\newcommand{\trans}{\tau} % translational component
 % \taubar\colon \P_0\times\P_0 \to \R^d
 % % \tildetau\colon\P_0\to\R^d

% points
%\newcommand{\yy}{y_0} % (g\gdot x_0-x_0)_{g\in\Gstar}
% function
\newcommand{\xx}{\chi_\G x_0} % constant map x_0
\newcommand{\ff}{{f_V}} % second derivative of the energy function
\newcommand{\gggg}{g_\RR} % the norm function as a convolution
\newcommand{\lambdamin}{\lambda_{\min}}
\newcommand{\newgggg}{g_{\RR,0,0}}
% constant
\newcommand{\ee}{e_V}
\newcommand{\eea}{e_{V_{a, \alpha_0}}}

\newcommand{\lambdaa}{\lambda_{\textnormal{a}}} % stability constant 
\newcommand{\lambdanewa}{\lambda_{\textnormal{a},0,0}}
\newcommand{\proj}[1]{\bar{#1}}
\newcommand{\pproj}[1]{\pi(#1)}
\newcommand{\indi}{\chi}

% vector spaces
%\newcommand{\Vrot}[1]{\operatorname{V_{\mathrm{rot}}}(#1)} %superior: \DeclareMathOperator{\Vrot}{V_{\mathrm{rot}}} ->  -> $\Vrot(\mathcal R)
\newcommand{\UIso}{U_{\mathrm{iso},0,0}}
\newcommand{\Uiso}[1]{U_{\mathrm{iso}}(#1)}

\newcommand{\zeroUiso}[1]{U_{\mathrm{iso},0}(#1)}
\newcommand{\Unewiso}[1]{U_{\mathrm{iso},0,0}(#1)}

\newcommand{\Urot}[1]{U_{\mathrm{rot}}(#1)}
\newcommand{\zeroUrot}[1]{U_{\mathrm{rot},0}(#1)}
\newcommand{\Unewrot}[1]{U_{\mathrm{rot},0,0}(#1)}
\newcommand{\UTrans}{U_{\mathrm{trans}}}
\newcommand{\Utrans}[1]{U_{\mathrm{trans}}(#1)}
\newcommand{\UPer}{U_{\mathrm{per}}}
\newcommand{\UPerC}{U_{\mathrm{per},\C}}
\newcommand{\Per}{L_\mathrm{per}^\infty}
%seminorms finite-dimensional domain

%\newcommand{\semitildeq}[3][]{\tilde q_{#2,#3}^{#1}}
% seminorm infinite-dimensional domain

% properties

 % x_0 = (x_{0,1},x_{0,2}) \in \R^{d_1,d_2}

% abbreviations
\newcommand{\eg}{\mbox{e.\,g.}\xspace}
\newcommand{\ie}{\mbox{i.\,e.}\xspace}

\allowdisplaybreaks

%-------------------------------------------------------------------
%-------------------------------------------------------------------
%-------------------------------------------------------------------
%-------------------------------------------------------------------
\begin{document}
\begin{center}
\begin{Large}
Stability of Objective Structures: General Criteria and Applications
\end{Large}
\\[0.5cm]
\begin{large}
Bernd Schmidt\footnote{Universität Augsburg, Germany, {\tt bernd.schmidt@math.uni-augsburg.de}} and 
Martin Steinbach\footnote{Universität Augsburg, Germany, {\tt steinbachmartin@gmx.de}} 
\end{large}
\\[0.5cm]
\today
\\[1cm]
\end{center}

\begin{abstract}
We develop a general stability analysis for objective structures, which constitute a far reaching generalization of crystal lattice systems. 
We show that these particle systems, although in general neither periodic nor space filling, allow for the identification of stability constants in terms of representations of the underlying symmetry group and interaction potentials. 
Our main results provide general stability criteria and second order energy bounds for equilibrium configurations. 
In particular, a general computational algorithm to test objective structures for their stability is derived. 
By way of example  we show that our method can be applied to verify the stability of carbon nanotubes with chirality. 
\end{abstract}

2020 {\em Mathematics Subject Classification}. 
70C20, % Mechanics of particles and systems: Statics 
70J25, %   	Stability for problems in linear vibration theory
74Kxx, % 		Thin bodies, structures; in: Mechanics of deformable solids
49K40. %   	Sensitivity, stability, well-posedness

{\em Key words and phrases.} Objective structures,  atomistic systems, stability, nanotubes.  

\tableofcontents

%-------------------------------------------------------------------
%-------------------------------------------------------------------
\section{Introduction}

A fundamental problem in material science is to investigate elastic structures for their stability properties. In classical continuum mechanics stability criteria can be derived from positivity properties of the Hessian of the stored energy function such as the classical Legendre-Hadamard condition. For atomistic systems the continuum approach, however, can imply stability only in a regime in which the Cauchy-Born rule is known to be valid and small scale oscillations are excluded a priori. In such a regime individual atoms follow the macroscopic deformation so that the configurational energy can effectively be described by a macroscopic continuum stored energy function. 

In general, however, stability conditions in the continuum cannot detect small wave-length instabilities when modes at the interatomic length-scale are excited. While systems of only few atoms may be investigated by computational methods, in high dimensional systems with many or even an infinite number of particles a direct and fully discrete numerical approach is not feasible. It is thus a fundamental challenge to identify classes of structures that, on the one hand, are general enough to cover many interesting and physically relevant examples and, on the other hand, are still amenable to a quantitative stability analysis. 

Due to their ubiquity in nature, crystalline atomic systems have been extensively investigated over the last decades by physicists and material scientists, cp.\ \cite{Born1954,Kittel1996,Wallace1998}. More recently, there has been a vital interest also within the applied analysis community to rigorously substantiate the connection of atomistic systems and effective continuum models. A brief review of some key results is provided below. 

The main objective of the current contribution is to go beyond the periodic setting of crystals, and to extend a stability analysis to the class of \emph{objective structures}. These structures, introduced by James in \cite{James2006}, constitute a far reaching generalization of lattice systems. They are relevant in a remarkable number of models ranging from biology (to describe, \eg, parts of viruses) to nanoscience (to model carbon nanotubes), see, \eg,  \cite{FalkJames06,DumitricaJames07,Dayal2010,FengPlucinskyJames19}. Their defining feature is that, up to rigid motions of the surrounding space, any two particles are embedded in an identical environment of other particles. (In a lattice, this would be the case even up to translations.) This entails that objective structures are described by orbits of a single reference point under the action of a general discrete group of Euclidean isometries, cf.\ \cite{James2006,Juestel2014}, and allows to resort to ideas in harmonic analysis. However, the symmetry of objective structures in general is considerably more complex than that of a periodic crystal and the adaption of methods and results for lattices has only been achieved in a few cases so far such as, notably, an algorithm for the Kohn-Sham equations for clusters \cite{BanerjeeElliottJames:15} and the X-ray analysis of helical structures in \cite{FrieseckeJamesJuestel16}.

Our endeavor is to provide an in-depth study of stability properties of general objective structures. As detailed below, we will derive explicit formulae for stability constants which are not only interesting from a theoretical point of view but directly lead to an efficient computational algorithm for these quantities. The strength of our method is demonstrated by showing stability of a carbon nanotube with non-trivial chirality. Yet, the non-periodicity and possible lower dimensionality of objective structures poses severe analytical changes as compared to crystals. 

For a crystal, in the seminal contribution \cite{Friesecke2002} Friesecke and Theil have shown the validity of the Cauchy-Born rule for small strains in a two-dimensional mass spring model. Their results were then extended to arbitrary dimensions and more general atomistic interactions %by Conti, Dolzmann, M{\"u}ller and Krichheim 
in \cite{Conti2006}. We refer to the survey article \cite{Ericksen2008} for a general review on the Cauchy-Born rule. These results are the first step of various variational discrete-to-continuum Gamma-convergence results for energy functionals on lattices in which the limiting model is identified explicitly in terms of the effective Cauchy-Born stored energy function, see \cite{Schmidt2009} and \cite{Braun2013} for linear and nonlinear elastic bulk systems, respectively, and \cite{Schmidt2006,BraunSchmidt:22} for thin films. 

An alternative approach to establish the Cauchy-Born rule for crystals has been set forth by E and Ming in \cite{E2007} where they show that under suitable atomistic stability assumptions solutions of the equations of continuum elasticity theory on a flat torus and subject to smooth body forces are approximated by associated atomistic equilibrium configurations. These results have been generalized to the whole space and only mild regularity assumptions in \cite{Ortner2013} and to boundary value problems in \cite{Braun2016}. Even the dynamic setting has been considered in \cite{E2007,Ortner2013,Braun2017,BuchbergerSchmidt:24}. 

At the core of all the aforementioned contributions lies a stability condition for lattice systems. This seems to have been analyzed in detail for the first time by Hudson and Ortner in \cite{Hudson2011}. Motivated by their results, an explicitly computable stability constant for lattice systems has been derived in \cite{Braun2016} which permits a direct comparison with the corresponding continuum Legendre-Hadamard continuum stability constant in a long wave-length regime. An extension to multilattices has been considered in \cite{OlsonOrtner:17}. The main technical tool which allows for an efficient analysis of discrete and continuum stability constants and their interrelation is the Fourier transform. Due to the periodicity of the underlying lattice, the atomistic stability can be determined on a diverging series of finite boxes with periodic boundary conditions, \ie, larger and larger tori, invading the whole space to define a stability constant for the infinite particle system eventually in the limit.  

A related question has been considered already in the classical contribution \cite{Wigner:30} for the opposite special case of finite systems: Stability assumed, it is shown how the eigenfrequencies of a system of finitely many regularly arranged mass points can be overviewed by considering the group of their covering operations and its representations. 

At this point it becomes apparent why the extension to general objective structures is challenging. First, these structures need not be periodic. While their group theoretic description allows for the definition of a Fourier transform, it is a priori not clear in which sense a large wave-length limit with diverging tori can be performed. Second, these structures may be lower dimensional, both macroscopically and microscopically. This leads to the possibility of buckling modes that might impair stability. A related problem is that novel Korn-type inequalities are necessary in order to control atomistic strain measures in terms of configurational energy expressions. 

In our recent contribution \cite{SchmidtSteinbach:21a} we have provided an efficient and extensive description of the dual space of a general discrete group of Euclidean isometries in terms of a finite union of `wave vector domains' which can be related to a specific `translation type' subgroup of finite index. This subgroup in turn allows to define a notion of periodicity and to construct seminorms that measure the local difference of discrete gradients to the set of rigid motions. These seminorms and, in particular, Korn type inequalities for objective structures in terms of these seminorms are studied in detail in our companion paper \cite{SchmidtSteinbach:21b}. 

The main goal of the present contribution is, departing from \cite{SchmidtSteinbach:21a,SchmidtSteinbach:21b} to obtain a general stability criterion for objective structures which identifies an appropriate stability constant and which leads to a directly implementable numerical algorithm. We achieve this in Theorem~\ref{Theorem:LambdaIrreducible}, which gives an abstract representation result in Fourier space, and Theorem~\ref{Theorem:LambdaInduced}, which provides an explicit formula in terms of the above mentioned wave vector domains.  It turns out interesting to consider two stability constants $\lambdaa$ and $\lambdanewa$ which measure the stability against displacements from the set of rigid motions in two different seminorms $\norm\fdot_{\RR}$ and $\newnorm\fdot{\RR}$. While in the important special cases of finite structures and of space filling structures these seminorms are equivalent, this is not the case for lower dimensional infinite systems. There the distinction between these seminorms allows for a fine analysis of systems at the onset of instabilities, that may be caused by buckling modes. 

As our stability constants are lower bounds on the Hessian operator of the configurational energy, it is worth noticing that we also have results on matching upper bounds: Strong bounds with respect to $\norm\fdot_{\RR}$ are provided in Theorem~\ref{Theorem:BoundedThree} for structures with equilibrized onsite potentials and in Theorem~\ref{Theorem:FiniteSpace} for finite and for space filling structures; bounds with respect to $\newnorm\fdot{\RR}$ are established for general structures in Theorem~\ref{Theorem:dtwodtwo} whose proof turns out rather demanding. 

The algorithm resulting from Theorem~\ref{Theorem:LambdaInduced} is spelled out as Algorithm~\ref{Algorithm:algorithm}. The power of our approach is demonstrated by applying our scheme to the physically most relevant example of a carbon nanotube \cite{Iijima1991,Dresselhaus1992}. These structures have attracted an immense attention in the literature due to their extraordinary mechanical and electronic properties \cite{Treacy1996,Tuukkanen2014,Charlier1998,Cao2007}. A carbon nanotube can be visualized by rolling up a portion of a regular hexagonal lattice along a lattice vector, so that a long hollow cylinder emerges on the surface of which the atoms are bonded in a seamless way. Except for special choices of the winding direction the nanotube will possess a non-trivial chirality. 

In spite of the tremendous boom in the physical and material science literature, rigorous analytical results are comparatively scarce and primarily focused on continuum models such as \cite{Arroyo2005,Bajaj2013,Favata2012}. Notable exceptions are \cite{ElKass2014}, where under the assumption of stability a discrete Saint-Venant principle is established for a general class of nanotubes, and the recent contribution \cite{Friedrich2019}. In \cite{Friedrich2019}, which appears to provide the farthest reaching results on carbon nanotubes to date, the Cauchy-Born rule is established rigorously for an atomistic model for stretched tubes under the assumption that the tubes be achiral. In fact, for tubes without chirality the authors show that the stability of the cell problem can be upscaled to the whole structure. 

Our harmonic analysis based scheme in fact also applies to general carbon nanotubes with non-trivial chirality. By way of example we explicitly apply our algorithm to a so-called $(5,1)$ nanotube and a relaxed version thereof. We investigate the stability both for stretched and natural reference configurations, see Example~\ref{Example:Nanotube}. While in the stretched regime both stability constants $\lambdaa$ and $\lambdanewa$ are indeed positive, we see that at the onset of buckling $\lambdanewa$ vanishes while the weaker constant $\lambdaa$ still remains positive. 

%-------------------------------------------------------------------
\subsection*{Acknowledgements}
This work was partially supported by project 285722765 of the Deutsche
Forschungsgemeinschaft (DFG, German Research Foundation). 

%-------------------------------------------------------------------
\subsection*{Data Availability Statement}
Data sharing is not applicable to this article as no datasets were generated or analyzed during the current study. 

\section{Objective structures}\label{section:objective-structures}

In this section we set the stage by collecting known results on the group theoretic description of objective structures and the quantitative analysis of their deformations in terms of suitable seminorms. In particular, we state results from \cite{SchmidtSteinbach:21a,SchmidtSteinbach:21b} which are of significant relevance for our analysis. Unless explicitly stated otherwise, proofs for those statements in this section that go beyond standard results can all be found in
\cite{SchmidtSteinbach:21a,SchmidtSteinbach:21b}.

%-------------------------------------------------------------------
%-------------------------------------------------------------------
\subsection{Discrete subgroups of the Euclidean group and their orbits}%\label{subsection:structure}
%-------------------------------------------------------------------

The \emph{Euclidean group} $\E(d)$ in dimension $d\in\N$ is the set of all Euclidean distance preserving transformations of $\R^d$ into itself, their elements are called \emph{Euclidean isometries}. It may be described as $\E(d)=\O(d)\ltimes \R^d$, the outer semidirect product of $\R^d$ and the orthogonal group $\O(d)$ in dimension $d$ with group operation given by
\[\iso{A_1}{b_1}\iso{A_2}{b_2}=\iso{A_1A_2}{b_1+A_1b_2}\]
for $\iso{A_1}{b_1},\iso{A_1}{b_2}\in\E(d)$. We call $\rot(\iso Ab):=A$ the \emph{linear component} and $\trans(\iso Ab):=b$ the \emph{translation component} of $\iso Ab$ so that $g=\iso{I_d}{\trans(g)}\iso{\rot(g)}0$ for each $g\in\E(d)$. 
An Euclidean isometry $\iso Ab$ is called a \emph{translation} if $A=I_d$. The set $\{I_d\}\ltimes\R^d$ of translations forms an abelian subgroup of $\E(d)$. 
$\E(d)$ acts on $\R^d$ via 
\[\iso Ab\gdot x := Ax+b\qquad\text{for all }\iso Ab\in \E(d)\text{ and }x\in\R^d.\]
For a group $\G<\E(d)$ the \emph{orbit} of a point $x\in\R^d$ under the action of the group is 
\[\G\gdot x:=\set{g\gdot x}{g\in\G}.\]
In the following we will consider \emph{discrete subgroups} of the Euclidean group, which are those $\G<\E(d)$ for which every orbit $\G\gdot x$, $x\in\R^d$, is discrete.

Particular examples are the so-called \emph{space groups}. These are those discrete groups $\G < \E(d)$ that contain $d$ translations whose translation components are linearly independent. Their subgroup of translations is generated by $d$ such linearly independent translations and forms a normal subgroup of $\G$ which is isomorphic to $\Z^d$. 

In general, discrete subgroups of $\E(d)$ can be characterized as follows. (Also cp.\ \cite[A.4 Theorem 2]{Brown1978}.) Recall that two subgroups $\G_1,\G_2<\E(d)$ are \emph{conjugate} in $\E(d)$ if there exists some $g\in\E(d)$ such that $g^{-1}\G_1 g=\G_2$. (This corresponds to a rigid coordinate transformation in $\R^d$.) 
\begin{Theorem}%\label{Theorem:Browndecomposablediscretegroup}
Let $\G<\E(d)$ be discrete, $d\in\N$. There exist $d_1,d_2\in\N_0$ such that $d=d_1+d_2$, a $d_2$-dimensional space group $\SG$ and a discrete group $\G'<\gplus{\O(d_1)}{\SG}$ such that $\G$ is conjugate under $\E(d)$ to $\G'$ and $\pi(\G')=\SG$, where $\pi$ is the natural epimorphism $\gplus{\O(d_1)}{\E(d_2)}\to\E(d_2)$, $\gplus Ag\mapsto g$.
\end{Theorem}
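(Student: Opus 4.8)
The plan is to extract from $\G$ an orthogonal, $\G$-invariant splitting $\R^d = V^{\perp}\oplus V$ in which $V$ carries a full space group and $V^{\perp}$ an auxiliary orthogonal factor, and then to translate the origin so that the action on $V^{\perp}$ becomes purely linear. I would first record the two canonical invariants attached to $\G$. Let $L := \G\cap\Trans(d)$ be the subgroup of pure translations. Conjugating $\iso{I_d}{v}$ by $g\in\G$ gives $\iso{I_d}{\rot(g)v}$, so $L$ is normal in $\G$; and since $L\gdot 0\subseteq\G\gdot 0$ is discrete, $L$ is a lattice spanning a subspace $V_L$ that is invariant under the point group $P := \rot(\G)$. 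Passing to the closure $K := \overline{P}\le\O(d)$, which is compact, $V_L$ stays $K$-invariant by continuity.

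The heart of the matter is to enlarge $V_L$ to the correct subspace $V$. The subtlety is that an infinite-order screw motion --- a rotation by an irrational angle composed with a translation along its axis --- produces an unbounded orbit and so must contribute its axis direction to $V$, even though it generates no pure translation and is therefore invisible to $L$. I would accordingly let $V^{\perp}$ be the largest $K$-invariant subspace on which the projected action of $\G$ has bounded orbits (the sum of the $K$-irreducible pieces on which the translational parts stay bounded), set $V:=(V^{\perp})^{\perp}$, and prove that $V\supseteq V_L$, so that $\R^d=V^{\perp}\oplus V$ is an orthogonal $\G$-invariant decomposition; put $d_1:=\dim V^{\perp}$ and $d_2:=\dim V$. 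I expect this to be the main obstacle: showing that $V^{\perp}$ is well defined and $K$-invariant, and that on its complement the orbit genuinely grows in a lattice-like manner, is a Bieberbach--Zassenhaus type argument (extracting translations and screw-translations from unbounded discrete orbits while controlling how the point group acts on them).

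Granting the splitting, the remaining steps are comparatively routine. On $V^{\perp}$ the projected action has, by construction, a bounded orbit; since it acts by isometries, the center of its unique smallest enclosing ball is fixed by the whole group. Conjugating $\G$ by the translation that carries this center to the origin, I may assume that the $V^{\perp}$-component of $\trans(g)$ vanishes for every $g$, so that each $g$ acts on $V^{\perp}$ through the pure orthogonal map $A_1(g):=\rot(g)|_{V^{\perp}}\in\O(d_1)$.

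Finally, projecting the action onto $V$ yields a homomorphism $\phi\colon\G\to\E(d_2)$, $g\mapsto\iso{\rot(g)|_V}{\Pi_V\trans(g)}$, where $\Pi_V$ is the orthogonal projection onto $V$. I would show that its image $\SG:=\phi(\G)$ is a $d_2$-dimensional space group: discreteness is inherited from that of $\G$ once the $V^{\perp}$-part of the action is bounded, and the defining requirement that $\SG$ contain $d_2$ linearly independent translations is exactly the lattice-like growth built into the construction of $V$. Since $V$ and $V^{\perp}$ are complementary and $\G$-invariant, the assignment $g\mapsto\gplus{A_1(g)}{\phi(g)}$ is an injective homomorphism $\G\to\gplus{\O(d_1)}{\E(d_2)}$ whose image $\G'<\gplus{\O(d_1)}{\SG}$ is discrete and satisfies $\pi(\G')=\SG$. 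As every modification used was a conjugation or an orthogonal change of coordinates in $\E(d)$, the original group $\G$ is conjugate under $\E(d)$ to $\G'$, which is the assertion.
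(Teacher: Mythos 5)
The paper itself does not prove this theorem; it is quoted from the companion work \cite{SchmidtSteinbach:21a}, so I compare your outline against the argument that is actually needed there. Your overall strategy --- split $\R^d$ orthogonally into a $\G$-invariant part $V^{\perp}$ on which the induced action has bounded orbits and its complement $V$, kill the $V^{\perp}$-components of the translations by conjugating with the translation to the circumcenter of a bounded orbit, and then read off $\SG$ as the image of the projection $\phi\colon\G\to\E(d_2)$ --- has the right shape, and you correctly identify the phenomenon (irrational screw motions) that makes $V$ strictly larger than the span of the pure translations $L=\G\cap\Trans(d)$. Several steps you leave implicit do go through: the sum of two invariant subspaces with bounded projected translation parts again has bounded projected translation parts (an equivalence-of-norms argument on $W_1+W_2$), so a unique maximal such subspace exists; its $\rot(\G)$-invariance follows from the conjugation identity $\rot(h)^{-1}\trans(g)=\trans(h^{-1}gh)+\rot(h)^{-1}(I_d-\rot(g))\trans(h)$; and discreteness of $\SG$ follows because the $V^{\perp}$-components of any $\G$-orbit stay in a compact set while $\G$ acts properly discontinuously.

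The genuine gap is the step you yourself flag and then defer: proving that $\SG=\phi(\G)$ contains $d_2$ linearly independent translations, i.e.\ that it is a space group and not merely a discrete subgroup of $\E(d_2)$ whose translational parts are unbounded in every invariant direction. This is not a routine consequence of your construction of $V$: ``unbounded projected translations on every invariant subspace'' does not formally produce pure translations in $\phi(\G)$; one has to manufacture them, typically by the Bieberbach--Frobenius commutator mechanism (elements whose rotation parts are close to the identity have commutators with much smaller rotation parts, discreteness then forces rotation parts near the identity to act trivially on $V$, and only then does unboundedness upgrade to a full-rank lattice of translations). Everything else in your proposal is soft by comparison; this step is the actual mathematical content of the theorem, and as written you assume it rather than prove it. A secondary, fixable imprecision: ``the sum of the $K$-irreducible pieces on which the translational parts stay bounded'' is not well defined, since the decomposition of an isotypic component into irreducibles is not unique; you should rely only on your other characterization, the unique maximal $K$-invariant subspace with bounded projected translations.
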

Here $\gplus{}{}$ is the group homomorphism $\gplus{}{}\colon\O(d_1)\times\E(d_2)\to\E(d_1+d_2)$ with 
\begin{align*}
(A_1,\iso{A_2}{b_2})\mapsto \gplus{A_1}{\iso{A_2}{b_2}}:=\iso[\bigg]{\parens[\bigg]{\begin{matrix}A_1&0\\0&A_2\end{matrix}}}{\parens[\bigg]{\begin{matrix}0\\b_2\end{matrix}}} 
\end{align*}
and $\gplus{\O(d_1)}{\SG}$ is understood to be $\O(d)$ if $d_1=d$ and to be $\SG$ if $d_1=0$. The theorem allows us to assume that $\G$ is of the form $\G'$ which we will do henceforth with no loss of generality.  

Such a group $\G$ can be efficiently described in terms of the space group $\SG$, the kernel $\F$ of $\pi|_\G$ and a section $\T \subset \G$ of the translation group $\T_\SG$ of $\SG$, \ie, a set $\T\subset\G$ such that the map $\T\to\T_\SG$, $g\mapsto \pi(g)$ is bijective. 
The group $\T\F=\pi|_\G^{-1}(\T_\SG)$ is a normal subgroup of $\G$ of finite index. 
We remark that the quantities $d$, $d_1$, $d_2$, $\F$, $\SG$ and $\T_\SG$ are uniquely defined by $\G$. However, in general there is no canonical choice for $\T$, it might not a be group and the elements of $\T$ might not commute. Yet, in \cite{SchmidtSteinbach:21a} it is shown that there is an $m_0\in\N$ such that $\T^N = \set{t^N}{t\in\T}$ is a normal subgroup of $\G$ if and only if $N$ is a multiple of $m_0$: 
\[ \T^N \triangleleft \G \iff N \in \MM := m_0 \N. \] 
For each $N \in \MM$, $\T^N$ is isomorphic to $\Z^{d_2}$ and of finite index in $\G$. 

The set $\T$ allows to introduce a notion of periodicity for functions defined on $\G$. For a set $S$ and $N\in \MM$ we say that a function $u\colon \G \to S$ is \emph{$\T^N$-periodic} if
\[u(g) = u(gt)\qquad\text{for all }g\in\G\text{ and }t\in\T^N.\]
It is called \emph{periodic} if there exists some $N\in \MM$ such that $u$ is $\T^N$-periodic. We also set
\[\Per(\G,\C^{m\times n}):=\set{u\colon\G\to\C^{m\times n}}{u\text{ is periodic}}.\]
(Throughout $\C^{m\times n}$ is equipped with the usual Frobenius inner product and induced norm $\norm\fdot$.) We notice that the above definition of periodicity is independent of the choice of $\T$ and that $\Per(\G,\C^{m\times n})$ is a vector space. In fact, one has 
\[\Per(\G,\C^{m\times n}) = \set[\Big]{\G\to\C^{m\times n},g\mapsto u(g\T^N)}{N\in \MM, u\colon\G/\T^N\to\C^{m\times n}}.\]
For each $N\in \MM$ we now fix a representation set $\CC_N$ of $\G/\T^N$ and we equip $\Per(\G,\C^{m\times n})$ with the inner product $\angles{\fdot,\fdot}$ given by
\[\angles{u,v}:=\frac1{\abs{\CC_N}}\sum_{g\in\CC_N}\angles{u(g),v(g)}\qquad\text{if $u$ and $v$ are $\T^N$-periodic}\]
for all $u,v\in\Per(\G,\C^{m\times n})$.
The induced norm is denoted by $\norm\fdot_2$.

Following James \cite{James2006} we define an \emph{objective (atomic) structure} as a discrete point set $S$ in $\R^d$ such that for any two elements $x_1, x_2 \in S$ there is an isometry $g \in \E(d)$ with $g\cdot S = S$ and $g\cdot x_1 = x_2$. An equivalent characterization is that $S$ be an orbit of a point under the action of a discrete subgroup of $\E(d)$, see, \eg, \cite[Proposition 3.14]{Juestel2014}.

The following elementary observation is proved in \cite[Lemmas~2.12--2.14]{SchmidtSteinbach:21b}. It shows that by changing coordinates we may without loss of generality assume that objective structures $\G \cdot x_0$ lie in $\{0_{d-\daff}\}\times\R^\daff$, where $\daff$ is their affine dimension, and that $\G$ acts trivially on $\R^{d-\daff}\times\{0_{\daff}\}$. We denote by $\aff(A)$ the \emph{affine hull} of a set $A\subset\R^d$ and by $\dim(A):=\dim(\aff (A))$ its \emph{affine dimension}. 

\begin{Lemma}\label{Lemma:WLOGdOStwo}
Let $\G<\E(d)$ be discrete, $x_0\in\R^d$ and $\daff=\dim(\G\cdot x_0)$. 
There exist some $a\in\E(d)$ and a discrete group $\G'<\gplus{\{I_{d-\daff}\}}{\E(\daff)}$ such that with $x_0'=a\cdot x_0$ one has $\G'\cdot x_0'=a\cdot(\G\cdot x_0)$ and 
\[\aff(\G'\cdot x_0')=\{0_{d-\daff}\}\times\R^\daff.\]
\end{Lemma}
\subsubsection*{Fourier analysis}

We denote by $\dual{\G}$ the \emph{dual space} of $\G$, which consists of all equivalence classes of irreducible (unitary) representations of the group $\G$. (Recall that a 
(unitary) representation $\rho$ of dimension $d_\rho$ of $\G$ is a homomorphism from $\G$ to the unitary matrices in $\C^{d_\rho\times d_\rho}$, that two representations $\rho,\rho'$ are said to be equivalent if $d_\rho=d_{\rho'}$ and $T^{\mathsf H}\rho(g)T=\rho'(g)$ for all $g\in\G$ and some unitary $d_\rho\times d_\rho$ matrix $T$ and that $\rho$ is said to be irreducible if the only subspaces of $\C^{d_\rho}$ invariant under $\set{\rho(g)}{g\in\G}$ are $\{0\}$ and $\C^{d_\rho}$.) We further remark that the dimensions $d_\rho$ of irreducible representations $\rho$ of $\G$ are uniformly bounded, cf.\ \cite{Moore1972}. One-dimensional representations will be denoted by the symbol $\chi$ and called \emph{characters}. Their equivalence class is a singleton. If $\G$ is abelian, then every irreducible representation of $\G$ is of dimension one. In particular, $\dual{\T^{m_0}}$ consists of all homomorphisms from $\G$ to the complex unit circle. 

Since $\G$ is almost abelian, the Mackey machine works to describe the dual space $\dual{\G}$, cp.~\cite{Mackey:58,KleppnerLipsman:72,KleppnerLipsman:72b} and also cf.\ \cite{BekkaHarpe20} for a recent account in the present and more general settings. Departing from this, an explicit labeling of representations with wave vectors has been obtained in \cite{SchmidtSteinbach:21a}, which is vital to our stability analysis. 
It is obtained by lifting the characters on $\T_\mathcal{S}$ via $\pi^{-1}$ to $\T\F = \pi^{-1}(\T_\mathcal{S})$ and then considering those representations on $\G$ that are induced by representations of $\T\F$. 
We thus recall here the following definition, cp., \eg, \cite[Section 8.2]{Steinberg2012}. 
\begin{Definition}\label{Definition:IndRep}
Let $\H$ be a subgroup of $\G$ of finite index $n=\abs{\G:\H}$.
Choose a complete set of representatives $\{k_1,\dots,k_n\}$ of the left cosets of $\H$ in $\G$.
If $\rho\colon\H\to\U(d_\rho)$ is a representation of $\H$ and $g\in\G$, we set $\dot\rho(g):=\rho(g)$ if $g\in\H$ and $\dot\rho(g):=0_{d_\rho,d_\rho}$ otherwise. 
The \emph{induced representation} $\Ind_\H^\G\rho\colon\G\to\U(nd_\rho)$ is defined by
\[\Ind_\H^\G\rho(g)=\begin{bmatrix}\dot\rho(k_1^{-1}gk_1)&\cdots&\dot\rho(k_1^{-1}gk_n)\\ \vdots&\ddots&\vdots\\\dot\rho(k_n^{-1}gk_1)&\cdots&\dot\rho(k_n^{-1}gk_n)\end{bmatrix}\qquad\text{for all }g\in\G.\]
The \emph{induced representation} of an equivalence class of representations is the equivalence class of the induced representation of a representative. 
Moreover, let $\Ind_\H^\G(\dual\H)$ denote the set of all induced representations of $\dual\H$.
We also write $\Ind$ instead of $\Ind_\H^\G$ if $\H$ and $\G$ are clear by context.
\end{Definition}

Observe that a representation $\rho$ of $\G$ is $\T^N$-periodic, $N\in \MM$, if and only if $\rho|_{\T^N}=I_{d_\rho}$. We fix a representation set $\EE$ of $\set{\rho\in\dual\G}{\rho\text{ is periodic}}$ and define the Fourier transform as follows. 
\begin{Definition}\label{Definition:FourierPeriodic}
If $u\in\Per(\G,\C^{m\times n})$ and $\rho$ is a periodic representation of $\G$, we set
\[\fourier u(\rho):=\frac1{\abs{\CC_N}}\sum_{g\in\CC_N}u(g)\otimes\rho(g)\in\C^{(md_\rho)\times(nd_\rho)},\]
where $N\in\MM$ is such that $u$ and $\rho$ are $\T^N$-periodic and $\otimes$ denotes the Kronecker product, see \eqref{eq:Kronecker-product}.
\end{Definition}
\begin{Proposition}[The Plancherel formula]\label{Proposition:TFplancherelmatrix} 
The Fourier transformation
\[\fourier\fdot \colon \Per(\G, \C^{m\times n}) \to\bigoplus_{\rho\in\EE} \C^{(md_\rho)\times(nd_\rho)}, \quad u \mapsto (\fourier u(\rho))_{\rho\in\EE}\]
is well-defined and bijective. Moreover, the Plancherel formula
\[\angles{u,v}=\sum_{\rho\in\EE}d_\rho\angles{\fourier u(\rho),\fourier v(\rho)}\qquad\text{for all }u,v\in\Per(\G,\C^{m\times n})\]
holds true. 
\end{Proposition}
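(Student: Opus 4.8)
The plan is to reduce the whole statement to the classical Peter--Weyl and Plancherel theorems for the \emph{finite} quotient groups $\G_N=\G/\T^N$, $N\in\MM$, and then to pass from scalar- to matrix-valued functions by means of the Kronecker product. The underlying identifications are that a $\T^N$-periodic function $u$ is nothing but a function on $\G_N$, that a periodic irreducible representation is $\T^N$-periodic exactly when it descends to an irreducible representation of $\G_N$, and that under these identifications $\fourier u(\rho)$ is the ordinary Fourier coefficient on the finite group $\G_N$. Well-definedness comes first: as $u$ and $\rho$ are $\T^N$-periodic, the matrix $u(g)\otimes\rho(g)$ depends only on the coset $g\T^N$, so $\fourier u(\rho)$ is independent of the representatives $\CC_N$, while independence of the admissible $N$ follows by passing to a common multiple, exactly as for the inner product $\angles{\fdot,\fdot}$.

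To pass to scalars I would expand $u=\sum_{i,j}u_{ij}E_{ij}$ in the matrix units $E_{ij}$ of $\C^{m\times n}$ with scalar components $u_{ij}\colon\G\to\C$. Linearity of the Kronecker product gives $\fourier u(\rho)=\sum_{i,j}E_{ij}\otimes\fourier{u_{ij}}(\rho)$, and the Frobenius orthonormality of the $E_{ij}$ together with the identity $\angles{A\otimes B,C\otimes D}=\angles{A,C}\angles{B,D}$ turns the right-hand side of the asserted formula into $\sum_\rho d_\rho\sum_{i,j}\angles{\fourier{u_{ij}}(\rho),\fourier{v_{ij}}(\rho)}$; since its left-hand side is $\sum_{i,j}\angles{u_{ij},v_{ij}}$, the matrix Plancherel identity follows componentwise from the scalar one on $\G_N$. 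Injectivity is then immediate, since positive definiteness of $\angles{\fdot,\fdot}$ forces $u=0$ once $\fourier u=0$. Surjectivity I would obtain from a dimension count on the space of $\T^N$-periodic $\C^{m\times n}$-valued functions: its complex dimension $mn\abs{\G_N}$ equals $\sum_{\rho\in\dual{\G_N}}(md_\rho)(nd_\rho)$ by virtue of $\sum_\rho d_\rho^2=\abs{\G_N}$, so the injective Fourier map is onto the corresponding block of the target; as every element of $\bigoplus_{\rho\in\EE}$ is finitely supported, it lies in such a block once $N$ is a common multiple of the periods of its nonzero entries.

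The step I expect to be the crux, and the only one genuinely beyond bookkeeping, is to reconcile the sum over \emph{all} periodic $\rho\in\EE$ with the finite sum over $\dual{\G_N}$. This rests on the vanishing claim: if $u$ is $\T^N$-periodic and $\rho\in\EE$ is periodic but \emph{not} $\T^N$-periodic, then $\fourier u(\rho)=0$. It is precisely this that renders $\fourier u$ finitely supported, so that the codomain is legitimately the direct sum, and that lets one replace $\sum_{\rho\in\EE}$ by the finite sum over $\EE_N:=\set{\rho\in\EE}{\rho|_{\T^N}=I_{d_\rho}}$, a representation set of $\dual{\G_N}$. To prove the claim I would pick a common period $N'$, work on $\G_{N'}$, and observe that $K:=\T^N/\T^{N'}$ is a \emph{normal abelian} subgroup of $\G_{N'}$ on whose cosets $u$ is constant; a short averaging computation then yields $\fourier u(\rho)=\fourier u(\rho)\,(I_n\otimes P)$ with $P=\abs{K}^{-1}\sum_{k\in K}\rho(k)$ the orthogonal projection onto the $K$-invariants. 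Since $\rho$ is irreducible and $K$ is abelian and normal, Clifford theory forces $\rho|_K$ to be a single $\G_{N'}$-orbit of characters; this orbit is trivial exactly when $\rho$ is $\T^N$-periodic, so in the remaining case $P=0$ and hence $\fourier u(\rho)=0$, as required.
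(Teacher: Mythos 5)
Your proposal is correct. Note that the paper itself does not prove Proposition~\ref{Proposition:TFplancherelmatrix} but defers it to the companion work \cite{SchmidtSteinbach:21a}; your route --- reduction to the finite quotients $\G_N=\G/\T^N$, the Kronecker/matrix-unit bookkeeping, and the classical Peter--Weyl dimension count --- is the natural one and is consistent with how the paper sets everything up (your ``vanishing claim'' is exactly the remark stated immediately after the proposition). The only comment worth making is that your Clifford-theory step is heavier than needed: since $K=\T^N/\T^{N'}$ is normal in $\G_{N'}$, the $K$-fixed subspace of an irreducible $\rho$ is $\G_{N'}$-invariant and hence $\{0\}$ or everything, which gives $P=0$ for non-$\T^N$-periodic $\rho$ directly.
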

We remark that for all $u\colon\G\to\C^{m\times n}$ and $N\in \MM$ such that $u$ is $\T^N$-periodic, one gets 
\[\set{\rho \in \EE}{\fourier u(\rho) \neq 0} \subset \set{\rho\in\EE}{\rho\text{ is $\T^N$-periodic}}.\]

\begin{Definition}\label{Definition:FourierLOne}
For all $u\in L^1(\G,\C^{m\times n})$ and all representations $\rho$ of $\G$ we define
\[\fourier u(\rho):=\sum_{g\in\G}u(g)\otimes \rho(g).\]
\end{Definition}
Note that if $\G$ is finite and $u\in L^1(\G,\C^{m\times n})=\Per(\G,\C^{m\times n})$, then the Definitions~\ref{Definition:FourierPeriodic} and \ref{Definition:FourierLOne} for $\fourier u$ differ by the multiplicative constant $\abs\G$, but it will always be clear from the context which of the two is meant.
If $\G$ is infinite, there is no ambiguity as then $L^1(\G,\C^{m\times n})\cap\Per(\G,\C^{m\times n})=\{0\}$.
\begin{Lemma}\label{Lemma:Convolution}
Let $u\in L^1(\G,\C^{l\times m})$, $v\in\Per(\G,\C^{m\times n})$ and consider their convolution $u*v\in\Per(\G,\C^{l\times n})$ given by
\[u*v(g):=\sum_{h\in\G}u(h)v(h^{-1}g)\qquad\text{for all }g\in\G.\]
Suppose $\rho$ be a periodic representation of $\G$.
Then
\begin{enumerate}
\item the convolution $u*v$ is $\T^N$-periodic if $v$ is $\T^N$-periodic and
\item we have $\fourier{u*v}(\rho)=\fourier u(\rho)\fourier v(\rho)$.
\end{enumerate}
\end{Lemma}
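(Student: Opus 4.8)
The plan is to treat the two assertions separately: the periodicity in (i) is essentially immediate, while the convolution identity in (ii) rests on the multiplicativity of the Kronecker product together with an invariance-of-the-averaging-set argument. Since $v$ and $\rho$ are both periodic, I would first fix a common $N\in\MM$ for which $v$ and $\rho$ are simultaneously $\T^N$-periodic. Before anything else I would record that the defining sum for $u*v$ converges absolutely: being $\T^N$-periodic, $v$ is bounded by $\max_{g\in\CC_N}\norm{v(g)}$, and $u\in L^1(\G,\C^{l\times m})$ gives $\sum_{h\in\G}\norm{u(h)}<\infty$, so $\sum_{h\in\G}\norm{u(h)v(h^{-1}g)}<\infty$ for every $g\in\G$. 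This makes $u*v$ well defined and will also license the interchange of summations below.

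For (i) I would compute directly. For $g\in\G$ and $t\in\T^N$,
\[(u*v)(gt)=\sum_{h\in\G}u(h)v(h^{-1}gt)=\sum_{h\in\G}u(h)v(h^{-1}g)=(u*v)(g),\]
the middle equality being $\T^N$-periodicity of $v$ applied to the argument $h^{-1}g$. Hence $u*v$ is $\T^N$-periodic, which in particular confirms $u*v\in\Per(\G,\C^{l\times n})$ as asserted in Definition~\ref{Definition:Convolution}.

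For (ii), part (i) ensures $u*v$ is $\T^N$-periodic, so Definition~\ref{Definition:FourierPeriodic} applies and I would expand
\[\fourier{u*v}(\rho)=\frac1{\abs{\CC_N}}\sum_{g\in\CC_N}\Big(\sum_{h\in\G}u(h)v(h^{-1}g)\Big)\otimes\rho(g).\]
Writing $\rho(g)=\rho(h)\rho(h^{-1}g)$ and invoking the mixed-product rule $(AB)\otimes(CD)=(A\otimes C)(B\otimes D)$ turns each summand into $(u(h)\otimes\rho(h))(v(h^{-1}g)\otimes\rho(h^{-1}g))$. Interchanging the finite sum over $\CC_N$ with the absolutely convergent sum over $\G$ and factoring out $u(h)\otimes\rho(h)$ yields
\[\fourier{u*v}(\rho)=\sum_{h\in\G}(u(h)\otimes\rho(h))\,\frac1{\abs{\CC_N}}\sum_{g\in\CC_N}v(h^{-1}g)\otimes\rho(h^{-1}g).\]
The crux is to identify the inner average with $\fourier v(\rho)$ for every fixed $h$: the map $g\mapsto v(g)\otimes\rho(g)$ is $\T^N$-periodic, since for $t\in\T^N$ one has $v(gt)\otimes\rho(gt)=v(g)\otimes\rho(g)\rho(t)=v(g)\otimes\rho(g)$ using $\T^N$-periodicity of $v$ and $\rho|_{\T^N}=I_{d_\rho}$. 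Hence the average is independent of the chosen representation set of $\G/\T^N$; as $\T^N\triangleleft\G$, left translation by $h^{-1}$ permutes the cosets, so $h^{-1}\CC_N$ is again a representation set and re-indexing via $g'=h^{-1}g$ returns exactly $\fourier v(\rho)$. Substituting back and recalling Definition~\ref{Definition:FourierLOne} gives
\[\fourier{u*v}(\rho)=\Big(\sum_{h\in\G}u(h)\otimes\rho(h)\Big)\fourier v(\rho)=\fourier u(\rho)\fourier v(\rho).\]

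I expect the main obstacle to be this last identification: one must verify that $h^{-1}\CC_N$ is a legitimate averaging set (using normality of $\T^N$) and that the summand is genuinely $\T^N$-periodic in $g$, which crucially requires periodicity of both $v$ \emph{and} $\rho$, the latter through $\rho|_{\T^N}=I_{d_\rho}$. By contrast, the interchange of summations is routine once the absolute convergence furnished by $u\in L^1$ is in place.
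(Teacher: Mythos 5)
Your proof is correct. The paper itself does not reprove this lemma (it is quoted from the companion paper \cite{SchmidtSteinbach:21a}), but your argument is the standard one: absolute convergence from $u\in L^1$ and boundedness of the periodic $v$, the mixed-product rule $\rho(g)=\rho(h)\rho(h^{-1}g)$ inside the Kronecker product, and the observation that $g\mapsto v(g)\otimes\rho(g)$ is $\T^N$-periodic so that its average is unchanged under replacing $\CC_N$ by $h^{-1}\CC_N$ — all the genuinely load-bearing steps are present and justified.
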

%-------------------------------------------------------------------
\subsubsection*{Wave vector characterization of the dual space}
An essential feature of our stability results in Theorem~\ref{Theorem:LambdaInduced} and the induced Algorithm~\ref{Algorithm:algorithm} below is a computationally amenable characterization of the representation set $\EE$. To this end we recall from \cite{SchmidtSteinbach:21a} that $\dual\G$ is effectively described in terms of a finite set $R$ of representations of $\T\F$ and corresponding wave vector domains $K_\rho\subset\R^{d_2}$, $\rho\in R$, which are the fundamental domains of certain space groups $\G_\rho$ acting on $\R^{d_2}$. The remaining part of this section shows how to construct $R$ and $\G_\rho$, $\rho\in R$. (Readers might prefer to postpone reading this until it is needed in Theorem~\ref{Theorem:LambdaInduced} and Algorithm~\ref{Algorithm:algorithm}.)

We define an equivalence relation $\simrg$ on $\dual{\T\F}$. As $\T\F$ is a normal subgroup of $\G$, $\G$ acts on the set of irreducible representations of $\T\F$ via $g \cdot \rho (h) = \rho(g^{-1} h g)$ for all $h \in \T\F$ for any $g\in\G$ and irreducible representation $\rho$ of $\T\F$. This induces an action of $\G$ on $\dual{\T\F}$. The characters of $\T\F$ act on $\dual{\T\F}$ by multiplication. 
\begin{Definition}\label{Definition:ChiK-L}
\begin{enumerate}
\item\label{Definition:ChiK} For all $k\in\R^{d_2}$ we define the character $\chi_k\in\dual{\T\F}$ by
\[\chi_k(g):=\exp(2\pi\iu\scalar k{\trans(\pi(g))})\qquad \text{for all }g\in\T\F,\]
where $\pi\colon\T\F\to\T_\SG$ is the natural epimorphism.
\item\label{Definition:LLst}
$\LL<\R^{d_2}$ is the lattice of translational components of $\T_\SG$ and $\dualL$ its dual lattice: 
\begin{align*}
\LL:=\trans(\T_\SG), \qquad 
\dualL:=\set{x\in\R^{d_2}}{\scalar xy\in\Z\text{ for all }y\in \LL}.
\end{align*}
\item\label{Definition:RelationTF}
We define the relation $\simrg$ on $\dual{\T\F}$ by
\[(\rho\simrg\rho')\;:\Longleftrightarrow\;(\exists\,g\in\G\,\exists\,k\in\R^{d_2} : g\cdot\rho=\chi_k\rho').\]
\end{enumerate}
\end{Definition}
\begin{Remark}\label{Remark:VNK}
One has $\dualL/n = \set{k\in\R^{d_2}}{\chi_k|_{\T^n}=1}$ for all $n\in\N$. 
\end{Remark}
The following provides an algorithm for the determination of a representation set $R$ of $\dual{\T\F}/{\simrg}$.
\begin{Lemma}\label{Lemma:RepSys}
\begin{enumerate}
\item\label{item:aaa} 
Every representation set of $\set{\rho\in\dual{\T\F}}{\rho|_{\T^{m_0}}=I_{d_\rho}}/{\simrg}$ is a representation set of $\dual{\T\F}/{\simrg}$.
\item%\label{item:bbb} 
The map
\[\dual{(\T\F)_{m_0}}\to\set{\rho\in\dual{\T\F}}{\rho|_{\T^{m_0}}=I_{d_\rho}},\quad\rho\mapsto\rho\circ\pi\]
where $\pi\colon\T\F\to(\T\F)_{m_0}$ is the natural epimorphism, is bijective.
In particular, the set $\set{\rho\in\dual{\T\F}}{\rho|_{\T^{m_0}}=I_{d_\rho}}$ is finite.
\item%\label{item:ccc} 
Let $K$ be a representation set of $(\dualL/{m_0})/\dualL$ and $\P$ be a representation set of $\G/(\T\F)$.
Then, for all $\rho,\rho'\in\set{\tilde\rho\in\dual{\T\F}}{\tilde\rho|_{\T^{m_0}}=I_{d_{\tilde\rho}}}$ it holds
\[(\rho\simrg\rho')\iff(\exists\,g\in\P\,\exists\,k\in K: g\cdot\rho=\chi_k\rho').\]
\end{enumerate} 
\end{Lemma}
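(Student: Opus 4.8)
The plan is to handle the three assertions in turn, after isolating the one structural fact that underlies all of them: the subgroup $\T^{m_0}$ is central in $\T\F$. Indeed, for $t\in\T^{m_0}$ and $g\in\T\F$ normality of $\T^{m_0}$ in $\G$ gives $g^{-1}tg\in\T^{m_0}$, while commutativity of $\T_\SG$ gives $\pi(g^{-1}tg)=\pi(t)$; since $\T^{m_0}\cong\Z^{d_2}$ is torsion-free and $\ker(\pi|_{\T\F})=\F$ is finite (being a discrete, hence finite, subgroup of $\O(d)$), the map $\pi|_{\T^{m_0}}$ is injective and therefore $g^{-1}tg=t$. By Schur's lemma every irreducible $\rho$ then restricts on $\T^{m_0}$ to a scalar representation, $\rho(t)=\chi_\rho(t)\,I_{d_\rho}$ for a character $\chi_\rho$ of $\T^{m_0}$; this scalar form is what drives the reductions below. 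For assertion (ii) I would invoke the universal property of the quotient: a representation of $\T\F$ is trivial on $\T^{m_0}$ precisely when it factors through $(\T\F)_{m_0}=\T\F/\T^{m_0}$, and pullback along $\pi$ preserves irreducibility and unitary equivalence, which is exactly the stated bijection. Finiteness is then automatic, since $\T^{m_0}$ has finite index in $\G$, hence in $\T\F$, so $(\T\F)_{m_0}$ is a finite group with only finitely many inequivalent irreducible representations.

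For assertion (i) the whole content is that every $\simrg$-class of $\dual{\T\F}$ contains a representative trivial on $\T^{m_0}$. Granting this, if $R$ is a representation set of the restricted relation on $D:=\set{\rho\in\dual{\T\F}}{\rho|_{\T^{m_0}}=I_{d_\rho}}$, then $R$ meets each full class (through the representative lying in $D$) and meets it only once (two elements of $R$ in one full class lie in a single class of the restricted relation, hence coincide), which is precisely the claim. To produce the representative I use the scalar form: for $\rho$ with central character $\chi_\rho$ I seek $k\in\R^{d_2}$ with $\chi_k|_{\T^{m_0}}=\overline{\chi_\rho}$, so that $\chi_k\rho$ is trivial on $\T^{m_0}$ and $\rho\simrg\chi_k\rho$ (with $g=\id$ and wave vector $-k$). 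Such a $k$ exists because $t\mapsto\trans(\pi(t))$ identifies $\T^{m_0}$ with the full-rank lattice $m_0\LL$, and every character of a full lattice in $\R^{d_2}$ is the restriction of $\exp(2\pi\iu\scalar{k}{\fdot})$ for a suitable $k$ (Pontryagin duality).

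For assertion (iii) the implication from right to left is immediate, since $\P\subset\G$ and $K\subset\R^{d_2}$. For the converse, suppose $\tilde g\cdot\rho=\chi_{\tilde k}\rho'$ with $\tilde g\in\G$ and $\tilde k\in\R^{d_2}$, and reduce the two parameters separately. Writing $\tilde g=gh$ with $g\in\P$ and $h\in\T\F$, conjugation by $h$ is inner in $\T\F$ and hence acts trivially on $\dual{\T\F}$, so $g\cdot\rho=\tilde g\cdot\rho=\chi_{\tilde k}\rho'$. Restricting this identity to $\T^{m_0}$, the left side is trivial (as $\T^{m_0}\triangleleft\G$ and $\rho|_{\T^{m_0}}=I_{d_\rho}$) while the right side equals the scalar $\chi_{\tilde k}(t)\,I_{d_{\rho'}}$ (as $\rho'|_{\T^{m_0}}=I_{d_{\rho'}}$); comparing central characters forces $\chi_{\tilde k}|_{\T^{m_0}}=1$, so $\tilde k\in\dualL/m_0$ by Lemma~\ref{Lemma:VNK}. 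Finally, choosing $k\in K$ with $\tilde k-k\in\dualL$ leaves $\chi_{\tilde k}=\chi_k$ unchanged on $\T\F$, and we obtain $g\cdot\rho=\chi_k\rho'$ with $g\in\P$, $k\in K$.

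The steps I expect to be most delicate are not the computations but the bookkeeping at the level of equivalence classes: one must check that both the $\G$-action $g\cdot\rho$ and the multiplication $\chi_k\rho$ descend to well-defined operations on $\dual{\T\F}$, that this action is a genuine left action so that $(gh)\cdot\rho=g\cdot(h\cdot\rho)$, and that the phrase ``inner automorphisms act trivially'' really is the identity $h\cdot\rho=\rho$ in $\dual{\T\F}$ for $h\in\T\F$. Everything else rests on the centrality of $\T^{m_0}$ together with Lemma~\ref{Lemma:VNK}, so verifying centrality --- in particular the injectivity of $\pi|_{\T^{m_0}}$, which uses finiteness of $\F$ --- is the one preliminary I would want to nail down first.
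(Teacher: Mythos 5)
This lemma is quoted in the paper from the companion work \cite{SchmidtSteinbach:21a} and no proof is given here, so there is no in-paper argument to compare against; judged on its own, your proof is correct and complete in outline. The one fact everything hinges on --- centrality of $\T^{m_0}$ in $\T\F$ --- you establish properly: $\T^{m_0}\cap\F$ is a finite subgroup of the torsion-free group $\T^{m_0}\cong\Z^{d_2}$, hence trivial, so $\pi|_{\T^{m_0}}$ is injective and normality plus commutativity of $\T_\SG$ forces $g^{-1}tg=t$. From there Schur's lemma gives the central character, Pontryagin duality for the full-rank lattice $m_0\LL$ produces the twisting wave vector for (i), the universal property of the quotient gives (ii), and in (iii) the reduction of $\tilde g$ modulo $\T\F$ (inner automorphisms acting trivially on $\dual{\T\F}$) together with Lemma~\ref{Lemma:VNK} and reduction of $\tilde k$ modulo $\dualL$ is exactly the right bookkeeping. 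I see no gap.
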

In particular, the set $\dual{\T\F}/{\simrg}$ is finite. We now associate a special space group acting on $\R^{d_2}$ to any $\rho\in\dual{\T\F}$. 
\begin{Definition}\label{Definition:Grho}
For all $\rho\in\dual{\T\F}$ we define the set
\[\G_\rho:=\set[\Big]{\iso{\rot(\pi(g))}k}{g\in\G,k\in\R^{d_2}:g\cdot\rho=\chi_k\rho}\subset\E(d_2),\]
where $\pi\colon\G\to\SG$ is the natural epimorphism. We also set $\G_{\rho'}:=\G_\rho$ if $\rho'\in\rho$. 
\end{Definition}
\begin{Proposition}\label{Proposition:SpaceGroupLattice}
For each $\rho\in\dual{\T\F}$ the set $\G_\rho$ is a $d_2$-dimensional space group. It holds
\[\dualL\le\set[\big]{k\in\R^{d_2}}{\iso{I_{d_2}}k\in\G_\rho}\le\dualL/m_0.\]
Moreover, if $\rho|_{\T^N}=I_{d_\rho}$ with $N\in \MM$, then the set $\dualL/N$ is invariant under $\G_\rho$. 
\end{Proposition}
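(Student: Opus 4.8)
The plan is to establish in turn that $\G_\rho$ is a subgroup of $\E(d_2)$, that its group of translations is squeezed between $\dualL$ and $\dualL/m_0$, and then to read off the space group property and the invariance statement. The computational backbone is a twisting identity: for $g\in\G$ and $h\in\T\F$ a short calculation in $\SG$ gives $\trans(\pi(g^{-1}hg))=\rot(\pi(g))^{-1}\trans(\pi(h))$, so (using that $\rot(\pi(g))$ is orthogonal) $\chi_k(g^{-1}hg)=\chi_{\rot(\pi(g))k}(h)$ and hence $g\cdot(\chi_k\rho)=\chi_{\rot(\pi(g))k}\,(g\cdot\rho)$ for every $k$. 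Applying this to the defining relation of Definition~\ref{Definition:Grho}, if $\iso{\rot(\pi(g_1))}{k_1},\iso{\rot(\pi(g_2))}{k_2}\in\G_\rho$ then $(g_1g_2)\cdot\rho=\chi_{k_1+\rot(\pi(g_1))k_2}\rho$, and $k_1+\rot(\pi(g_1))k_2$ is precisely the translation part of the $\E(d_2)$-product $\iso{\rot(\pi(g_1))}{k_1}\iso{\rot(\pi(g_2))}{k_2}$; the same identity handles inverses, and $g=\id$, $k=0$ gives the identity. Hence $\G_\rho$ is a subgroup, with $\rot(\G_\rho)$ contained in the finite point group $\rot(\SG)$ of $\SG$.

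Next I would pin down the translation subgroup $\set{k}{\iso{I_{d_2}}k\in\G_\rho}$. The lower inclusion comes from taking $g=\id$: then $\iso{I_{d_2}}k\in\G_\rho$ as soon as $\chi_k\rho=\rho$, i.e. $\chi_k\equiv1$ on $\T\F$, which by Definition~\ref{Definition:ChiK} and \eqref{eq:LatticeSpaceGroup} is exactly $k\in\dualL$. For the upper inclusion, let $\iso{I_{d_2}}k\in\G_\rho$ be witnessed by $g\in\G$ with $\rot(\pi(g))=I_{d_2}$ and $g\cdot\rho=\chi_k\rho$. Then $\pi(g)$ is a pure translation, so it commutes with $\pi(s)$ for every $s\in\T^{m_0}$, giving $\pi(g^{-1}sg)=\pi(s)$ and hence $g^{-1}sg\,s^{-1}\in\T^{m_0}\cap\F$ (using $\T^{m_0}\triangleleft\G$). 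Since $\pi$ is injective on $\T^{m_0}$ — as $\T^{m_0}\cong\Z^{d_2}$ surjects onto the rank $d_2$ sublattice of $\T_\SG$ with translation components $m_0\LL$ — this intersection is trivial, so $g$ centralizes $\T^{m_0}$. Evaluating $g\cdot\rho=\chi_k\rho$ on $s\in\T^{m_0}$ then forces $\chi_k(s)=1$, i.e. $\chi_k|_{\T^{m_0}}=1$, which by Lemma~\ref{Lemma:VNK} means $k\in\dualL/m_0$. Trapped between the full rank lattices $\dualL$ and $\dualL/m_0$, the translation subgroup is a rank $d_2$ lattice; since $\rot(\G_\rho)$ is finite, every $\G_\rho$-orbit is a finite union of translates of this lattice and hence discrete, so $\G_\rho$ is a discrete group containing $d_2$ linearly independent translations, i.e. a $d_2$-dimensional space group.

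For the invariance claim, fix $\rho$ with $\rho|_{\T^N}=I_{d_\rho}$, $N\in\MM$, and $\gamma=\iso{\rot(\pi(g))}k\in\G_\rho$. The linear part $\rot(\pi(g))$ lies in the point group of $\SG$, which preserves $\LL$ and therefore its dual $\dualL$, hence preserves $\dualL/N=\tfrac1N\dualL$. For the translation part, restricting $g\cdot\rho=\chi_k\rho$ to $s\in\T^N$ and using $\T^N\triangleleft\G$ together with $\rho|_{\T^N}=I_{d_\rho}$ yields $\chi_k(s)=1$ for all $s\in\T^N$, i.e. $k\in\dualL/N$ by Lemma~\ref{Lemma:VNK}. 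Consequently $\gamma\cdot x=\rot(\pi(g))x+k\in\dualL/N$ for every $x\in\dualL/N$, which is the asserted invariance.

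I expect the main obstacle to be the upper inclusion $\set{k}{\iso{I_{d_2}}k\in\G_\rho}\le\dualL/m_0$: the delicate point is the centralizer argument showing that a pure translation element of $\G_\rho$ is realized by some $g\in\G$ commuting with $\T^{m_0}$, which hinges on the structural fact $\T^{m_0}\cap\F=\{\id\}$ and is exactly where the special role of $m_0$ enters. Everything else reduces to the twisting identity and the standard dictionary between characters $\chi_k$ and the (dual) lattice provided by Lemma~\ref{Lemma:VNK}.
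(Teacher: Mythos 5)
This proposition is quoted from the companion paper \cite{SchmidtSteinbach:21a}; the present paper contains no proof of it, so your argument can only be judged on its own merits. Structurally it is sound and, I believe, close to the intended one: the twisting identity $\trans(\pi(g^{-1}hg))=\rot(\pi(g))^{-1}\trans(\pi(h))$, hence $g\cdot(\chi_k\rho)=\chi_{\rot(\pi(g))k}(g\cdot\rho)$, correctly yields the subgroup property; the lower inclusion via $\chi_k|_{\T\F}\equiv 1\iff k\in\dualL$ is right; the discreteness argument (translation subgroup squeezed between the full-rank lattices $\dualL$ and $\dualL/m_0$, finitely many linear parts, so each orbit is a finite union of lattice translates) is right; and the invariance claim is handled correctly, since there both sides restrict to scalars on $\T^N$.

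The one genuine gap is in the upper inclusion, at ``evaluating $g\cdot\rho=\chi_k\rho$ on $s\in\T^{m_0}$ then forces $\chi_k(s)=1$.'' In Definition~\ref{Definition:Grho} the symbol $\rho$ ranges over $\dual{\T\F}$, i.e.\ over equivalence classes (note the clause ``we also set $\G_{\rho'}:=\G_\rho$ if $\rho'\in\rho$''), so the relation $g\cdot\rho=\chi_k\rho$ only gives a unitary $T$ with $\rho(g^{-1}sg)=T^{\mathsf H}\chi_k(s)\rho(s)T$; you cannot cancel $\rho(s)$ pointwise, and a determinant or trace argument only yields $\chi_k(s)^{d_\rho}=1$, which is weaker than what you need. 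The repair is available from your own computation: your centralizer argument uses only $\rot(\pi(g))=I_{d_2}$, i.e.\ it applies to every $g\in\T\F$ and shows $\T^{m_0}\le Z(\T\F)$ (via $\T^{m_0}\cap\F=\{\id\}$). Since $\rho$ is irreducible, Schur's lemma then gives $\rho(s)=\psi(s)I_{d_\rho}$ for a character $\psi$ of $\T^{m_0}$, and comparing scalars in $\psi(s)I_{d_\rho}=T^{\mathsf H}\chi_k(s)\psi(s)T$ does force $\chi_k(s)=1$, whence $k\in\dualL/m_0$ by Lemma~\ref{Lemma:VNK}. (Note also that a witness $g$ with $\rot(\pi(g))=I_{d_2}$ necessarily lies in $\T\F$, so $g\cdot\rho\cong\rho$ automatically and the condition reduces to $\chi_k\rho\cong\rho$; this makes the need for Schur's lemma unavoidable rather than a technicality.) With this insertion the proof is complete.
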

The following theorem summarizes results from \cite{SchmidtSteinbach:21a} on the structure of $\dual\G$ that will be needed in the proof of Theorem~\ref{Theorem:LambdaInduced}. As they are not needed for its mere statement, Theorem~\ref{Theorem:MainRepDual} can be safely skipped on first reading. Here the quotient space $\R^{d_2}/\G_\rho$ denotes the set of orbits of $\R^{d_2}$ under the action of $\G_\rho$, cf.\ Proposition~\ref{Proposition:SpaceGroupLattice}, and we write $\bigsqcup$ for a disjoint union. 
Also recall that the sets $R$ in the following theorem are finite.
\begin{Theorem}\label{Theorem:MainRepDual}
\begin{enumerate}
\item\label{Theorem-item:RepSet}
Let $R$ be a representation set of $\dual{\T\F}/{\simrg}$.
Then, the following map is bijective:
\begin{align*}
\bigsqcup_{\rho\in R}\R^{d_2}/\G_\rho\to\Ind_{\T\F}^\G(\dual{\T\F}),\qquad
(\G_\rho\cdot k,\rho)\mapsto\Ind_{\T\F}^\G(\chi_k\rho).
\end{align*}
\item\label{Theorem-item:RepSys}
Let $R$ be a representation set of $\set{\rho\in\dual{\T\F}}{\rho|_{\T^{m_0}}=I_{d_\rho}}/{\simrg}$ and $N\in \MM$.
Then the following map is bijective: 
\begin{align*}
&\bigsqcup_{\rho\in R}(\dualL/N)/\G_\rho\to\Ind_{\T\F}^\G(\set{\rho\in\dual{\T\F}}{\rho|_{\T^N}=I_{d_\rho}}), \qquad 
(\G_\rho\cdot k,\rho)\mapsto\Ind_{\T\F}^\G(\chi_k\rho).
\end{align*}
\item\label{Corollary-item:Subreps-induced}
Let $R$ be as in \ref{Theorem-item:RepSet}. For every $\sigma\in\dual\G$ there exists a $\rho\in R$ and a $k\in\R^{d_2}$ such that $\sigma$ is a subrepresentation of $\Ind_{\T\F}^\G(\chi_k\rho)$. If moreover $R$ is as in \ref{Theorem-item:RepSys} and $\rho|_{\T^N}=I_{d_\rho}$ for an $N \in M_0$, then $k$ can be chosen in $\dualL/N$. 
\end{enumerate}
\end{Theorem}
%

%-------------------------------------------------------------------
\subsection{Deformations and local rigidity seminorms}%\label{subsection:deformation-seminorms}
%-------------------------------------------------------------------
%
We consider deformation mappings $y\colon\G\cdot x_0\to\R^d$ of the structure $\G\cdot x_0$. One can describe such a mapping by the induced `deformation' $v\colon\Gcoset\to\R^d$ on the set $\Gcoset:=\G/\G_{x_0}$ of left cosets  of the stabilizer subgroup, given by $v(g)=y(g\cdot x_0)$.
In order to describe the action of a deformation at $g\cdot x_0$ in relation to its position within the whole structure $\G\cdot x_0$ in its environment we define an associated `group displacement mapping' $u\colon\G\to\R^d$ such that
\begin{equation}\label{eq:group-defo}
v(g)=\frac1{\abs{\G_{x_0}}}\sum_{g'\in g}g'\cdot(x_0+u(g'))\qquad\text{for all }g\in\Gcoset,
\end{equation}
\eg by setting $u(g')=\rot(g')^T(v(g'\G_{x_0})-g'\cdot x_0)$. 
More generally, if $\RR\subset\G$ is such that $\RR\G_{x_0}=\RR$ we write $\Rcoset$ for $\RR/\G_{x_0}$ and for any mapping $u\colon\RR\to\R^d$ we define the averaged mapping $\proj{u}\colon\RR\to\R^d$ by  
\[\proj{u}(g)=\frac1{\abs{\G_{x_0}}}\sum_{h'\in \G_{x_0}}\rot(h')u(gh')\qquad\text{for all }g\in\RR.\]
So by \eqref{eq:group-defo} $\rot(g)\proj{u}(g)=v(g)-g\cdot x_0$ only depends on $g\G_{x_0}\in\Rcoset$. 
(Hence, the expression $\rot(g)\proj{u}(g)$ is unambiguously understood for $g\in\Rcoset$.) 
In particular, $v$ is the translation $v(g)=g\cdot x_0+a$ for all $g\in\Gcoset$ and an $a\in\R^d$ if and only if $\rot(g)\proj{u}(g)=a$ for all $g\in\G$ and $v$ is the rotation $v(g)=R(g\cdot x_0)$ for all $g\in\Gcoset$ and an $R\in\SO(d)$ if and only if $\rot(g)\proj{u}(g)=(R-I_d)(g\gdot x_0)$ for all $g\in\G$. 
We note that this defines a projection 
\begin{align}\label{eq:pi-proj}
\pi\colon\{u\colon\RR\to\R^d\}\to\{u\colon\RR\to\R^d\}, \qquad 
u\mapsto\proj{u}. 
\end{align}
(In case $\G_{x_0}=\{\id\}$, $\pi$ is the identity mapping.) 
For brevity we introduce the notation 
\begin{align*}
\UPerC&:=\Per(\G,\C^{d\times 1})=\set{u\colon\G\to\C^d}{u\text{ is periodic}}, \\ 
\UPer&:=\set{u\colon\G\to\R^d}{u\text{ is periodic}}\subset\UPerC.
\end{align*}

We will measure the deviation of a displacement from the set of rigid motions or a specific subset thereof locally for a given neighborhood range  $\RR$ in terms of certain seminorms on $\UPer$. In particular, $\norm \fdot_\RR$ will measure the local distances from the set of all infinitesimal rigid motions. While the linear component of a general rigid motion is a generic skew symmetric matrix $S \in \Skew(d)$, stronger seminorms are obtained by restricting to specific subsets.

For $S\in\Skew(d)$ we write $S\in\Skew_{0}(d)$ if its lower right $d_2\times d_2$ block vanishes and $S\in\Skew_{0,0}(d)$ if all entries outside its upper left $d_1\times d_1$ block vanish. 
The seminorm $\norm \fdot_{\RR,0}$ will measure the local distances to those rigid motions that fix $\{0_{d_1}\}\times\R^{d_2}$ intrinsically, corresponding to $S \in\Skew_0(d)$, while $\norm \fdot_{\RR,0,0}$ will measure the local distances to those rigid motions that fix $\{0_{d_1}\}\times\R^{d_2}$ in $\R^{d}$, corresponding to $S\in\Skew_{0,0}(d)$. 
\begin{Definition}\label{Definition:UtransUrot}
For all $\RR\subset\G$ such that $\RR\G_{x_0}=\RR$ we define the vector spaces 
\begin{align*}
\Utrans{\RR}&:=\set[\bigg]{u\colon\RR\to\R^d}{\exists\+ a\in\R^d\; \forall g\in\RR:\rot(g)\proj{u}(g)=a},\\
\Urot{\RR}&:=\set[\bigg]{u\colon\RR\to\R^d}{\exists\+ S\in\Skew(d)\; \forall g\in\RR:\rot(g)\proj{u}(g)=S(g\gdot x_0-x_0)},\\
\Uiso{\RR}&:=\Utrans{\RR}+\Urot{\RR}.
\end{align*}
Analogously we define $\zeroUrot{\RR}$ and $\zeroUiso{\RR}$ as well as $\Unewrot{\RR}$ and $ \Unewiso{\RR}$ if in the above definition we replace $\Skew(d)$ by $\Skew_{0}(d)$, respectively, $\Skew_{0,0}(d)$. 
\end{Definition}
By construction, all these sets are invariant under $\pi$. 
Clearly, $\Unewrot{\RR}\subset\zeroUrot{\RR}\subset\Urot\RR$ and it is easy to see that $\pi(U_{\mathrm{iso}(,0,0)}(\RR))=\pi(\Utrans\RR)\oplus \pi(U_{\mathrm{rot}(,0,0)}(\RR))$. 
\begin{Proposition}\label{Proposition:UtransUrotbig}
Suppose that $\RR\subset\G$ is such that  $\RR\G_{x_0}=\RR$, $\id\in\RR$ and $\aff(\RR\cdot x_0)=\aff(\G\cdot x_0)$. Let $\pi$ be as in \eqref{eq:pi-proj}.
Then
\begin{align*}
&\begin{aligned}\phi_1\colon&\R^d\to\pi({\Utrans\RR})\\
&a\mapsto\parens[\big]{\RR\to\R^d,g\mapsto\rot(g)^{\mathsf T}a},\end{aligned}\\
&\begin{aligned}\phi_2\colon&\R^{d_3\times\daff}\times\Skew(\daff)\to\pi(\Urot\RR)\\
&(A_1,A_2)\mapsto \parens[\Big]{\RR\to\R^d,g\mapsto\rot(g)^{\mathsf T}\parens[\Big]{\begin{smallmatrix}0&A_1\\-A_1^{\mathsf T}&A_2\end{smallmatrix}}(g\gdot x_0-x_0)},\end{aligned}\\
&\begin{aligned}\phi_3\colon&\R^{d_3\times d_4}\times\R^{d_3\times d_2}\times\Skew(d_4)\times\R^{d_4\times d_2}\to\pi(\zeroUrot\RR)\\
&(A_1,A_2,A_3,A_4)\mapsto \parens[\bigg]{\RR\to\R^d,g\mapsto\rot(g)^{\mathsf T}\parens[\bigg]{\begin{smallmatrix}0&A_1&A_2\\-A_1^{\mathsf T}&A_3&A_4\\-A_2^{\mathsf T}&-A_4^{\mathsf T}&0\end{smallmatrix}}(g\gdot x_0-x_0)},\quad\text{and}\end{aligned}\\
%\shortintertext{and}
&\begin{aligned}\phi_4\colon&\R^{d_3\times d_4}\times\Skew(d_4)\to\pi(\Unewrot\RR)\\
&(A_1,A_2)\mapsto \parens[\Big]{\RR\to\R^d,g\mapsto\rot(g)^{\mathsf T}\parens[\Big]{\parens[\Big]{\begin{smallmatrix}0&A_1\\-A_1^{\mathsf T}&A_2\end{smallmatrix}}\oplus0_{d_2,d_2}}(g\gdot x_0-x_0)}\end{aligned}
\end{align*}
are isomorphisms, where $d_3=d-\daff$ and $d_4=\daff-d_2$.
\end{Proposition}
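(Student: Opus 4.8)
The plan is to treat the four maps in parallel, since $\phi_2,\phi_3,\phi_4$ all have the form $u(g)=\rot(g)^{\mathsf T}M(g\gdot x_0-x_0)$ for a skew matrix $M$ of a prescribed block type, while $\phi_1$ has the degenerate form $u(g)=\rot(g)^{\mathsf T}a$. Every injectivity and surjectivity argument will rest on one geometric observation. Because $\id\in\RR$ we have $x_0\in\RR\gdot x_0$, so the linear span of $\set{g\gdot x_0-x_0}{g\in\RR}$ equals $\aff(\RR\gdot x_0)-x_0$; the hypothesis $\aff(\RR\gdot x_0)=\aff(\G\gdot x_0)$ together with the coordinate normalization placing $\G\gdot x_0$ in $\{0_{d_3}\}\times\R^{d_4}\times\R^{d_2}$ (with $d_3=d-\daff$, $d_4=\daff-d_2$, and the last factor carrying the space-group translations) then identifies this span with the subspace $\{0_{d_3}\}\times\R^\daff$. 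In particular each $g\gdot x_0-x_0$ has vanishing first $d_3$ coordinates, and the difference vectors sweep out every coordinate of the remaining $\R^\daff$.

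First I would dispatch $\phi_1$: it is linear into $\Utrans\RR$, and since $\rot(\id)=I_d$ the witness of any $u\in\Utrans\RR$ is forced to be $u(\id)$, giving surjectivity ($u=\phi_1(u(\id))$) and injectivity ($\phi_1(a)=0$ forces $a=\phi_1(a)(\id)=0$). For $\phi_2,\phi_3,\phi_4$ I would run the same three steps. (i) Well-definedness: the displayed block matrices are skew-symmetric of exactly the types $\Skew(d)$, $\Skew_{0}(d)$, $\Skew_{0,0}(d)$ once one reads off the block sizes $d_3,d_4,d_2$ and splits $d_1=d_3+d_4$, so each map lands in the correct space. (ii) Injectivity: if the output is the zero function then the underlying skew matrix annihilates every $g\gdot x_0-x_0$, hence all of $\{0_{d_3}\}\times\R^\daff$ by the spanning fact; since the difference vectors sweep out every coordinate of $\R^\daff$, each parameter block $A_i$ is forced to vanish. (iii) Surjectivity: given $u$ with witness $S$ in the appropriate symmetry class, the first $d_3$ columns of $S$ do not contribute to $S(g\gdot x_0-x_0)$ because difference vectors vanish there, so its $d_3\times d_3$ corner may be set to $0$ without changing $u$; the remaining blocks then read off the parameters $A_i$, and the constraints $S_3=0$ (for $\zeroUrot\RR$) or $S_2=S_3=0$ (for $\Unewrot\RR$) reproduce the prescribed block patterns.

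The three direct-sum claims then reduce to verifying $\Utrans\RR\cap\Urot\RR=\{0\}$, since $\Unewrot\RR\subset\zeroUrot\RR\subset\Urot\RR$ makes the other two intersections no larger. If $u$ lies in this intersection, then $\rot(g)u(g)$ is at once a constant $a$ and $S(g\gdot x_0-x_0)$; evaluating at $g=\id$ yields $a=0$, whence $u\equiv0$. That the sums exhaust $\Uiso\RR$, $\zeroUiso\RR$, $\Unewiso\RR$ is immediate from their definitions.

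I expect the only genuine friction to be the block bookkeeping in step (iii): one must check that zeroing the $d_3\times d_3$ corner of $S$ is legitimate precisely because difference vectors have no $d_3$-component, and that the conditions cutting out $\Skew_{0}(d)$ and $\Skew_{0,0}(d)$ fall on the correct $3\times3$ blocks once $d_1$ is decomposed as $d_3+d_4$. Once the spanning fact is in place, everything else is routine linear algebra that it renders automatic.
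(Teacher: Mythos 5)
Your proof is correct and complete. The paper states this proposition without proof (it is introduced as ``elementary,'' with the underlying coordinate normalizations taken from the companion paper \cite{SchmidtSteinbach:21b}), so there is no argument of the authors' to compare against; your reduction of everything to the single spanning fact $\spano\set{g\gdot x_0-x_0}{g\in\RR}=\{0_{d_3}\}\times\R^{\daff}$ (valid since $\id\in\RR$ puts $x_0$ in $\RR\gdot x_0$), with injectivity read off from the last $\daff$ columns of the skew witness and surjectivity obtained by zeroing its irrelevant $d_3\times d_3$ corner, is exactly the natural argument, and the observation that all three direct-sum claims reduce to $\Utrans\RR\cap\Urot\RR=\{0\}$ via evaluation at $\id$ closes the proof.
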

\begin{Remark}
In Proposition~\ref{Proposition:Manifold} we show that indeed $\Uiso\RR$ is the set of \emph{infinitesimally rigid displacements} of $\RR$ which is the tangent space at the identity mapping to the space of finite rigid deformations.
\end{Remark}
\begin{Definition}
Let $\RR\subset\G$ be a finite set. We denote by $\norm\fdot$ the Euclidean norm on $(\R^d)^{\RR}$, \ie, $\norm{u}^2=\sum_{g\in\RR}\norm{u(g)}^2$, and by $\pi_{\Uiso\RR}$, $\pi_{\zeroUiso\RR}$, $\pi_{\Unewiso\RR}$ the orthogonal projections on $\{u\colon\RR\to\R^d\}$ with respect to the norm $\norm\fdot$ with kernels $\Uiso\RR$, $\zeroUiso\RR$ and $\Unewiso\RR$, respectively. 

We define three seminorms $\norm\fdot_{\RR}\le\zeronorm\fdot{\RR}\le\newnorm\fdot{\RR}$ on $\UPer$ by setting 
\begin{align*}
\norm{u}_{\RR(,0,0)}&=\parens[\Big]{\frac1{\abs{\CC_N}}\sum_{g\in\CC_N}\norm{\pi_{U_{\mathrm{iso}(,0,0)}(\RR}(u(g\fdot)|_\RR)}^2}^{\frac12}
\end{align*}
(with the obviously matching choices of indices $0$), whenever $u$ is $\T^N$-periodic.  
\end{Definition}
Due to the discrete nature of the underlying objective structure these seminorms can equivalently be described as seminorms on (discrete) gradients. 
\begin{Definition}
For all $u\in\UPer$ and finite sets $\RR\subset\G$ such that $\RR\G_{x_0}=\RR$ we define the \emph{discrete derivative}
\begin{align*}
\nabla_\RR u\colon&\G\to\{v\colon\RR\to\R^d\}\\
&g\mapsto(\nabla_\RR u(g)\colon\RR\to\R^d,h\mapsto \proj{u}(gh)-\rot(h)^{\mathsf T}\proj{u}(g)).
\end{align*}
\end{Definition}
We remark that, if $u\in\UPer$ is $\T^N$-periodic for some $N\in\MM$ and $\RR\subset \G$ is finite, then also the discrete derivative $\nabla_{\RR}u$ is $\T^N$-periodic.
\begin{Definition}
For each finite set $\RR\subset\G$ we define the four seminorms $\nablanorm\fdot\RR\le\nablazeronorm\fdot\RR\le\newnablanorm\fdot\RR\le\norm{\nabla_{\RR}\fdot}_2$ on $\UPer$ by first setting 
\begin{align*}
\norm{u}_{\RR,\nabla(,0,0)}&=\parens[\Big]{\frac1{\abs{\CC_N}}\sum_{g\in\CC_N}\norm{\pi_{U_{\mathrm{rot}(,0,0)}(\RR)}(\nabla_\RR u(g))}^2}^{\frac12}
\end{align*}
(with matching choices of indices $0$), whenever $u$ is $\T^N$-periodic. Here $\pi_{U_{\mathrm{rot}(,0,0)}(\RR)}$ are the orthogonal projections on $\{u\colon\RR\to\R^d\}$ with respect to the norm $\norm\fdot$ with respective kernels $U_{\mathrm{rot}(,0,0)}(\RR)$. 
The fourth seminorm is given by
\begin{align*}
\norm{\nabla_{\RR} u}_2&=\parens[\Big]{\frac1{\abs{\CC_N}}\sum_{g\in\CC_N}\norm{\nabla_{\RR} u(g)}^2}^{\frac12}.
\end{align*}
\end{Definition}
We note that all of the above introduced seminorms are independent of the choice of $\CC_N$.
The main result of \cite{SchmidtSteinbach:21b} is summarized in the following Theorem~\ref{Theorem:StrongerEquivalence} on the equivalence of seminorms whenever the range $\RR$ is rich enough. For this we first introduce the following notation. 
\begin{Definition}\label{Definition:Property}
$\RR\subset\G$ is an \emph{admissible} neighborhood range of $\id$ if $\RR$ is finite, $\RR\G_{x_0}=\RR$ and there exist two sets $\RR',\RR''\subset \G$ with $\RR'\RR''\subset\RR$ such that $\id\in\RR'\cap\RR''$, $\RR'$ generates $\G$ and 
\[\aff(\RR''\cdot x_0)=\aff(\G\cdot x_0).\]
\end{Definition}
The highly non-trivial part of this theorem is that for an admissible $\RR\subset\G$ the two seminorms $\norm\fdot_{\RR}$ and $\zeronorm\fdot{\RR}$ are equivalent. This is a \emph{discrete Korn inequality} for objective structures. 
Examples show that in general $\norm\fdot_{\RR}$ and $\newnorm\fdot{\RR}$ are not equivalent.
\begin{Theorem}\label{Theorem:StrongerEquivalence}
Suppose $\RR,\RR'\subset\G$ are admissible $\id$-neighborhoods and $\RR''\subset\G$ is a finite generating set for $\G$ such that $\RR''\G_{x_0}=\RR''$. Then 
\begin{enumerate}
\item\label{Theorem:EquivalenceAll} 
$\norm\fdot_{\RR}$, $\norm\fdot_{\RR'}$, $\zeronorm\fdot{\RR}$, $\nablanorm\fdot{\RR}$ and $\nablazeronorm\fdot{\RR}$ are equivalent and their kernel is $\UIso\cap\UPer$, 
\item\label{Theorem:NewEquivalenceAll} 
$\newnorm\fdot{\RR}$, $\newnorm\fdot{\RR'}$, and $\newnablanorm\fdot{\RR}$ are equivalent and their kernel is $\UIso\cap\UPer$, 
\item\label{Theorem:nablaequivalent} 
$\norm{\nabla_{\RR}\fdot}_2$ and $\norm{\nabla_{\RR''}\fdot}_2$ are equivalent and their kernel is $\UTrans\cap\UPer$.
\item\label{Theorem:seminormequivalence} 
If $\G$ is a space group, then $\norm\fdot_{\RR}$, $\newnorm\fdot{\RR}$ and $\norm{\nabla_{\RR}\fdot}_2$ are equivalent.
\end{enumerate}
\end{Theorem}
%

%-------------------------------------------------------------------
\section{Energy, criticality and stability}%\label{section:potential}
%
%-------------------------------------------------------------------
\subsection{Configurational energy and stability constants}
We assume that the configurational energy of a deformed objective structure is given as a sum of site potentials that describe the interaction of any single atom with all other atoms. Although we will consider bounded perturbations of the identity eventually, it will be convenient to define the interaction potential on all of  
$(\R^d)^\Gstar$, cf.\ Remark~\ref{Remark:VFrechet}\ref{item:RemarkVDomain} below,  where $\Gstar = \Gcoset\setminus\G_{x_0}$ consists of the nontrivial left cosets of $\G_{x_0}$. 
\begin{Definition}\label{Definition:potential}
Let $V\colon(\R^d)^\Gstar\to\R$ be the \emph{interaction potential}.
We assume that $V$ has the following properties:
\begin{enumerate}[label=(H\arabic*)]
\item\label{item:Rotation} (\emph{Invariance under $\O(d)$}) For all $R\in\O(d)$ and $y\colon\Gstar\to\R^d$ we have
\[V(Ry)=V(y).\]
\item\label{item:Vfrechet}(\emph{Smoothness}) For all $y\colon\Gstar\to\R^d$ the function
\begin{align*}
L^\infty(\Gstar,\R^d)\to\R, \qquad 
z\mapsto V(y+z)
\end{align*}
is two times continuously Fr\'echet differentiable, where $L^\infty(\Gstar,\R^d)$ is the space of all bounded functions from $\Gstar$ to $\R^d$ equipped with the uniform norm $\norm\fdot_\infty$.
\end{enumerate}
For all $y\colon\Gstar\to\R^d$ and $g,h\in\Gstar$ we define the partial Jacobian row vector $\partial_gV(y)\in\R^d$ and the partial Hessian matrix $\partial_g\partial_hV(y)\in\R^{d\times d}$ by
\begin{align*}
\parens{\partial_gV(y)}_i:=V'(y)(\delta_ge_i)
\qquad\text{and}\qquad
\parens{\partial_g\partial_hV(y)}_{ij}:=V''(y)(\delta_ge_i,\delta_he_j)
\end{align*}
for $i,j\in\{1,\dots,d\}$, where $\delta_k\colon\Gstar\to\{0,1\}$, $l\mapsto\delta_{k,l}$ for each $k\in\Gcoset$.
\begin{enumerate}[label=(H\arabic*)]\setcounter{enumi}{2}
\item\label{item:infty}(Summability) For all $y\colon\Gstar\to\R^d$ we have
\[\sum_{g\in\Gstar}\norm{\partial_g V(y)}<\infty\quad\text{and}\quad \sum_{g,h\in\Gstar}\norm{\partial_g\partial_hV(y)}<\infty.\]
\end{enumerate}
We say a set $\RRVcoset\subset\Gstar$ is an \emph{interaction range} of $V$ if for all $y\colon\Gstar\to\R^d$ we have $V(y)=V(\chi_{\RRVcoset}y)$, where $\chi_{\RRVcoset}$ is the indicator function.
We denote $y_0=(g\gdot x_0-x_0)_{g\in\Gstar}\in(\R^d)^\Gstar$.
If $V$ has finite interaction range $\RRVcoset$, then we extend the domain of $V'(y_0)$ and $V''(y_0)$ to $\{z\colon\Gstar\to\R^d\}$ and $\{z\colon\Gstar\to\R^d\}^2$, respectively, by
\begin{align*}
V'(y_0)z_1:=V'(y_0)(\chi_{\RRVcoset}z_1)
\qquad{and}\qquad
V''(y_0)(z_1,z_2):=V''(y_0)(\chi_{\RRVcoset}z_1,\chi_{\RRVcoset}z_2)
\end{align*}
for all $z_1,z_2\in\{z\colon\Gstar\to\R^d\}\setminus L^\infty(\Gstar,\R^d)$.
\end{Definition} % vgl. stability Hudson Ortner
\begin{Remark}\label{Remark:VFrechet}
\begin{enumerate}
\item\label{item:RemarkVFrechet} For all $y\colon\Gstar\to\R^d$ and $z,z_1,z_2\in L^\infty(\Gstar,\R^d)$ we have
\begin{align*}
V'(y)z&=\sum_{g\in\Gstar}\partial_g V (y)z(g)
\shortintertext{and}
V''(y)(z_1,z_2)&=\sum_{g,h\in\Gstar}z_1(g)^{\mathsf T}\partial_g\partial_hV(y)z_2(h).
\end{align*}
\item If $V$ has finite interaction range, then \ref{item:Vfrechet} implies \ref{item:infty}.
\item\label{item:RemarkVDomain} For simplicity we assume that the domain of $V$ is the whole space $(\R^d)^\Gstar$.
It would be sufficient if $V$ is defined only on $\O(d)y_0+U$, where $U$ is a small neighbourhood of $0\in(\R^d)^{\Gstar}$ with respect to the uniform norm.
\item\label{item:RemarkGTrivial} If $\G_{x_0}$ is trivial, the domain $\Gstar$ of $V$ may be identified with $\GstarExample$.
\end{enumerate}
\end{Remark}
\begin{Example}
An example of an interaction potential is given in terms of the Lennard-Jones pair interaction potential as 
\[V\colon(\R^d)^\Gstar\to\R,\quad y\mapsto\sum_{g\in\Gstar}\parens[\big]{\norm{y(g)}^{-12}-\norm{y(g)}^{-6}}.\]
\end{Example}
Recall from \eqref{eq:group-defo} that, for a given displacement $u\colon\G\to\R^d$ the physical particles are at the points $v(g\G_{x_0})=g\cdot x_0+\rot(g)\proj{u}(g)$, $g\in\G$, and in particular $u=0$ corresponds to the identity mapping.
It will be convenient to write the energy as a functional acting on $w=\xx+u$ so that $v(g)=\frac1{\abs{\G_{x_0}}}\sum_{g'\in g}g'\cdot w(g')$ for $g\in\Gcoset$ and the constant function $w=\xx$ corresponds to the identity deformation.
\begin{Definition}%\label{Definition:configurationalenergy}
The \emph{configurational energy} with interaction potential $V$ is 
\begin{align*}
\begin{split}
E\colon&\UPer\to\R\\
&w\mapsto\frac1{\abs{\CC_N}}\sum_{g\in\CC_N}V\parens[\Big]{\parens[\Big]{\frac1{\abs{\G_{x_0}}}\sum_{h'\in h}(gh')\gdot w(gh')-\frac1{\abs{\G_{x_0}}}\sum_{h'\in\G_{x_0}}(gh')\gdot w(gh')}_{h\in\Gstar}}
\end{split}
\end{align*}
if $w$ is $\T^N$-periodic and $N\in\MM$.
\end{Definition}
\begin{Remark}
The function $E$ is well-defined and independent of the choice of the representation set $\CC_N$ for all $N\in \MM$.
\end{Remark}
\begin{Lemma}\label{Lemma:FrechetE}
The function $E$ is two times continuously Fr\'echet differentiable with respect to the uniform norm $\norm\fdot_\infty$.
We have
\begin{align*}
&E(\xx)=V(y_0),\\
&E'(\xx)u=\frac1{\abs{\CC_N}}\sum_{g\in\CC_N}V'(y_0)\parens[\big]{\rot(h)\proj{u}(gh)-\proj{u}(g)}_{h\in\Gstar},
\quad \text{and}\\
&E''(\xx)(u,v)\\
&\qquad=\frac1{\abs{\CC_N}}\sum_{g\in\CC_N}V''(y_0)\parens[\Big]{\parens[\big]{\rot(h)\proj{u}(gh)-\proj{u}(g)}_{h\in\Gstar},\parens[\big]{\rot(h)\proj{v}(gh)-\proj{v}(g)}_{h\in\Gstar}}
\end{align*}
for all $u,v\in\UPer$ and $N\in \MM$ such that $u$ and $v$ are $\T^N$-periodic.
\end{Lemma}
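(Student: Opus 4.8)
The plan is to use the orthogonal invariance~\ref{item:Rotation} to recast $E$ as a finite average of the $C^2$ potential $V$, evaluated at a fixed (possibly unbounded) reference configuration plus a \emph{bounded} perturbation, and then to invoke~\ref{item:Vfrechet}. Fix a $\T^N$-periodic $u$ and set $w=\xx+u$, so that $w(g)=x_0+u(g)$. Using $g\gdot w(g)=\rot(g)(x_0+u(g))+\trans(g)$ together with $\rot(gh)=\rot(g)\rot(h)$ and $\trans(gh)=\trans(g)+\rot(g)\trans(h)$, a direct computation gives
\[
\rot(g)^{\mathsf T}\parens[\big]{(gh)\gdot w(gh)-g\gdot w(g)}=(h\gdot x_0-x_0)+\rot(h)u(gh)-u(g)
\]
for all $g\in\G$ and $h\in\Gstar$. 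Applying~\ref{item:Rotation} with the orthogonal matrix $R=\rot(g)^{\mathsf T}$ to each summand of $E$, I obtain the key identity
\[
E(\xx+u)=\frac1{\abs{\CC_N}}\sum_{g\in\CC_N}V\parens[\big]{y_0+w_g^u},\qquad w_g^u:=\parens[\big]{\rot(h)u(gh)-u(g)}_{h\in\Gstar}.
\]
The decisive point is that $u\mapsto w_g^u$ is linear and maps $\UPer$ into $L^\infty(\Gstar,\R^d)$ with $\norm{w_g^u}_\infty\le2\norm u_\infty$, since every periodic $u$ is bounded and each $\rot(h)$ is an isometry. Thus the unbounded reference $y_0$ has been isolated as the fixed argument of $V$, while the variable part $w_g^u$ is bounded --- precisely the situation addressed by~\ref{item:Vfrechet}.

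With this reduction in hand, the regularity and the formulas follow from the chain rule. For each fixed $g$ the summand $u\mapsto V(y_0+w_g^u)$ is the composition of the bounded linear operator $L_g\colon u\mapsto w_g^u$ with the map $z\mapsto V(y_0+z)$, which by~\ref{item:Vfrechet} is twice continuously Fréchet differentiable on $L^\infty(\Gstar,\R^d)$; hence each summand is $C^2$, with first and second derivatives in directions $\eta$ and $(\eta,\zeta)$ given by $V'(y_0+w_g^u)(w_g^\eta)$ and $V''(y_0+w_g^u)(w_g^\eta,w_g^\zeta)$. Since $w_g^\eta$ is bounded we stay inside the domain of~\ref{item:Vfrechet} and no finite-range extension of $V'(y_0),V''(y_0)$ is required. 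Evaluating at $u=0$, where $w_g^0=0$, and averaging over the finitely many $g\in\CC_N$ yields $E(\xx)=V(y_0)$ together with the asserted expressions for $E'(\xx)u$ and $E''(\xx)(u,v)$. Carrying out the same computation at a general base point $\xx+u_0$ --- with fixed argument $y_0+w_g^{u_0}$ in place of $y_0$ --- shows that $E$ is $C^2$ on all of $\UPer$, the continuity of $E''$ being inherited from that of $V''$ and the linear (hence continuous) dependence of $w_g^{u_0}$ on $u_0$.

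The step requiring genuine care --- and the one I expect to be the main obstacle --- is that $\UPer$ is not a Banach space but the increasing union of the finite-dimensional period classes $\{\,u:\T^N\text{-periodic}\,\}$, so the fixed-$N$ chain rule above only controls directions of a fixed period, whereas Fréchet differentiability must hold for directions $\eta$ of arbitrary period. I would settle this directly: for $\eta$ of period $N_\eta$ and $M$ a common multiple of $N$ and $N_\eta$ in $\MM$, expand $z\mapsto V((y_0+w_g^{u_0})+z)$ to second order at $z=0$; because this map is $C^2$, its Taylor remainder is $o(\norm z_\infty^2)$ \emph{uniformly} on small balls, and since $\norm{w_g^\eta}_\infty\le2\norm\eta_\infty$ uniformly in $g$ the estimate survives the average over $\CC_M$. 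Two facts keep the average harmless: the forms $V'(y_0)$ and $V''(y_0)$ are bounded, controlled through the summability~\ref{item:infty} via the representations in Remark~\ref{Remark:VFrechet}\ref{item:RemarkVFrechet}; and the summand $V(y_0+w_g^{u_0})$ depends on $g$ only through $g\T^{N_0}$ with $N_0$ the period of $u_0$ --- a consequence of the normality of $\T^{N_0}$ in $\G$, which gives $h^{-1}th\in\T^{N_0}$ and hence $w_{gt}^{u_0}=w_g^{u_0}$ for $t\in\T^{N_0}$ --- so that the value of the average, and therefore of $E'(\xx)$ and $E''(\xx)$, is independent of the chosen period. Granting these uniformities, the second-order expansion of $E(\xx+\eta)$ holds with $o(\norm\eta_\infty^2)$ remainder for every $\eta\in\UPer$, which establishes the claimed twice continuous Fréchet differentiability and pins the derivatives down as the period-independent averages displayed above.
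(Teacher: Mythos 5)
Your proposal is correct and follows essentially the same route as the paper: both use the invariance \ref{item:Rotation} to rewrite each summand as $V$ evaluated at $y_0$ plus the bounded linear image $(\rot(h)u(gh)-u(g))_{h\in\Gstar}$ of $u$, and then obtain the $C^2$ regularity and the formulas from \ref{item:Vfrechet} via the chain rule (the paper packages this as the composition $E=\phi_2\circ\phi_1$). Your extra care about $\UPer$ being an increasing union of period classes, handled by passing to a common period and noting that $w_{gt}^{u}=w_g^{u}$ for $t$ in the period subgroup, is a sound elaboration of a point the paper leaves implicit.
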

(Note that, for $h\in\G$, $\rot(h)\proj{u}(gh)=\rot(h)\pproj{u(g\fdot)}(h)$ only depends on $h\G_{x_0}$.) The proof of Lemma~\ref{Lemma:FrechetE} is postponed to Section~\ref{subsec:prelim}. 
\begin{Remark}
\begin{enumerate}
\item If the map in \ref{item:Vfrechet} is $n$ times (continuously) Fr\'echet differentiable for some natural number $n$, then also $E$ is $n$ times (continuously) Fr\'echet differentiable with respect to the uniform norm $\norm\fdot_\infty$.
The proof is analogous.
\item The function $E$ need not be continuous with respect to the norm $\norm\fdot_2$.
In particular $E$ is not two times Fr\'echet differentiable with respect to $\norm{\nabla_\RR\fdot}_2$ although in other models a similar proposition is true, see, \eg, \cite[Theorem 1]{Ortner2013}.
\end{enumerate}
\end{Remark}
We fix an admissible neighborhood range $\RR\subset\G$ of $\id$. Furthermore we assume that $\G\cdot x_0$ is not the trivial structure $\{x_0\}$ such that $\lambdaa<\infty$ and $\lambdanewa<\infty$ in the following.  
\begin{Definition}\label{Definition:Stable}
We say that $w\in\UPer$ is a \emph{critical point} of $E$ if $E'(w)=0$.
We say that $(\G,x_0,V)$ is \emph{stable (in the atomistic model)} with respect to $\norm\fdot_\RR$ (resp.\ $\newnorm\fdot\RR$) if $\xx$ is a critical point of $E$ and the bilinear form $E''(\xx)$ is coercive with respect to $\norm\fdot_\RR$ (resp.\ $\newnorm\fdot\RR$), \ie there exists a constant $c>0$ such that, respectively, 
\[c\norm u_{\RR(,0,0)}^2\le E''(\xx)(u,u)\qquad\text{for all }u\in\UPer.\]
We define the corresponding constants $\lambdaa, \lambdanewa \in\R\cup\{-\infty\}$ by 
\begin{align*}
&\lambda_{\textnormal{a}(,0,0)}
:=\sup\set[\big]{c\in\R}{\forall\+ u\in\UPer:c\norm u_{\RR(,0,0)}^2\le E''(\xx)(u,u)}\in\R\cup\{-\infty\}.
\end{align*}
\end{Definition}
\begin{Remark}\label{Remark:BilinearFormLambda}
\begin{enumerate}
\item\label{item:BilinearFormLambda} The bilinear form $E''(\xx)$ is coercive with respect to the seminorm $\norm\fdot_\RR$ (resp.\ $\newnorm\fdot\RR$) if and only if $\lambdaa>0$ (resp.\ $\lambdanewa>0$).
\item If $(\G,x_0,V)$ is stable with respect to $\newnorm\fdot\RR$, then $(\G,x_0,V)$ is also stable with respect to $\norm\fdot_\RR$, since $\norm\fdot_{\RR}\le\newnorm\fdot{\RR}$.
\item The above definition of the stability and the constant $\lambdaa$ generalizes the definition in \cite{Hudson2011,Braun2016} where these terms are defined for lattices.
For lattices we have $\lambdaa=\lambdanewa$ since then $\norm\fdot_\RR=\newnorm\fdot\RR$.
\item By Theorem~\ref{Theorem:StrongerEquivalence} the stability of $(\G,x_0,V)$ is independent of the choice of $\RR$.
\item\label{item:lambdanotfinite} The constants $\lambdaa$ and $\lambdanewa$ need not be finite, see Example~\ref{Example:DEUiso} and Example~\ref{Example:counterV}.
In Theorems~\ref{Theorem:BoundedThree}%, \ref{Theorem:FiniteSpace} and 
--\ref{Theorem:dtwodtwo} we present sufficient conditions for both $\lambdaa\in\R$ and $\lambdanewa\in\R$.
\end{enumerate}
\end{Remark}
The following elementary proposition states a characterization of $\lambdaa$ and $\lambdanewa$ by means of the dual problem. For completeness we include its short proof in Section~\ref{subsec:prelim}.
\begin{Proposition}\label{Proposition:LambdaDual}
We have (with matching choices of indices $0$)
\begin{align*}
\lambda_{\textnormal{a}(,0,0)}=\inf\set{E''(\xx)(u,u)}{u\in\UPer,\norm u_{\RR(,0,0)}=1}.
\end{align*}
\end{Proposition}

\subsubsection*{Computationally amenable representations}
In our main stability criteria and the corresponding stability algorithm we will need to characterize the first and second derivatives of $E$ as well as $\norm\fdot_{\RR(,0,0)}$ in a computationally amenable way. To do so, we introduce the quantities $\ee, \ff$ and $g_{\RR(,0,0)}$ that will allow us to write $E'$, $E''$ and $\norm\fdot_{\RR(,0,0)}$ in terms of convolution operators, cf.\ Lemma~\ref{Lemma:energyderivativeY}, \eqref{eq:Azu1} and \eqref{eq:E-and-norm-as-conv} below. We begin with $\ee$ and $\ff$.
\begin{Definition}\label{Definition:eeff}
We define the row vector  
\[\ee:=\sum_{g\in\Gstar}\partial_g V(y_0)\parens[\big]{L_g-L_{\G_{x_0}}}\in\R^d, \]
where $L_g:=\frac1{\abs{\G_{x_0}}}\sum_{g'\in g}\rot(g')$ for $g\in\Gcoset$, and the function $\ff\in L^1(\G,\R^{d\times d})$ by
\begin{align*}
\begin{split}
\begin{aligned}
\ff(g):=\sum_{h_1,h_2\in\Gstar}\frac1{\abs{\G_{x_0}}^2}&\sum_{h_1',h_2'\in\G}\delta_{g,h_2'^{-1}h_1'}\rot(h_2')^{\mathsf T}\partial_{h_2}\partial_{h_1}V(y_0)\rot(h_1')\\
&\quad\cdot(\chi_{h_1}(h_1')-\chi_{\G_{x_0}}(h_1'))(\chi_{h_2}(h_2')-\chi_{\G_{x_0}}(h_2'))\qquad\text{for all }g\in\G.
\end{aligned}
\end{split}
\end{align*}
\end{Definition}
\begin{Remark}\label{Remark:ee}
\begin{enumerate}
\item By \ref{item:infty} the function $\ff$ is well-defined and we have
\begin{align*} 
\sum_{g\in\G}\ff(g)&=\sum_{h_1,h_2\in\Gstar}L_{\G_{x_0}}^{\mathsf T}\parens{\rot(h_2)-I_d}^{\mathsf T}\partial_{h_2}\partial_{h_1}V(y_0)\parens{\rot(h_1)-I_d}L_{\G_{x_0}}.
\end{align*}
\item\label{item:eefinite}If $\RRVset/\G_{x_0}\subset\Gstar$ is an interaction range of $V$, then we have
\[\supp\ff\subset\RRVset^{-1}\RRVset\cup\RRVset^{-1}\cup\RRVset.\]
In particular, if $V$ has finite interaction range, then the support of $\ff$ is finite.
\end{enumerate}
\end{Remark}
\begin{Definition}\label{Definition:PartialHessian}
For all $N\in\MM$ and $g,h\in\G/\T^N$ we define the partial Jacobian row vector $\partial_gE(\xx)\in\R^d$ and the partial Hessian matrix $\partial_g\partial_hE(\xx)\in\R^{d\times d}$ by
\begin{align*}
\parens{\partial_gE(\xx)}_i:=E'(\xx)(\indi_ge_i)
\qquad\text{and}\qquad
\parens{\partial_g\partial_hE(\xx)}_{ij}:=E''(\xx)(\indi_ge_i,\indi_he_j)
\end{align*}
for all $i,j\in\{1,\dots,d\}$.
\end{Definition}
The following lemma characterizes these derivatives and, in particular, shows that $\partial_g E(\xx)$ is independent of $g$ and $\partial_{g_2}\partial_{g_1} E(\xx)$ only depends on $g_2^{-1}g_1$.
\begin{Lemma}\label{Lemma:energyderivativeY}
Let $N\in \MM$.
For all $g,g_1,g_2\in\G/\T^N$ we have
\begin{align*}
\partial_g E(\xx)=\frac1{\abs{\CC_N}}\ee
\qquad\text{and}\qquad
\partial_{g_2}\partial_{g_1} E(\xx)=\frac1{\abs{\CC_N}}\sum_{g\in g_2^{-1}g_1}\ff(g).
\end{align*}

In particular, $\xx$ is a critical point of $E$ if and only if $\ee=0$. 
\end{Lemma}
Thus, by Lemma~\ref{Lemma:energyderivativeY} and Remark~\ref{Remark:BilinearFormLambda}\ref{item:BilinearFormLambda} the triple $(\G,x_0,V)$ is stable with respect to $\norm\fdot_\RR$ (resp.\ $\newnorm\fdot\RR$) if and only if $\ee=0$ and $\lambdaa>0$ (resp.\ $\lambdanewa>0$). We prove this lemma in Section~\ref{subsec:prelim}. 

Now we construct $g_{\RR(,0,0)}$. We fix a bijection $\phi\colon\RR\to\{0,\dots,\abs\RR-1\}$ and define an isomorphism between $\C^{(m\abs\RR)\times n}$ and $(\C^{m\times n})^\RR$ by
\[(a_{i,j})_{i\in\{1,\dots,m\abs\RR\};j\in\{1,\dots,n\}}\mapsto\parens[\big]{(a_{i+m\phi(g),j})_{i\in\{1,\dots,m\};j\in\{1,\dots,n\}}}_{g\in\RR}.\]
\begin{Definition}\label{Definition:gggg}
We define the functions $\gggg,\newgggg\in L^1(\G,\R^{(d\abs\RR)\times d})$ by
\begin{align*}
\gggg(g)=P\parens[\big]{\delta_{g,h}I_d}_{h\in\RR}
\qquad \text{and}\qquad 
\newgggg(g)=P_0\parens[\big]{\delta_{g,h}I_d}_{h\in\RR}\end{align*}
for all $g\in\G$,
where $P$ and $P_0$ are square matrices of order $d\abs\RR$ such that the map $\R^{d\abs\RR}\to\R^{d\abs\RR}$ with $x\mapsto Px$, respectively, $x\mapsto P_0x$, is the orthogonal projection with respect to the norm $\norm\fdot$ with kernel $\Uiso\RR$, respectively, $\Unewiso\RR$.
\end{Definition}
We then have
\begin{equation}\label{eq:Azu1}
\norm u_{\RR(,0,0)}^2=\frac1{\abs{\CC_N}}\sum_{g\in\CC_N}\norm[\big]{P_{(0)}(u(gh))_{h\in \RR}}^2.
\end{equation}

\begin{Remark}\label{Remark:newee}
\begin{enumerate}
\item\label{item:abdefd} The configurational energy is left-translation invariant, \ie for all $w\in\UPer$ and $g\in\G$ it holds $E(w)=E(w(g\fdot))$.
This implies that also $E'(\xx)$ and $E''(\xx)$ are left-translation invariant, \ie $E'(\xx)u=E'(\xx)u(g\fdot)$ and $E''(\xx)(u,v)=E(\xx)(u(g\fdot),v(g\fdot))$ for all $u\in\UPer$ and $g\in\G$.
This directly shows that $\partial_{g_1}E(\xx)=\partial_\id E(\xx)$ and $\partial_{g_2}\partial_{g_1}E(\xx)=\partial_\id\partial_{g_2^{-1}g_1} E(\xx)$ for all $N\in \MM$ and $g_1,g_2\in\G/\T^N$.

Also the seminorms $\norm\fdot_\RR$ and $\newnorm\fdot\RR$ are left-translation invariant.
\item\label{item:newee}If $V$ has finite interaction range $\RRVset/\G_{x_0}$ so that by Remark~\ref{Remark:ee}\ref{item:eefinite} $\supp\ff\subset\RRVset^{-1}\RRVset\cup\RRVset^{-1}\cup\RRVset=:\RR_\ff$, then the above lemma shows that %
\[\ff(g)
=\abs{\CC_N}\partial_\id\partial_{g\T^N} E(\xx) \quad\text{for all }g\in\RR_\ff\]
for all $N\in\MM$ large enough, precisely for $N\in\MM$ such that $\T^N\cap\RR_\ff^{-1}\RR_\ff\subset\{\id\}$. 
\item\label{Remark-item:gggg}
The support of both $\gggg$ and $\newgggg$ is equal to $\RR$.
We have
\begin{align*}
\gggg(g)=p_{\phi(g)}
\qquad\text{and}\qquad
\newgggg(g)=p_{0,\phi(g)}
\end{align*}
for all $g\in\RR$ where $P$ and $P_0$ are as above and $p_0,\dots,p_{\abs\RR-1},p_{0,0},\dots,p_{0,\abs\RR-1}\in\R^{(d\abs\RR)\times d}$ such that $P=(p_0,\dots,p_{\abs\RR-1})$ and $P_0=(p_{0,0},\dots,p_{0,\abs\RR-1})$.
\end{enumerate}
\end{Remark}
\subsection{Main results: Stability criteria and second order bounds}\label{Section:Stability-and-Boundedness}
Our main results establish upper bounds and criteria for matching lower bounds on $E''(\xx)$ in terms of $\norm\fdot_\RR$ and $\newnorm\fdot\RR$. We thus provide energy bounds and stability criteria for general objective structures. Their proofs are given in Section~\ref{section:proofs}
\subsubsection*{Energy bounds}
We recall that a bilinear form $B$ on a real vector space $W$ is said to be bounded with respect to a seminorm $\norm\fdot$ on $W$ if there exists a constant $C>0$ such that
\[\abs{B(v,w)}\le C\norm v\norm w\qquad\text{for all }v,w\in W.\]
If $B$ is symmetric this is the case if and only if there exists a constant $C>0$ such that
\[\abs{B(v,v)}\le C\norm v^2\qquad\text{for all }v\in W.\]

In the following four results we assume that $V$ has finite interaction range. 
Indeed, bounds  with respect to the strong seminorm $\norm{\nabla_\RR\fdot}_2$ turn out rather straightforward.
\begin{Proposition}\label{Proposition:BoundedOne}
The bilinear form $E''(\xx)$ is bounded with respect to $\norm{\nabla_\RR\fdot}_2$.
\end{Proposition}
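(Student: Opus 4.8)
The plan is to expand $E''(\xx)$ via its explicit formula and to recognize that the vectors $\rot(h)u(gh)-u(g)$ are, up to an orthogonal factor, exactly the components of the discrete derivative $\nabla_\RR u$. Concretely, by Lemma~\ref{Lemma:FrechetE} together with Remark~\ref{Remark:VFrechet}\ref{item:RemarkVFrechet}, for $u\in\UPer$ that is $\T^N$-periodic one has
\[E''(\xx)(u,u)=\frac1{\abs{\CC_N}}\sum_{g\in\CC_N}\sum_{h_1,h_2\in\Gstar}\parens[\big]{\rot(h_2)u(gh_2)-u(g)}^{\mathsf T}\partial_{h_2}\partial_{h_1}V(y_0)\parens[\big]{\rot(h_1)u(gh_1)-u(g)}.\]
Let $\RR_V$ be a finite interaction range of $V$, so that $\partial_{h_2}\partial_{h_1}V(y_0)=0_{d,d}$ unless $h_1,h_2\in\RR_V$. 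For such $h_i$ I would use the identity $\rot(h_i)u(gh_i)-u(g)=\rot(h_i)\nabla_{\RR_V}u(g)(h_i)$ together with the orthogonality of $\rot(h_i)$, which gives $\norm{\rot(h_i)u(gh_i)-u(g)}=\norm{\nabla_{\RR_V}u(g)(h_i)}$.

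Since $E''(\xx)$ is symmetric it suffices to bound $\abs{E''(\xx)(u,u)}$. Estimating $\abs{a^{\mathsf T}Mb}\le\norm a\norm M\norm b$ (with the Frobenius norm) and applying $\norm a\norm b\le\tfrac12(\norm a^2+\norm b^2)$ termwise yields
\[\abs{E''(\xx)(u,u)}\le\frac1{\abs{\CC_N}}\sum_{g\in\CC_N}\sum_{h_1,h_2\in\RR_V}\norm{\partial_{h_2}\partial_{h_1}V(y_0)}\cdot\tfrac12\parens[\big]{\norm{\nabla_{\RR_V}u(g)(h_1)}^2+\norm{\nabla_{\RR_V}u(g)(h_2)}^2}.\]
Summing out the free index in each of the two symmetric contributions and invoking the summability hypothesis \ref{item:infty} in the form $C_V:=\sum_{h_1,h_2\in\Gstar}\norm{\partial_{h_2}\partial_{h_1}V(y_0)}<\infty$, each term is dominated by $\tfrac{C_V}{2}\norm{\nabla_{\RR_V}u(g)}^2$, so that $\abs{E''(\xx)(u,u)}\le C_V\norm{\nabla_{\RR_V}u}_2^2$.

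It remains to pass from the interaction range $\RR_V$ to the fixed reference set $\RR$. Because $\RR$ has Property~$\proptwo$ it generates $\G$, hence so does $\RR\cup\RR_V$; moreover $\RR_V\subset\RR\cup\RR_V$ gives $\norm{\nabla_{\RR_V}u}_2\le\norm{\nabla_{\RR\cup\RR_V}u}_2$, since the latter sums the same nonnegative terms together with additional ones. Theorem~\ref{Theorem:StrongerEquivalence}\ref{Theorem:nablaequivalent}, applied to the two finite generating sets $\RR\cup\RR_V$ and $\RR$, then furnishes a constant $C'>0$ with $\norm{\nabla_{\RR\cup\RR_V}u}_2\le C'\norm{\nabla_{\RR}u}_2$. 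Combining these bounds yields $\abs{E''(\xx)(u,u)}\le C_V(C')^2\norm{\nabla_{\RR}u}_2^2$ for all $u\in\UPer$, which is the asserted boundedness.

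The estimate itself is routine once the discrete-derivative structure is exposed; the only substantive input is the seminorm-equivalence Theorem~\ref{Theorem:StrongerEquivalence}\ref{Theorem:nablaequivalent}, needed to reconcile the potential's intrinsic interaction range $\RR_V$ with the a priori unrelated set $\RR$ defining $\norm{\nabla_{\RR}\fdot}_2$. The main point to watch is that the orthogonal factors $\rot(h)$ drop out cleanly under the norm, so that nothing beyond the raw summability of the partial Hessians is required.
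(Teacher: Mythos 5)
Your proof is correct and follows essentially the same route as the paper's: bound the quadratic form by a constant times $\norm{\nabla_{\RR_V}u}_2^2$ using the finite interaction range, then invoke Theorem~\ref{Theorem:StrongerEquivalence}\ref{Theorem:nablaequivalent} to pass from $\RR_V$ to $\RR$ (the paper phrases this as assuming $\RR_V\subset\RR$ without loss of generality). The only cosmetic difference is that you expand $V''(y_0)$ into partial Hessians and use the summability \ref{item:infty} directly, whereas the paper works with the equivalent bound $\abs{V''(y_0)(z,z)}\le C\norm{z|_{\RR_V}}^2$.
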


In view of our stability criteria to be discussed, the question arises, if such bounds can be established for the weaker seminorms $\norm\fdot_{\RR(,0,0)}$. 
Our first observation is that, under the assumption that the structure is an equilibrium configuration of each individual onsite potential given by $V$, \ie $V'(y_0)=0$ (recall that $y_0=(g\gdot x_0-x_0)_{g\in\Gstar}$), then $E''(\xx)$ is bounded even with respect to $\norm\fdot_\RR$. 
\begin{Theorem}\label{Theorem:BoundedThree}
Suppose that $V'(y_0)=0$.
Then $E''(\xx)$ is bounded with respect to $\norm\fdot_\RR$.
In particular we have $\lambdaa\in\R$ and $\lambdanewa\in\R$.
\end{Theorem}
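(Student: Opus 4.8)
The plan is to exploit the identity from Lemma~\ref{Lemma:PropertiesV}, which under the hypothesis $V'(y_0)=0$ reads $V''(y_0)(Sy_0,z)=0$ for every $S\in\Skew(d)$ and every $z\colon\Gstar\to\R^d$, together with the observation that the local discrete gradient sends the space of infinitesimally rigid displacements $\Uiso\RR$ precisely into such null directions $Sy_0$. This will let us discard the rigid part of a displacement locally and bound $E''(\xx)$ by the non-rigid remainder, which is exactly what $\norm\fdot_\RR$ measures. Since we already know from Proposition~\ref{Proposition:BoundedOne} that $E''(\xx)$ is bounded with respect to the strictly stronger norm $\norm{\nabla_\RR\fdot}_2$, the entire content of the theorem lies in replacing the full gradient by its rigidity-deficient component.

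First, by Theorem~\ref{Theorem:EquivalenceAll} we may assume without loss of generality that $\RR$ contains a finite interaction range $\RR_V\subset\Gstar$ of $V$ (note $\id\in\RR$ already by Property~$\proptwo$): enlarging $\RR$ preserves Property~$\proptwo$ and yields an equivalent seminorm, so boundedness transfers. By Lemma~\ref{Lemma:FrechetE}, for $\T^N$-periodic $u$ we have $E''(\xx)(u,u)=\frac1{\abs{\CC_N}}\sum_{g\in\CC_N}V''(y_0)(z_g,z_g)$ with $z_g=(\rot(h)u(gh)-u(g))_{h\in\Gstar}$. Because $V$ has interaction range $\RR_V$ and $\id,\RR_V\subset\RR$, the truncation $\chi_{\RR_V}z_g$ that enters $V''(y_0)$ is a fixed linear function $\Lambda v_g$ of the local restriction $v_g:=u(g\fdot)|_\RR$, namely $(\Lambda v_g)(h)=\rot(h)v_g(h)-v_g(\id)$ for $h\in\RR_V$ and $0$ otherwise; in particular $V''(y_0)(z_g,z_g)=V''(y_0)(\Lambda v_g,\Lambda v_g)$.

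The key algebraic step is to decompose $v_g=r_g+w_g$ with $r_g\in\Uiso\RR$ and $w_g=\pi_{\Uiso\RR}(v_g)$, and to evaluate $\Lambda$ on $\Uiso\RR=\Utrans\RR\oplus\Urot\RR$ (Proposition~\ref{Proposition:UtransUrotbig}). For a translational part ($\rot(h)r(h)\equiv a$) one finds $r(\id)=a$ and hence $\Lambda r=0$; for a rotational part with matrix $S\in\Skew(d)$ one has $r(\id)=0$ and $(\Lambda r)(h)=S(h\gdot x_0-x_0)$ for $h\in\RR_V$, that is $\Lambda r=\chi_{\RR_V}Sy_0$. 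Thus $\Lambda r_g=\chi_{\RR_V}S^{(g)}y_0$ for some $S^{(g)}\in\Skew(d)$, so that by Lemma~\ref{Lemma:PropertiesV} together with $V'(y_0)=0$ every term of $V''(y_0)(\Lambda v_g,\Lambda v_g)$ involving $\Lambda r_g$ vanishes, leaving $V''(y_0)(z_g,z_g)=V''(y_0)(\Lambda w_g,\Lambda w_g)$. As $\Lambda$ is a fixed linear map on the finite-dimensional space $(\R^d)^\RR$ and $V''(y_0)$ is bounded on the finite set $\RR_V$ as in \eqref{eq:eob}, we obtain $\abs{V''(y_0)(z_g,z_g)}\le C\norm{w_g}^2$. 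Summing over $g\in\CC_N$ and dividing by $\abs{\CC_N}$ gives $\abs{E''(\xx)(u,u)}\le C\norm u_\RR^2$, the asserted boundedness.

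The ``in particular'' is then immediate: by Proposition~\ref{Proposition:LambdaDual} we have $\lambdaa=\inf\set{E''(\xx)(u,u)}{\norm u_\RR=1}\in[-C,C]$, and since $\norm\fdot_\RR\le\newnorm\fdot\RR$ the same estimate gives $\abs{E''(\xx)(u,u)}\le C\newnorm u\RR^2$, whence $\lambdanewa\in[-C,C]$ as well. I expect the main obstacle to be the careful bookkeeping around the finite interaction range extension of $V''(y_0)$, so that Lemma~\ref{Lemma:PropertiesV} genuinely applies to $\Lambda r_g=\chi_{\RR_V}S^{(g)}y_0$; the substantive idea, however, is the geometric fact that the infinitesimally rigid displacements are mapped by the local gradient exactly onto the null directions $Sy_0$ of the onsite Hessian in equilibrium.
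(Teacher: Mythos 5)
Your proof is correct and follows essentially the same route as the paper: both subtract the locally rigid part of the displacement and use Lemma~\ref{Lemma:PropertiesV} together with $V'(y_0)=0$ to show that this part contributes nothing to $V''(y_0)$, leaving a term controlled by the rigidity-deficient remainder. The only (cosmetic) difference is that you project the local restriction $u(g\fdot)|_\RR$ onto the complement of $\Uiso{\RR}$ and obtain $\norm{u}_\RR$ directly, whereas the paper projects the discrete gradient onto the complement of $\Urot{\RR}$, arrives at $\nablanorm{u}{\RR}$, and then invokes Theorem~\ref{Theorem:EquivalenceAll} to pass to $\norm{u}_\RR$.
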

We proceed to consider general critical points, first when $\G$ is finite or a space group. In these cases we obtain boundedness with respect to the weak seminorm $\norm\fdot_\RR$. 
\begin{Theorem}\label{Theorem:FiniteSpace}
\begin{enumerate}
\item\label{Theorem-item:Finite} 
Suppose that $\G$ is finite and $E'(\xx)=0$.
Then $E''(\xx)$ is bounded with respect to $\norm\fdot_\RR=\newnorm\fdot\RR$.
In particular, we have $\lambdaa=\lambdanewa\in\R$.
\item\label{Theorem-item:Space}
Suppose that $\G$ is a space group.
Then $E''(\xx)$ is bounded with respect to both $\norm\fdot_\RR$ and $\newnorm\fdot\RR$.
In particular, we have $\lambdaa,\lambdanewa\in\R$.
\end{enumerate}
\end{Theorem}

If $\G$ is neither finite nor a space group, the boundedness of $E''(\xx)$ is non-trivial in general. While an estimate with respect to $\newnorm\fdot\RR$ is straightforward if $d_1=1$, the case $d_1\ge2$ is rather demanding. 

\begin{Theorem}\label{Theorem:dtwodtwo}
Suppose that $E'(\xx)=0$ and $E''(\xx)$ is positive semidefinite.
Then $E''(\xx)$ is bounded with respect to $\newnorm\fdot\RR$. In particular, we have $\lambdanewa\in\R$.
\end{Theorem}
\begin{Remark}\label{Remark:Hessianbounds}
\begin{enumerate}
\item\label{Eprimezerofinite}Example~\ref{Example:DEUiso} below shows that the condition $E'(\xx)=0$ in Theorem~\ref{Theorem:FiniteSpace}\ref{Theorem-item:Finite} cannot be dropped. 
\item\label{Remarkitem:donedtwo}
The proof of Theorem~\ref{Theorem:dtwodtwo} shows that the assumptions $E'(\xx)=0$ and that $E''(\xx)$ is positive semidefinite can be dropped if $d=1+d_2$. 
Example~\ref{Example:counterV} below shows that here the assumption of a finite interaction range is essential. 
\end{enumerate}
\end{Remark}

\subsubsection*{Representation formulae for stability constants} 

We recall the definition of $\ff$ and $\gggg,\newgggg$ from Definition~\ref{Definition:eeff} and~\ref{Definition:gggg}, respectively. Since $E''(\xx)$, $\norm\fdot_\RR$ and $\newnorm\fdot\RR$ are left-translation invariant, see Remark~\ref{Remark:newee}\ref{item:abdefd}, they can be represented by means of multiplier operators as 
\begin{align}\label{eq:E-and-norm-as-conv}
E''(\xx)(u,v)=\angles{\ff*v_0,u_0} 
\qquad\text{and}\qquad 
\norm u_{\RR(,0,0)}=\norm{{g_{\RR(,0,0)}}*u_0}_2,
\end{align}
where $u_0=u(\fdot^{-1})$ and $v_0=v(\fdot^{-1})$, see Lemma~\ref{Lemma:onetwoL} below. This directly leads to a first characterization of the stability constants $\lambdaa$ and $\lambdanewa$ in the Fourier transform domain. 
Recall that $\EE$ is a representation set of $\set{\rho\in\dual\G}{\rho\text{ is periodic}}$.
We write 
\begin{align}\label{eq:lambdamindef}
 \lambdamin(A,B):=\sup\set[\big]{c\in\R}{cB^{\mathsf H}B\le A}\in\R\cup\{\pm\infty\}
\end{align}
for all Hermitian matrices $A\in\C^{n\times n}$ and matrices $B\in\C^{m\times n}$. Here $\le$ denotes the \emph{Loewner order} on the Hermitian $n\times n$ matrices defined by $A\ge A'$ if $A-A'$ is positive semidefinite.
\begin{Theorem}\label{Theorem:LambdaIrreducible}
We have
\begin{align*}
&\lambdaa=\inf\set[\Big]{\lambdamin\parens[\Big]{\fourier\ff(\rho),\fourier\gggg(\rho)}}{\rho\in\EE}
\shortintertext{and}
&\lambdanewa=\inf\set[\Big]{\lambdamin\parens[\Big]{\fourier\ff(\rho),\fourier\newgggg(\rho)}}{\rho\in\EE}.
\end{align*}
\end{Theorem}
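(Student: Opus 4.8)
The plan is to start from the complex-valued characterization in Lemma~\ref{Lemma:complexvalued}, which expresses $\lambdaa$ as the supremum of those $c\in\R$ for which $c\norm{\gggg*u}_2^2\le\angles{\ff*u,u}$ holds for all $u\in\UPerC$, and then to diagonalize both sides via the Fourier transform so that this single global inequality decouples into one inequality per representation $\rho\in\EE$.

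First I would apply the Plancherel formula (Proposition~\ref{Proposition:TFplancherelmatrix}) together with the convolution theorem (Lemma~\ref{Lemma:Convolution}) to rewrite
\[\angles{\ff*u,u}=\sum_{\rho\in\EE}d_\rho\angles{\fourier\ff(\rho)\fourier u(\rho),\fourier u(\rho)}\qquad\text{and}\qquad\norm{\gggg*u}_2^2=\sum_{\rho\in\EE}d_\rho\norm{\fourier\gggg(\rho)\fourier u(\rho)}^2,\]
where only finitely many summands are nonzero because $u$ is periodic. Since $\fourier\ff(\rho)$ is Hermitian by Lemma~\ref{Lemma:rft}, each term $\angles{\fourier\ff(\rho)\fourier u(\rho),\fourier u(\rho)}$ is real and, writing the columns of $\fourier u(\rho)\in\C^{(dd_\rho)\times d_\rho}$ as $a_1,\dots,a_{d_\rho}$, equals $\sum_j a_j^{\mathsf H}\fourier\ff(\rho)a_j$; likewise $\norm{\fourier\gggg(\rho)\fourier u(\rho)}^2=\sum_j\norm{\fourier\gggg(\rho)a_j}^2$. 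Thus the global inequality $c\norm{\gggg*u}_2^2\le\angles{\ff*u,u}$ becomes a weighted sum, over $\rho$ and over columns, of the scalar inequalities $c\norm{\fourier\gggg(\rho)a_j}^2\le a_j^{\mathsf H}\fourier\ff(\rho)a_j$.

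Next I would argue that the global inequality holds for all $u\in\UPerC$ if and only if the pointwise inequality $c\norm{\fourier\gggg(\rho)a}^2\le a^{\mathsf H}\fourier\ff(\rho)a$ holds for every $\rho\in\EE$ and every $a\in\C^{dd_\rho}$. The implication from pointwise to global is immediate by summation. For the converse, given $\rho_0\in\EE$ and $a\in\C^{dd_{\rho_0}}$, I would invoke the surjectivity part of Proposition~\ref{Proposition:TFplancherelmatrix} to construct a periodic $u$ whose Fourier transform is supported at $\rho_0$ alone, with first column equal to $a$ and all remaining entries zero (exactly the construction used in the preceding Corollary); feeding this $u$ into the global inequality isolates the single term $c\,d_{\rho_0}\norm{\fourier\gggg(\rho_0)a}^2\le d_{\rho_0}\,a^{\mathsf H}\fourier\ff(\rho_0)a$, as desired. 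By Definition~\ref{Definition:Loewner} and its dual-problem description, the pointwise inequality holding for all $a$ is precisely the statement $c\le\lambdamin(\fourier\ff(\rho),\fourier\gggg(\rho))$, so the admissible constants form the set $\set{c\in\R}{c\le\lambdamin(\fourier\ff(\rho),\fourier\gggg(\rho))\text{ for all }\rho\in\EE}$. Taking the supremum yields $\lambdaa=\inf_{\rho\in\EE}\lambdamin(\fourier\ff(\rho),\fourier\gggg(\rho))$, and the argument for $\lambdanewa$ is verbatim with $\newgggg$ in place of $\gggg$.

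I expect the main obstacle to be bookkeeping rather than conceptual: one must track the inner-product and conjugation conventions carefully so that $\angles{\fourier\ff(\rho)\fourier u(\rho),\fourier u(\rho)}$ is genuinely real and matches $\sum_j a_j^{\mathsf H}\fourier\ff(\rho)a_j$ (this is where Hermiticity from Lemma~\ref{Lemma:rft} is indispensable), and one must handle the degenerate cases $\fourier\gggg(\rho)a=0$ cleanly so that the identification with $\lambdamin$ remains valid even when some $\lambdamin$ takes the value $\pm\infty$. The column-wise reduction and the use of Fourier surjectivity to realize a single representation are the crux that converts the infimum over $\EE$ into the correct stability constant.
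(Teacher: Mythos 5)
Your proposal is correct and follows essentially the same route as the paper's own proof: both start from Lemma~\ref{Lemma:complexvalued}, use the Plancherel formula and the convolution theorem to decompose $\angles{\ff*u,u}$ and $\norm{\gggg*u}_2^2$ column-wise over $\rho\in\EE$, and obtain the reverse inequality by constructing a test function whose Fourier transform is supported at a single representation with prescribed first column. The only difference is presentational (you package the two inequalities as an equivalence of admissible-constant sets, the paper proves $\lambdaa\le\mathrm{RHS}$ and $\lambdaa\ge\mathrm{RHS}$ separately), and your attention to Hermiticity via Lemma~\ref{Lemma:rft} and to the degenerate $\pm\infty$ cases matches the paper's treatment.
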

Theorem~\ref{Theorem:LambdaIrreducible} is an abstract representation result in the sense that $\EE$ might not be easily accessible in concrete examples. For a meaningful and applicable generalization of the representation results in \cite{Hudson2011,Braun2016} from lattices to general objective structures, it is essential to observe that one may reduce to representations that are induced by representations $\rho$ of $\T\F$, which in turn are characterized in terms of suitable wave vector domains $K_\rho$. More precisely, we first fix a complete set of representatives of the cosets of $\T\F$ in $\G$ such that $\Ind\rho$ is well-defined for all $\rho$ by Definition~\ref{Definition:IndRep}. We recall the equivalence relation $\simrg$ on $\dual{\T\F}$ from  Definition~\ref{Definition:ChiK-L}\ref{Definition:RelationTF}, choose a finite representation set of $\dual{\T\F}/{\simrg}$, \eg with the aid of the finite group $(\T\F)_{m_0}$ and Lemma~\ref{Lemma:RepSys}, and recall the $d_2$-dimensional space groups $\G_\rho$ for $\rho\in\dual{\T\F}$ from Definition~\ref{Definition:Grho} and Proposition~\ref{Proposition:SpaceGroupLattice}. 
\begin{Theorem}\label{Theorem:LambdaInduced}
Let $R$ be a representation set of a representation set of $\dual{\T\F}/{\simrg}$.
For all $\rho\in R$ let $K_\rho\subset\R^{d_2}$ such that its closure contains a representation set of $\R^{d_2}/\G_\rho$.
Then 
\begin{align*}
&\lambdaa=\inf\set[\Big]{\lambdamin\parens[\Big]{\fourier\ff(\Ind(\chi_k\rho)),\fourier\gggg(\Ind(\chi_k\rho))}}{\rho\in R,k\in K_\rho}
\shortintertext{and}
&\lambdanewa=\inf\set[\Big]{\lambdamin\parens[\Big]{\fourier\ff(\Ind(\chi_k\rho)),\fourier\newgggg(\Ind(\chi_k\rho))}}{\rho\in R,k\in K_\rho}.
\end{align*}
\end{Theorem}
\begin{Remark}\label{Remark:Python}
By means of the dual problem we have
\begin{align*}
\lambdamin(A,B)=\inf\set[\big]{x^{\mathsf H}Ax}{x\in\C^n,\norm{Bx}=1}
\end{align*}
for all Hermitian matrices $A\in\C^{n\times n}$ and matrices $B\in\C^{m\times n}\setminus\{0\}$.
Suppose that $B$ has in addition rank $n$ and consider the \emph{generalized} eigenvalue problem $Av=\lambda B^{\mathsf H}Bv$, \ie the problem of finding the \emph{eigenvalues} of the \emph{matrix pencil} $A-\lambda B^{\mathsf H}B$.
Then the eigenvalues of the generalized eigenvalue problem are real and $\lambdamin(A,B)$ is equal to the smallest one, see \cite[Chapter X, Theorem 11]{Gantmacher1998}.
The eigenvalues of the generalized eigenvalue problem are equal to the eigenvalues of the matrix $A(B^{\mathsf H}B)^{-1}$, see \cite[Proposition 6.1.1]{Watkins2007}, but the eigenvalues of $A(B^{\mathsf H}B)^{-1}$ are ill-conditioned. % spectrum(AB)=spectrum(BA) (Sylvester's determinant identity)
There exist many numerically stable algorithms, see, \eg, \cite[Chapter 5]{Bai2000}, and thus many programming languages have a function for this problem; \eg for Python the subpackage linalg of the package SciPy has the function eigvalsh.
\end{Remark}
We finally state the following theorem which gives a sufficient condition for a stable structure to be a minimum point of $E$ in case $\G$ is finite, a space group, or $d_1=1$.
\begin{Theorem}\label{Theorem:local-min}
Suppose that $d_1\in\{0,1,d\}$, $V$ has finite interaction range, $\ee=0$ and $\lambdanewa>0$.
Then $E$ has a local minimum point at $\xx$ with respect to $\norm\fdot_\infty$, \ie there exists a neighborhood $U\subset\UPer$ of $0$ with respect to $\norm\fdot_\infty$ such that
\[E(\xx+u)\ge E(\xx)\qquad\text{for all }u\in U.\]
\end{Theorem}
\begin{Remark}
The proof will show that in case $d_1\in\{0,1\}$ there even exists a neighborhood $U\subset\UPer$ of $\xx$ with respect to $\norm\fdot_\infty$ such that
\[E(\xx+u)\ge E(\xx)+\tfrac\lambdanewa{4}\newnorm u\RR^2\qquad\text{for all }u\in U.\]
\end{Remark}
%
%%%%%%%%%%%%%%%%%%%%%
%
\section{A stability algorithm and applications}%\label{Section:Algorithm}
\subsection{The general algorithm}
In view of our main results we can now give an algorithm which checks if $(\G, x_0, V)$ is stable with respect to $\norm\fdot_\RR$, see Definition~\ref{Definition:Stable}.
The algorithm for the stability with respect to $\newnorm\fdot\RR$ is analogous.
\begin{Algorithm}\label{Algorithm:algorithm}
Given is a discrete group $\G<\E(d)$ and its associated groups $\F$, $\SG$ and set $\T$, some point $x_0\in\R^d$, the stabilizer subgroup $\G_{x_0}$, and an interaction potential $V$, see Definition~\ref{Definition:potential}.
Since the algorithm is numeric and by \ref{item:infty}, we may assume that $V$ has finite interaction range $\RRVset/\G_{x_0}$.
\begin{enumerate}
\item\label{item:A1} Check if $\xx$ is a critical point of the configurational energy $E$, \eg by computing the derivative $\partial_g V(y_0)$ for all $g\in\supp V$, see Definition~\ref{Definition:potential}, the vector $\ee$, see Definition~\ref{Definition:eeff}, and checking if $\ee=0$, see Lemma~\ref{Lemma:energyderivativeY}.
\item\label{item:A2} Determine the derivative $\partial_g\partial_h V(y_0)$ for all $g,h\in\supp V$, see Definition~\ref{Definition:potential}.
Then compute the function $f_V$ by computing $f_V(g)$ for all $g\in\RRVset^{-1}\RRVset\cup\RRVset^{-1}\cup\RRVset$, see Definition~\ref{Definition:eeff} and Remark~\ref{Remark:ee}\ref{item:eefinite}.
\item\label{item:A3} Determine an admissible $\id$-neighborhood $\RR$, see Definition~\ref{Definition:Property}.
Fix a bijection $\phi\colon\RR\to\{0,\dots,\abs{\RR}-1\}$.
Thus the map
\[\psi\colon\Uiso\RR\hookrightarrow\R^{d\abs\RR},\quad u\mapsto (u(\phi^{-1}(0)),\dots,u(\phi^{-1}(\abs\RR-1)))^{\mathsf T},\]
which maps a function to a column vector, is an embedding, where $\Uiso\RR$ is defined in Definition~\ref{Definition:UtransUrot}.
By Proposition~\ref{Proposition:UtransUrotbig} and the Gram-Schmidt process, we can determine an orthonormal basis $\{b_1,\dots,b_n\}$ of $\psi(\Uiso\RR)$, where $n=\dim(\Uiso\RR)$.
Let $B$ be the $d\abs\RR\times n$ matrix $(b_1,\dots,b_n)$.
The matrix $I_{d\abs\RR}-BB^{\mathsf T}$ is the orthogonal projection matrix with kernel $\psi(\Uiso\RR)$.
Now we can determine the function $\gggg$, \ie the matrix $\gggg(g)$ for all $g\in\RR$, see Definition~\ref{Definition:gggg} and Remark~\ref{Remark:newee}\ref{Remark-item:gggg}.
\item\label{item:A4} Determine a representation set $R$ of $\dual{\T\F}/{\simrg}$, \eg with Lemma~\ref{Lemma:RepSys}, where $\simrg$ is the equivalence relation defined in Definition~\ref{Definition:ChiK-L}\ref{Definition:RelationTF}.
For all $\rho\in R$ determine the space group $\G_\rho$, see Definition~\ref{Definition:Grho}, with, \eg, Proposition~\ref{Proposition:SpaceGroupLattice}, and determine $K_\rho\subset\R^{d_2}$ as a fundamental domain (or merely a set whose closure contains a representation set) of $\R^{d_2}/\G_\rho$.
\item\label{item:A5} Fix a complete set of representatives of the cosets of $\T\F$ in $\G$.
Thus the induced representation $\Ind(\chi_k\rho)$ is well-defined for all $\rho\in R$ and $k\in K_\rho$, see Definitions~\ref{Definition:ChiK-L}\ref{Definition:ChiK} and~\ref{Definition:IndRep}.
For all $\rho\in R$ and $k\in K_\rho$ the matrices $\fourier\ff(\Ind(\chi_k\rho))$ and $\fourier\gggg(\Ind(\chi_k\rho))$, see Definition~\ref{Definition:FourierLOne}, and the extended real number $\lambdamin(\fourier\ff(\Ind(\chi_k\rho)),\fourier\gggg(\Ind(\chi_k\rho)))$, see~\eqref{eq:lambdamindef}, can be determined.
Theorem~\ref{Theorem:LambdaInduced} then allows to compute the extended real number $\lambdaa$.

If $\G_{x_0}$ is trivial, the following is helpful for the computation of $\lambdaa$: Then the dimension of the kernel of $\norm\fdot_\RR$ is finite and, if the $K_\rho$ are fundamental domains, for all $\rho\in R$ and all but finitely many $k\in K_\rho$, the matrix $\fourier\gggg(\Ind(\chi_k\rho))$ has full rank and thus $\lambdamin(\fourier\ff(\Ind(\chi_k\rho)),\fourier\gggg(\Ind(\chi_k\rho)))$ is even a real number and can easily be computed, see Remark~\ref{Remark:Python}.
\item\label{item:A6} The triple $(\G,x_0, V)$ is stable with respect to $\norm\fdot_\RR$ if and only if $\xx$ is a critical point of $E$ and $\lambdaa>0$, see Definition~\ref{Definition:Stable}.
\end{enumerate}
\end{Algorithm}
In the following two examples, we investigate the stability of a triple $(\G_i,x_i,V_i)$ for all $i\in I$, where $I$ is a suitable index set.
The figures are generated with the programming language Python, see \url{https://github.com/Toymodel-Nanotube/} for the source code.
\subsection{Example: a chain of atoms}
\begin{Example}\label{Example:ToyModel}
A suitable toy model for the investigation of stability is an atom chain.

Let $a>0$ be the scale factor, $t=t_a=\iso{I_2}{ae_2}\in\E(2)$ and $\G=\G_a=\angles t<\E(2)$.
We define the interaction potential $V = V_a$, see Definition~\ref{Definition:potential} and Remark~\ref{Remark:VFrechet}\ref{item:RemarkVDomain}-\ref{item:RemarkGTrivial}, by
\[V_a(y)=v_1(\norm{y(t_a)})+v_2(\norm{y(t_a^2)}),\]
where
\begin{align*}
&v_1\colon(0,\infty)\to\R,\quad r\mapsto r^{-12}-r^{-6}
\intertext{is the Lennard-Jones potential and}
&v_2\colon(0,\infty)\to\R,\quad r\mapsto 8r^{-6}.
\end{align*}
Let $x_0=0_2$.
The stabilizer subgroup $\G_{x_0}$ is trivial.
By Lemma~\ref{Lemma:FrechetE} for all $a>0$ we have
\[E(\xx)=V(y_0)=a^{-12}-\frac78a^{-6},\]
where $E=E_a$ is the configurational energy and $y_0=y_{0,a}=(g\cdot x_0-x_0)_{g\in\G_a}$.
We define
\[a^* :=\argmin_{a\in(0,\infty)}E(\xx)=\sqrt[6]{\frac{16}7}\approx 1.1477.\]
Thus the structure $\G\cdot x_0$ is stretched (resp.\ compressed) if $a>a^*$ (resp.\ $a<a^*$).
Now we investigate its stability numerically with Algorithm~\ref{Algorithm:algorithm}.
\begin{enumerate}
\item[\ref{item:A1}] Since $\G<\{I_d\}\ltimes\R^d$ we have $\ee=0$ and thus $\xx$ is a critical point of $E$ for all $a>0$ by Lemma~\ref{Lemma:energyderivativeY}.
\item[\ref{item:A2}] We have
\begin{align*}
\partial_g\partial_h V(y_0)=\begin{cases}
6a^{-8}\parens[\bigg]{\begin{matrix}-2a^{-6}+1&0\\0&26a^{-6}-7\end{matrix}} & \text{if }g=h=t\\
2^{-4}3a^{-8}\parens[\bigg]{\begin{matrix}-1&0\\ 0&7\end{matrix}}& \text{if }g=h=t^2\\
0_{2,2}& \text{else.}
\end{cases}
\end{align*}
We have $(\{\id\}\cup\supp V)^{-1}(\{\id\}\cup\supp V)=\{t^{-2},\dots,t^2\}$ and
\begin{align*}
\ff(g)=\begin{cases}
a^{-8}\parens[\bigg]{\begin{matrix}-24a^{-6}+93/8&0\\ 0&312a^{-6}-651/8\end{matrix}}& \text{if }g=\id\\
6a^{-8}\parens[\bigg]{\begin{matrix}2a^{-6}-1&0\\0&-26a^{-6}+7\end{matrix}} & \text{if }g\in\{t^{-1},t\}\\
2^{-4}3a^{-8}\parens[\bigg]{\begin{matrix}1&0\\ 0&-7\end{matrix}}& \text{if }g\in\{t^{-2},t^2\}\\
0_{2,2}& \text{else.}
\end{cases}
\end{align*}
\item[\ref{item:A3}] Since $\aff(\{\id, t\}\cdot x_0)=\{0\}\times\R=\aff(\G\cdot x_0)$ and $\{t\}$ generates $\G$, the set $\RR=\{\id, t, t^2\}$ is an admissible $\id$-neighborhood.
We define the functions
\begin{align*}
&b_i\colon\RR\to\R^2,\quad g\mapsto e_i&&\text{for all }i\in\{1, 2\}
\shortintertext{and}
&b_3\colon\RR\to\R^2,\quad g\mapsto \parens[\Big]{\begin{smallmatrix}0&-1\\1&0\end{smallmatrix}}(g\cdot x_0-x_0).
\end{align*}
By Proposition~\ref{Proposition:UtransUrotbig} the sets $\{b_1,b_2,b_3\}$ and $\{b_1,b_2\}$ are bases of $\Uiso\RR$ and of $\Unewiso\RR$, respectively.
We define the bijection $\phi\colon\RR\to\{0,1,2\}$ by $t^n\mapsto n$ for all $n\in\{0,1,2\}$.
Let $\psi$ be the embedding 
\[\Uiso\RR\hookrightarrow\R^6,\quad u\mapsto(u(\phi^{-1}(0)),\dots,u(\phi^{-1}(2))).\]
A computation shows that the orthogonal projection matrices of $\R^6$ with kernels $\psi(\Uiso\RR)$ and $\psi(\Unewiso\RR)$ are
\[\frac16\left(\begin{matrix}
1&0&-2&0&1&0\\
0&4&0&-2&0&-2\\
-2&0&4&0&-2&0\\
0&-2&0&4&0&-2\\
1&0&-2&0&1&0\\
0&-2&0&-2&0&4
\end{matrix}\right)\quad \text{and}\quad\frac13\left(\begin{matrix}
2&0&-1&0&-1&0\\
0&2&0&-1&0&-1\\
-1&0&2&0&-1&0\\
0&-1&0&2&0&-1\\
-1&0&-1&0&2&0\\
0&-1&0&-1&0&2
\end{matrix}\right),\]
respectively.
Thus the functions $\gggg$ and $\newgggg$ of Definition~\ref{Definition:gggg} are given by
\begin{alignat*}{3}
&\supp\gggg=\supp\newgggg=\RR,\\
&\gggg(\id)=\frac16\left(\begin{matrix}
1&0\\
0&4\\
-2&0\\
0&-2\\
1&0\\
0&-2
\end{matrix}\right),
&&\gggg(t)=\frac16\left(\begin{matrix}
-2&0\\
0&-2\\
4&0\\
0&4\\
-2&0\\
0&-2
\end{matrix}\right),
&&\gggg(t^2)=\frac16\left(\begin{matrix}
1&0\\
0&-2\\
-2&0\\
0&-2\\
1&0\\
0&4
\end{matrix}\right)
\intertext{and}
&\newgggg(\id)=\frac13\left(\begin{matrix}
2&0\\
0&2\\
-1&0\\
0&-1\\
-1&0\\
0&-1
\end{matrix}\right),\;\,
&&\newgggg(t)=\frac13\left(\begin{matrix}
-1&0\\
0&-1\\
2&0\\
0&2\\
-1&0\\
0&-1
\end{matrix}\right),\;\,
&&
\newgggg(t^2)=\frac13\left(\begin{matrix}
-1&0\\
0&-1\\
-1&0\\
0&-1\\
2&0\\
0&2
\end{matrix}\right).
\end{alignat*}
\item[\ref{item:A4}] We have $\G=\T\F=\angles t$, $\MM=\N$ and $\{\id\}$ is a representation set of $\dual{\T\F}/{\simrg}$ by Lemma~\ref{Lemma:RepSys}\ref{item:aaa}.
We have $\SG=\angles{\iso{I_1}a}$, $\LL=\angles a$ and $\dualL=\angles{a^{-1}}$, see Definition~\ref{Definition:ChiK-L}\ref{Definition:LLst}.
By Proposition~\ref{Proposition:SpaceGroupLattice} we have $\set{k\in\R}{\iso{I_1}k\in\G_\id}=\angles{a^{-1}}$ and thus $\G_\id=\angles{\iso{I_1}{a^{-1}}}$.
The interval $K_\id=[0,a^{-1})$ is a representation set of $\R/\G_\id$.
\item[\ref{item:A5}] For all $k\in K_\id$ we have $\Ind_{\T\F}^\G \chi_k = \chi_k$.
Both $\gggg$ and $\newgggg$ satisfy
\begin{align*}
\set{k\in K_\id}{\fourier{g_{\RR(,0,0)}}(\chi_k)\text{ has full rank}}=K_\id\setminus\{0\}.
\end{align*}
The numbers 
$\lambdamin\parens{\fourier\ff(\chi_k),\fourier{g_{\RR(,0,0)}}(\chi_k)}$ can then be computed for all $k\in K_\id\setminus\{0\}$.
In particular, we can compute $\lambdaa=\lambdaa(a)$ and $\lambdanewa=\lambdanewa(a)$ numerically, see Figure~\ref{fig:ToyModel}.
\begin{figure}
    \centering
    \begin{subfigure}{0.48\textwidth}
        \includegraphics[width=\textwidth]{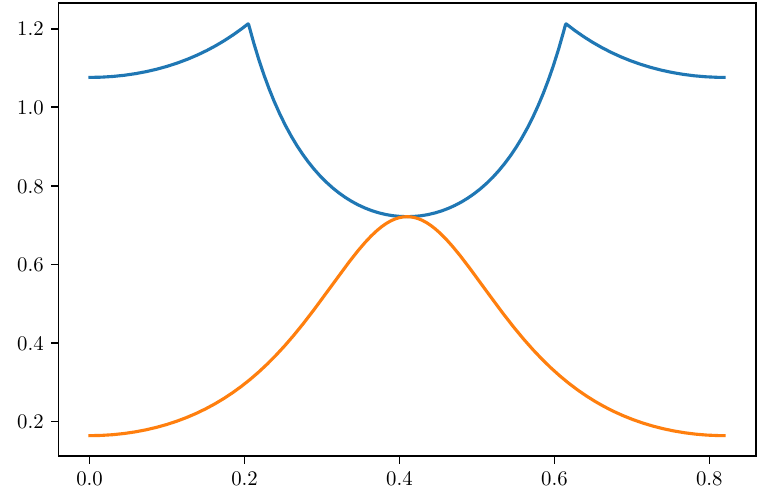}
    \end{subfigure}
    ~ 
    \begin{subfigure}{0.48\textwidth}
        \includegraphics[width=\textwidth]{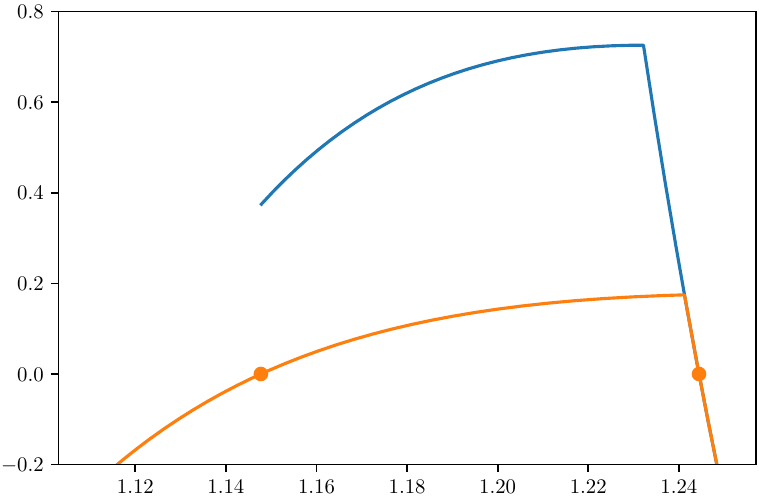}
    \end{subfigure}
    \caption{For the toy model as described in Example~\ref{Example:ToyModel}, the graphs of the numbers $\lambdamin\parens{\widehat{\ff}(\chi_k),\widehat{\gggg}(\chi_k)}$ (blue) and $\lambdamin\parens{\widehat{\ff}(\chi_k),\widehat{\newgggg}(\chi_k)}$ (orange) dependent on $k\in K_\id\setminus\{0\}$ are plotted on the left for the choice $a=1.22$. The points $(a^*,0)$ and $(a^{**},0)$  and the graphs of $\lambdaa$ (blue) and $\lambdanewa$ (orange) dependent on the scale factor are plotted on the right.}\label{fig:ToyModel}
\end{figure}
\item[\ref{item:A6}] In the compressed case $a\in(0,a^*)$ we have $\lambdaa=-\infty$ and $\lambdanewa\in(-\infty,0)$ and thus $(\G,x_0,V)$ is not stable with respect to both $\norm\fdot_\RR$ and $\newnorm\fdot\RR$.
Now we investigate the stretched case, \ie $a>a^*$.
We consider the `period doubling mode' $u=\chi_{\T^2}e_2$ and let $a^{**}>0$ such that $E''(\xx)(u,u)=0$, \ie $a^{**}=\sqrt[6]{26/7}\approx1.244455$.
Indeed for all $a\in(a^*,a^{**})$ we have $\lambdaa>0$ and $\lambdanewa>0$ and thus $(\G,x_0,V)$ is stable with respect to both $\norm\fdot_\RR$ and $\newnorm\fdot\RR$.
For all $a>a^{**}$ we obtain $E''(\xx)(u, u)<0$ and we have $\lambdaa<0$ and $\lambdanewa<0$ and thus $(\G,x_0,V)$ is not stable with respect to both $\norm\fdot_\RR$ and $\newnorm\fdot\RR$.
In particular, the loss of stability beyond $a^{**}$ is seen to result from a period doubling deformation mode. 
\end{enumerate}
Notice that in the stretched case $a\in(a^*,a^{**})$, the appropriate seminorm for the stability is $\newnorm\fdot\RR$. 
For the equilibrium case $a\approx a^*$, the weaker seminorm $\norm\fdot_\RR$ is appropriate since $\lim_{a\to a^*}\lambdanewa=0$ and $\lim_{a\searrow a^*}\lambdaa>0$.
\end{Example}
\subsection{Example: a carbon nanotube}
\begin{Example}\label{Example:Nanotube}
We now consider a carbon nanotube with non-trivial chirality. Single-walled carbon nanotubes are classified by an integer pair $(n,m)$ depending on the winding direction if the tube is visualized as a rolled-up graphene sheet. 
While there is a considerable amount of literature on the stability of the achiral zigzag (of type $(n,0)$) and armchair (of type $(n,n)$) variants, see in particular \cite{Friedrich2019} and the references therein, general chiral tubes are much less understood. To the best of our knowledge, the following analysis provides the first rigorous approach for an achiral case.

Since for any pair $(n,m)$ the nanotube is the orbit of some point in $\R^3$ under the action of a discrete subgroup of $\E(3)$, our stability analysis applies. 
By way of example we will now investigate the stability of a $(5,1)$ nanotube, see Figure~\ref{fig:(5,1)_Nanotube} with Algorithm~\ref{Algorithm:algorithm}.
\begin{figure}
    \centering
        \includegraphics[width=0.7\textwidth]{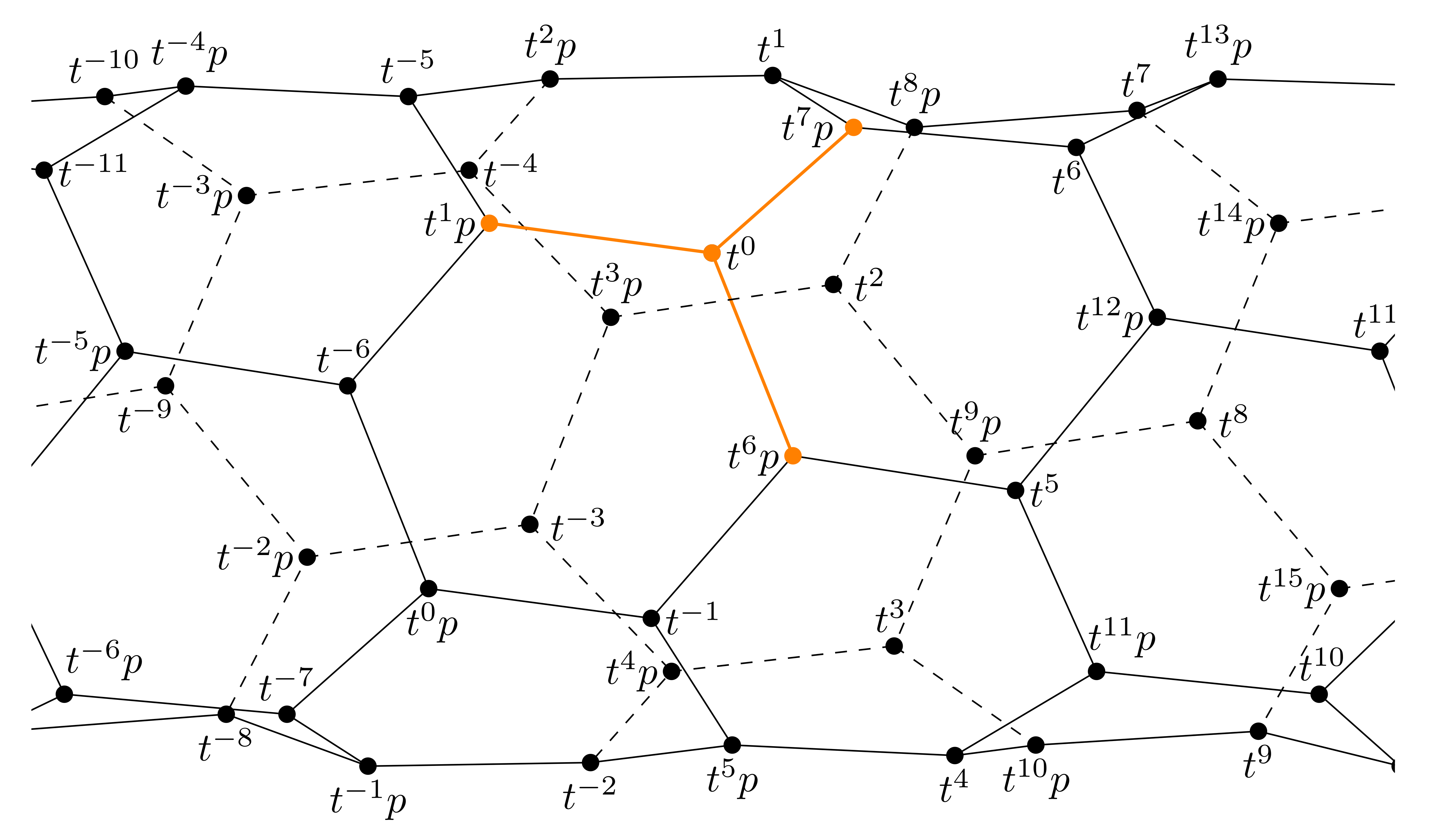}
    \caption{As described in Example~\ref{Example:Nanotube}, the orbit of the point $x_{a_0}$ under the action of the group $\G_{a_0,\alpha_0}$ is a $(5,1)$ nanotube. We have a natural bijection between the group elements and the atoms. $x_{a_0}$ and its nearest neighbor bonds to atoms in $\NN\cdot x_{a_0}$ are highlighted.}\label{fig:(5,1)_Nanotube}
\end{figure}
For all scale factors $a>0$ and angles $\alpha\in(0,\pi)$ we define:
Let $R(\alpha)\in\O(2)$ be the rotation matrix $R(\alpha):=\parens[\bigg]{\begin{matrix}\mbox{\footnotesize $\cos(\alpha)$}&\mbox{\footnotesize $-\sin(\alpha)$}\\ \mbox{\footnotesize $\sin(\alpha)$}&\mbox{\footnotesize $\cos(\alpha)$}\end{matrix}}$, $t=t_{a, \alpha}=\iso{R(\alpha)\oplus I_1}{ae_3}\in\E(3)$, $p=\iso{I_1\oplus(-I_2)}{0_3}\in\E(3)$ and $\G=\G_{a, \alpha}$ be the discrete group $\angles{t, p}<\E(3)$, \ie $\G=\set{t^mp^q}{m\in\Z, q\in\{0,1\}}$.
For all $x\in\R^3$ we have $\G\cdot x\subset C_x$, where $C_x$ is the cylinder $\set{y\in\R^3}{y_1^2+y_2^2=x_1^2+x_2^2}$.

Let $\NN=\NN_{a,\alpha}=\{tp, t^6p, t^7p\}$.
Let $U_{a, \alpha}\subset\R^3$ be the set of all points $x\in\R^3$ for which the stabilizer subgroup $\G_x$ is trivial and the three nearest neighbors of $x$ in $\G\cdot x$ are the points $\NN\cdot x$, \ie
\[ \sup\set[\Big]{\norm{g\cdot x - x}}{g\in\NN}<\inf\set[\Big]{\norm{g\cdot x - x}}{g\in\G\setminus(\NN\cup\{\id\})}.\]
Let
\[W:=\set[\big]{(a,\alpha,x)}{a>0,\alpha\in(0,\pi),x\in U_{a,\alpha}}.\]

Analogously to \cite{Friedrich2019} we define the interaction potential $V = V_{a, \alpha}$, see Definition~\ref{Definition:potential} and Remark~\ref{Remark:VFrechet}\ref{item:RemarkVDomain}-\ref{item:RemarkGTrivial}, by
\[V(y)=\frac12\sum_{g\in\NN}v_1(\norm{y(g)}) + \frac12\sum_{g,h\in\NN}v_2(y(g), y(h)),\]
where
\begin{align*}
&v_1\colon(0,\infty)\to\R,\quad r \mapsto (r-1)^2
\intertext{is a \emph{two-body potential} and}
&v_2\colon\set{(x,y)}{x,y\in\R^3\setminus\{0\}}\to\R,\quad (x, y)\mapsto\parens[\bigg]{\frac{\scalar xy}{\norm x\norm y}+\frac12}^2
\end{align*}
is a \emph{three-body potential}.
Thus the \emph{bonded} points of $\G\cdot x$ tend to have distance 1 and the \emph{bond angles} tend to form $2\pi/3$ angles.
By Lemma~\ref{Lemma:FrechetE} for all $(a,\alpha,x)\in W$ we have $E(\chi_\G x)=V(y_0)$, where $E=E_{a,\alpha}$ is the configurational energy and $y_0=y_{0,a,\alpha,x}=(g\cdot x-x)_{g\in\G_{a,\alpha}}$.

First we consider the $(5,1)$ nanotube.
We define
\begin{align*}
&\alpha_0:=11\pi/31\approx 1.115
\shortintertext{and}
&x_a:=a(r\cos(\beta), r\sin(\beta), 7/3)\in\R^3&&\text{for all }a>0,
\end{align*}
where $r=31/(\pi\sqrt3)$ and $\beta=5\pi/31$.
In the strict sense of \cite{Dresselhaus1995}, for all $(a, \alpha, x)\in W$ the set $\G\cdot x$ is a so-called $(5,1)$ nanotube if and only if $\alpha=\alpha_0$ and $x=x_a$.
The bond length of the unrolled $(5,1)$ nanotube $\G_{a,\alpha_0}\cdot x_a$, \ie the distance of two neighboring points of $\G_{a,\alpha_0}\cdot x_a$ with respect to the induced metric of the manifold $C_x$, is equal to 1 if and only if $a=a_0$, where
\[a_0:=3/(2\sqrt{31})\approx 0.269.\]
Now we investigate numerically with Algorithm~\ref{Algorithm:algorithm} the stability of the $(5,1)$ nanotube, more precisely of $(\G_{a,\alpha_0},x_a,V_{a,\alpha_0})$.
\begin{enumerate}
\item[\ref{item:A1}] For all $a>0$ we have $\eea\neq0$, see Figure~\ref{fig:Nanotube_1}, and thus $\chi_{\G_{a, \alpha_0}}x_a$ is not a critical point of $E_{a,\alpha_0}$.
Thus we can proceed with \ref{item:A6}.
\begin{figure}
    \centering
        \includegraphics[width=0.7\textwidth]{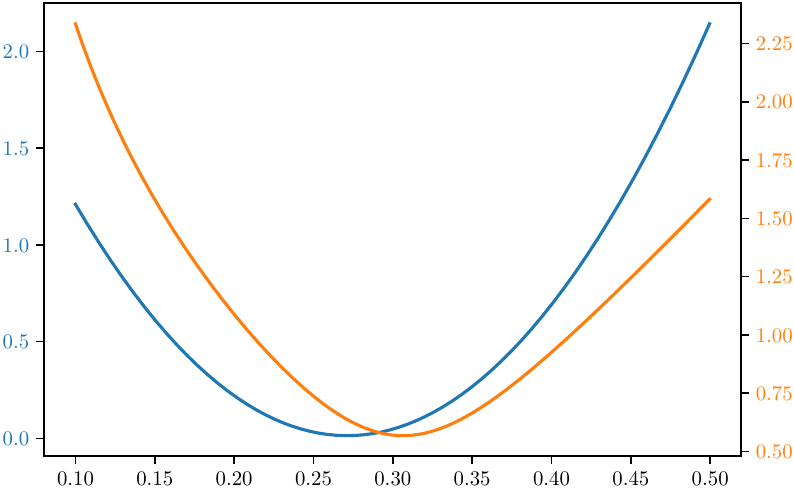}
    \caption{For the $(5,1)$ nanotube as described in Example~\ref{Example:Nanotube}, the graphs of the energy $E(\chi_{\G_{a,\alpha_0}}x_a)$ and the norm of $e_{V_{a,\alpha_0}}$ dependent on the scale factor $a$ are plotted in blue and orange, respectively. For all $a$, we have $e_{V_{a,\alpha_0}}\neq0$ and thus the $(5,1)$ nanotube is not stable and in fact not even a critical point.}\label{fig:Nanotube_1}
\end{figure}
\item[\ref{item:A6}] By \ref{item:A1} for all $a>0$ the triple $(\G_{a,\alpha_0}, x_a, V_{a,\alpha_0})$ is not stable with respect to both $\norm\fdot_\RR$ and $\newnorm\fdot\RR$.
\end{enumerate}
This failure suggests that we should relax our model to allow for nanotubes that correspond to critical points of the energy and still have the same neighborhood structure. 
We define
\begin{align*}
&(a^*,\alpha^*,x^*):=\argmin_{(a,\alpha,x)\in W}E_{a,\alpha}(\chi_{\G_{a,\alpha}} x)\approx(0.263, 1.117, (1.388, 0.776, 0.626))
\shortintertext{and}
&x_a^*:=\argmin_{x\in U_{a,\alpha^*}}E(\chi_\G x)\qquad\text{for all }a\approx a^*.
\end{align*}
In particular we have $x^*=x_{a^*}^*$.
We have $(a^*,\alpha^*,x^*)\approx(a_0,\alpha_0,x_{a_0})$ and thus the nanotube $\G_{a^*,\alpha^*}\cdot x^*$ is approximately equal to the $(5,1)$ nanotube $\G_{a_0,\alpha_0}\cdot x_{a_0}$.
Now for all $a\approx a^*$ we check the stability of $(\G_{a,\alpha^*},x_a^*,V_{a,\alpha^*})$ numerically with Algorithm~\ref{Algorithm:algorithm}.
\begin{enumerate}
\item[\ref{item:A1}] For all $a\approx a^*$ the function $\chi_\G x_a^*$ is a critical point of $E$ since Lemma~\ref{Lemma:energyderivativeY} implies $\ee=\parens[\big]{E'(\chi_\G x_a^*)(\chi_\G e_i))}_{i\in\{1,\dots,d\}}=0$ and hence that $\chi_\G x_a^*$ is critical. 
\item[\ref{item:A2}] We have
\begin{align*}
&\supp V =\{tp, t^6p, t^7p\}
\shortintertext{and}
&\supp\ff=\{t^{-6}, t^{-5}, t^{-1}, \id, t, t^5, t^6, tp, t^6p, t^7p\}
\end{align*}
by Remark~\ref{Remark:newee}\ref{item:newee} and the relations $(tp)^{-1}=pt^{-1}=tp$.
The first and second derivative of $V$ can be computed, \eg, with the Python library SymPy and $\ff$ can be computed numerically by Definition~\ref{Definition:eeff}.
\item[\ref{item:A3}] Since $\aff(\{t^{-1}, \id, t, p\}\cdot x_0)=\R^3=\aff(\G\cdot x_0)$ and $\{t, p\}$ generates $\G$, by Definition~\ref{Definition:Property} the set
\[\RR=\RR_a:=\{t^{-1},\id, t, t^2, t^{-1}p, p, tp\}\]
is an admissible $\id$-neighborhood. 
We define the bijection $\phi$ between $\RR$ and $\{0,\dots,6\}$ by $\phi(t^m)=m+1$ for all $m\in\{-1,0,1,2\}$ and $\phi(t^mp)=m+5$ for all $m\in\{-1,0,1\}$.
For all $a\approx a^*$ we define the six functions 
\begin{align*}
b_i=b_{i,a}\colon\RR\to\R^3,\qquad g\mapsto
\begin{cases}
\rot(g)^{\mathsf T}e_i&\text{for }i\in\{1, 2, 3\},\\
\rot(g)^{\mathsf T}A_i(g\cdot x_a^*-x_a^*)&\text{for }i\in\{4, 5, 6\},
\end{cases}
\end{align*}
where
\[A_4=\parens[\bigg]{\begin{smallmatrix}0&-1&0\\1&0&0\\0&0&0\end{smallmatrix}}, A_5=\parens[\bigg]{\begin{smallmatrix}0&0&-1\\0&0&0\\1&0&0\end{smallmatrix}}\text{ and } A_6=\parens[\bigg]{\begin{smallmatrix}0&0&0\\0&0&-1\\0&1&0\end{smallmatrix}}.\]
By Proposition~\ref{Proposition:UtransUrotbig} the sets $\{b_1,\dots,b_6\}$ and $\{b_1,\dots,b_4\}$ are bases of $\Uiso\RR$ and $\Unewiso\RR$, respectively.
With, \eg, the Gram-Schmidt process we can determine functions $b_1',\dots,b_6'\colon\RR\to\R^3$ such that $\{b_1',\dots,b_6'\}$ and $\{b_1',\dots,b_4'\}$ are orthonormal bases of $\Uiso\RR$ and $\Unewiso\RR$, respectively.
A bijection between $\{u\colon\RR\to\C^3\}$ and $\C^{21}$ is given by $u\mapsto(u(\phi^{-1}(0)),\dots,u(\phi^{-1}(6)))$.
Let $B=(b_1',\dots,b_6')\in\R^{21\times6}$ and $B_0=(b_1',\dots,b_4')\in\R^{21\times4}$.
The matrices $P=I_{21}-BB^{\mathsf T}$ and $P_0=I_{21}-B_0B_0^{\mathsf T}$ are orthogonal projection matrices with kernels $\Uiso\RR$ and $\Unewiso\RR$, respectively.
Let $p_0,\dots,p_6,p_{0,0},\dots,p_{0,6}\in\R^{21\times3}$ such that $P=(p_0,\dots,p_6)$ and $P_0=(p_{0,0},\dots,p_{0,6})$.
For the functions $\gggg$ and $\newgggg$ of Definition~\ref{Definition:gggg} we have
\begin{align*}
&\supp\gggg=\supp\newgggg=\RR\text{ and}\\
&\gggg(g)=p_{\phi(g)},~
\newgggg(g)=p_{0,\phi(g)}\quad\text{for all }g\in\RR.
\end{align*}
\item[\ref{item:A4}] We have $\T\F=\T=\angles t$, $\MM=\N$ and $\{\id\}$ is a representation set of $\dual{\T\F}/{\simrg}$ by Lemma~\ref{Lemma:RepSys}\ref{item:aaa}.
We have $\LL=\angles a$ and $\dualL=\angles{a^{-1}}$, see Definition~\ref{Definition:ChiK-L}\ref{Definition:LLst}.
By Proposition~\ref{Proposition:SpaceGroupLattice} we have $\set{k\in\R}{\iso{I_1}k\in\G_\id}=\angles{a^{-1}}$ and thus $\G_\id=\set{\iso{(-I_1)^q}{ma^{-1}}}{m\in\Z,q\in\{0,1\}}$.
The interval $K_\id=[0,1/(2a))$ is a representation set of $\R/\G_\id$.
\item[\ref{item:A5}] The set $\{\id,p\}$ is a complete set of representatives of the cosets of $\T\F$ in $\G$.
For all $k\in K_\id$ and $g\in\G$ we have
\[\Ind_{\T\F}^\G \chi_k(g)=\begin{cases}
\parens[\bigg]{\begin{matrix}\chi_k(g)&0\\0&\chi_k(p^{-1}gp)\end{matrix}} & \text{if }g\in\T\F\\
\parens[\bigg]{\begin{matrix}0&\chi_k(gp)\\ \chi_k(p^{-1}g)&0\end{matrix}}& \text{else.}
\end{cases}\]
Now for all $k\in K_\id$, it is easy to compute the complex $6\times 6$ matrices $\fourier\ff(\Ind\chi_k)$, $\fourier\gggg(\Ind\chi_k)$ and $\fourier\newgggg(\Ind\chi_k)$.
For both $\gggg$ and $\newgggg$ we have
\begin{align*}
\set{k\in K_\id}{\fourier{g_{\RR(,0,0)}}(\Ind\chi_k)\text{ has full rank}}=K_\id\setminus\{0,\alpha^*/(2\pi a)\}.
\end{align*}
For all $k\in K_\id\setminus\{0,\alpha^*/(2\pi a)\}$ we can compute $\lambdamin\parens{\fourier\ff(\Ind\chi_k),\fourier\gggg(\Ind\chi_k)}$ and $\lambdamin\parens{\fourier\ff(\Ind\chi_k),\allowbreak\fourier\newgggg(\Ind\chi_k)}$.
In particular we can compute both $\lambdaa(a,\alpha^*)$ and $\lambdanewa(a,\alpha^*)$ numerically, see Figure~\ref{fig:Nanotube_2}.
\item[\ref{item:A6}] In the stretched case $a>a^*$, we have $\lambdaa(a,\alpha^*)>0$ and $\lambdanewa(a,\alpha^*)>0$ and thus $(\G_{a,\alpha^*},x_{a,\alpha^*},V_{a,\alpha^*})$ is stable with respect to both $\norm\fdot_\RR$ and $\newnorm\fdot\RR$.
In the compressed case $a\in(0,a^*)$ we have $\lambdaa(a,\alpha^*)=-\infty$ and $\lambdanewa(a,\alpha^*)<0$ and thus $(\G_{a,\alpha^*},x_{a,\alpha^*},V_{a,\alpha^*})$ is not stable with respect to both $\norm\fdot_\RR$ and $\newnorm\fdot\RR$.
\begin{figure}
    \centering
    \begin{subfigure}{0.48\textwidth}
        \includegraphics[width=\textwidth]{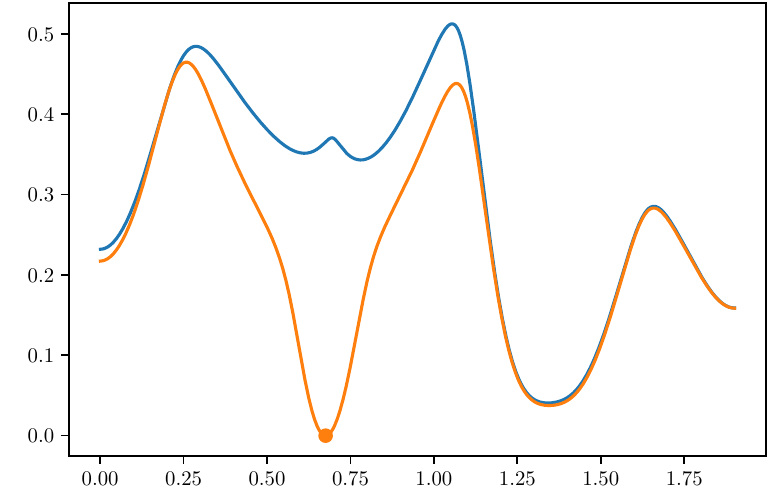}
    \end{subfigure}
    ~ 
    \begin{subfigure}{0.48\textwidth}
        \includegraphics[width=\textwidth]{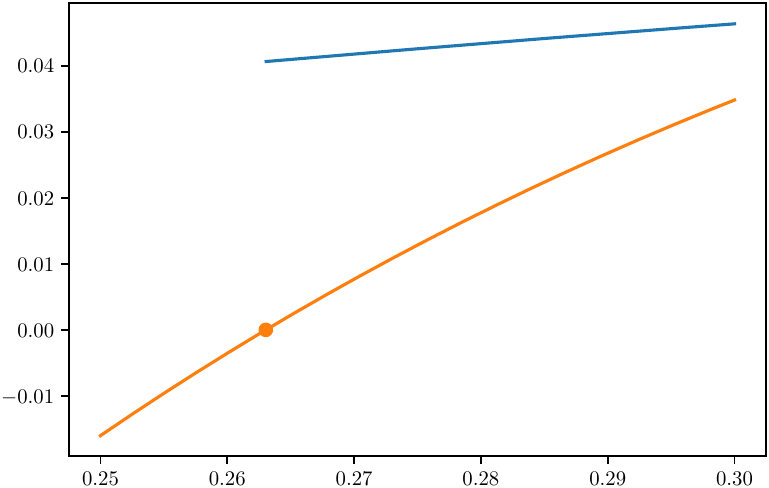}
    \end{subfigure}
    \caption{For the nanotube as described in Example~\ref{Example:Nanotube}, the point $(\alpha^*/(2\pi a^*),0)$ and the graphs of $\lambdamin\parens{\widehat{\ff}(\chi_k),\widehat{\gggg}(\chi_k)}$ (blue) and $\lambdamin\parens{\widehat{\ff}(\chi_k),\widehat{\newgggg}(\chi_k)}$ (orange) dependent on $k\in K_\id\setminus\{0,\alpha^*/(2\pi a^*)\}$ are plotted on the left for the choice $a=a^*$. The point $(a^*,0)$ and and the graphs of $\lambdaa$ (blue) and $\lambdanewa$ (orange) dependent on the scale factor are plotted on the right.}\label{fig:Nanotube_2}
\end{figure}
\end{enumerate}
Notice that in the stretched case $a>a^*$, the appropriate seminorm for the stability is $\newnorm\fdot\RR$.
For the equilibrium case $a\approx a^*$, the weaker seminorm $\norm\fdot_\RR$ is appropriate since $\lim_{a\searrow a^*}\lambdanewa(a,\alpha^*)=0$ and $\lim_{a\searrow a^*}\lambdaa(a,\alpha^*)>0$.

For all $a\approx a^*$ and $\alpha\approx\alpha^*$ we can compute $\lambdaa(a,\alpha)$ and $\lambdanewa(a,\alpha)$ analogously.
For $\alpha\approx\alpha^*$ the graphs of $\lambdaa(\fdot,\alpha)$ and $\lambdanewa(\fdot, \alpha)$ are similar to the graphs of $\lambdaa(\fdot, \alpha^*)$ and $\lambdanewa(\fdot,\alpha^*)$.
As an example, we consider 
\[\alpha_a:=\argmin_{\alpha\in(0,\pi)}E(\chi_\G x_{a,\alpha})\qquad\text{for all }a\approx a^*,\]
see Figure~\ref{fig:Nanotube_3}.
In that figure, the graphs of 
\begin{align*}
&a\mapsto\reldiff\parens[\big]{\lambdaa(a,\alpha^*),\lambdaa(a,\alpha_a)}
\shortintertext{and}
&a\mapsto\reldiff\parens[\big]{\lambdanewa(a,\alpha^*),\lambdanewa(a,\alpha_a)}
\end{align*}
are plotted, where $\reldiff(x,y):=\abs{x-y}/\max\{\abs x,\abs y\}$ for all $x,y\in\R.$
\begin{figure}
    \centering
    \begin{subfigure}{0.48\textwidth}
        \includegraphics[width=\textwidth]{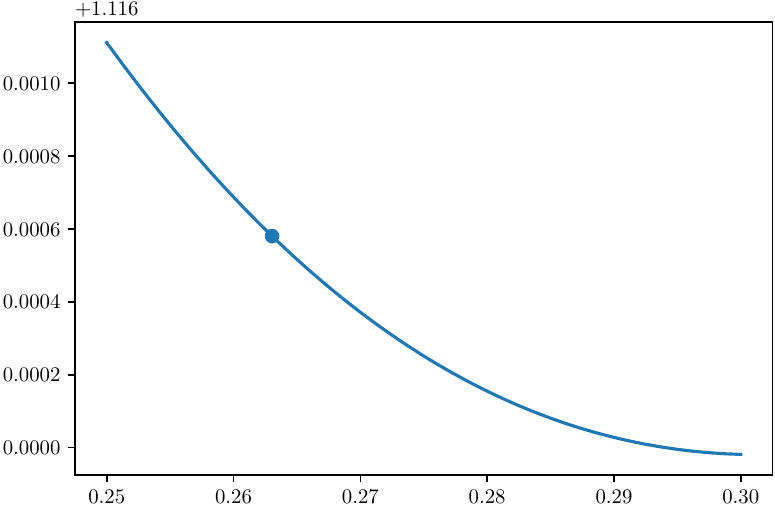}
    \end{subfigure}
    ~ 
    \begin{subfigure}{0.48\textwidth}
        \includegraphics[width=\textwidth]{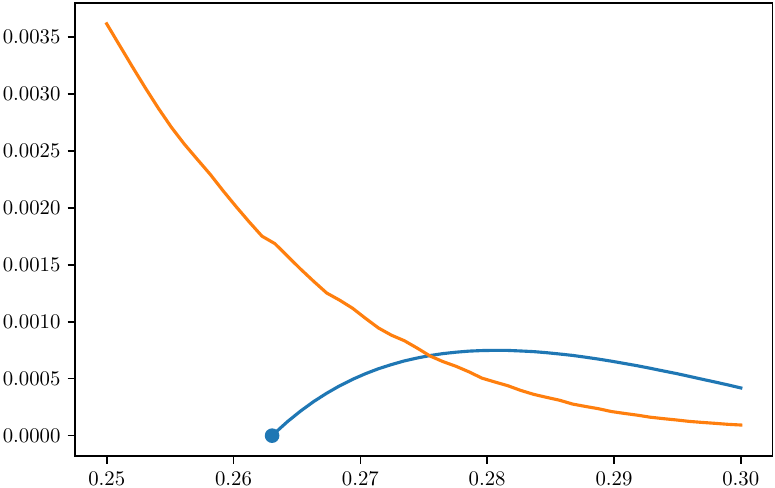}
    \end{subfigure}
    \caption{For the nanotube as described in Example~\ref{Example:Nanotube}, the point $(a^*,\alpha(a^*))$ and the graph of the angle $\alpha(a)$ dependent on the scale factor $a$ are plotted on the left. The point $(a^*,0)$ and the graphs of $\reldiff\parens[\big]{\lambdaa(a,\alpha^*),\allowbreak\lambdaa(a,\alpha_a)}$ (blue) and $\reldiff\parens[\big]{\lambdanewa(a,\alpha^*),\allowbreak\lambdanewa(a,\alpha_a)}$ (orange) dependent on the scale factor $a$ are plotted on the right.}\label{fig:Nanotube_3}
\end{figure}
\end{Example}
%
%%%%%%%%%%%%%%%%%%%%%%%%
%
\section{Proofs}\label{section:proofs}
%
%%%%%%%%%%%%%%%%%%%%%%%%
%
\subsection{Derivatives of the energy}\label{subsec:prelim}
We prove Lemma~\ref{Lemma:FrechetE}, Proposition~\ref{Proposition:LambdaDual} and Lemma~\ref{Lemma:energyderivativeY}.  
\begin{proof}[Proof of Lemma~\ref{Lemma:FrechetE}]
By \ref{item:Rotation} we have
\begin{equation}\label{eq:beach}
E(u)=\frac1{\abs{\CC_N}}\sum_{g\in\CC_N}V\parens[\Big]{\parens[\Big]{\frac1{\abs{\G_{x_0}}}\sum_{h'\in h}h'\gdot u(gh')-\frac1{\abs{\G_{x_0}}}\sum_{h'\in\G_{x_0}}h'\gdot u(gh')}_{h\in\Gstar}}
\end{equation}
for all $u\in\UPer$ and $N\in \MM$ such that $u$ is $\T^N$-periodic.
By \ref{item:Vfrechet} the function $V$ is two times Fr\'echet differentiable.
We define the vector space
\[W=\set[\Big]{w\colon\G\to L^\infty(\Gstar,\R^d)}{w\text{ is periodic}}\]
and equip $\UPer$ and $W$ each with the uniform norm $\norm\fdot_\infty$.
The linear map
\begin{align*}
\phi_1\colon\UPer\to W,\quad u\mapsto \parens[\Big]{\G\to L^\infty(\Gstar,\R^d),g\mapsto
\parens[\big]{\rot(h)\proj{u}(gh)-\proj{u}(g)}_{h\in\Gstar}}
\end{align*}
is bounded and thus two times continuously Fr\'echet differentiable.
Let us denote $\tau_{\G_{x_0}}:=\frac1{\abs{\G_{x_0}}}\sum_{h\in \G_{x_0}}\trans(h)$.
The first and second derivative of the function 
\begin{align*}
\phi_2\colon&W\to\R,\quad 
w\mapsto\frac1{\abs{\CC_N}}\sum_{g\in\CC_N}V\parens[\big]{\parens{h\cdot\tau_{\G_{x_0}}-\tau_{\G_{x_0}}}_{h\in\Gstar}+w(g)}
\end{align*}
if $w$ is $\T^N$-periodic, is given by
\begin{align*}
&\phi_2'(w)w_1 = \frac1{\abs{\CC_N}}\sum_{g\in\CC_N}V'\parens[\big]{\parens{h\cdot\tau_{\G_{x_0}}-\tau_{\G_{x_0}}}_{h\in\Gstar}+w(g)}w_1(g)
\shortintertext{and}
&\phi_2''(w)(w_1,w_2)
= \frac1{\abs{\CC_N}}\sum_{g\in\CC_N}V''\parens[\big]{\parens{h\cdot\tau_{\G_{x_0}}-\tau_{\G_{x_0}}}_{h\in\Gstar}+w(g)}(w_1(g),w_2(g))
\end{align*}
for all $w,w_1,w_2\in W$ and $N\in\MM$ such that $w$, $w_1$ and $w_2$ are $\T^N$-periodic.
Thus $\phi_2$ is two times continuously Fr\'echet differentiable.
Since $E=\phi_2\circ\phi_1$, also the function $E$ is two times continuously Fr\'echet differentiable.

Equation~\eqref{eq:beach} also implies the representations of $E(\xx)$, $E'(\xx)$ and $E''(\xx)$.
\end{proof}

\begin{proof}[Proof of Proposition~\ref{Proposition:LambdaDual}.]
We only consider $\lambdaa$. 
The characterization of $\lambdanewa$ is analogous. 
Let us denote $\text{RHS}=\inf\set{E''(\xx)(u,u)}{u\in\UPer,\norm u_\RR=1}$.
It is clear that $\lambdaa\le\text{RHS}$.
Let $c\in\R$ be such that $c>\lambdaa$.
There exists some $u\in\UPer$ such that $c\norm u_\RR^2> E''(\xx)(u,u)$.
By Theorem~\ref{Theorem:StrongerEquivalence}\ref{Theorem:EquivalenceAll}, Proposition~\ref{Proposition:UtransUrotbig} and since $\G\cdot x_0$ is not equal to $\{x_0\}$, we have $\ker(\norm\fdot_\RR)\neq\UPer$.
Thus and since $\norm\fdot_\RR\le\sqrt{\abs\RR}\norm\fdot_\infty$, we may assume that $\norm u_\RR=1$.
So we have $\text{RHS}\le c$.
Since $c$ was arbitrary, we have $\lambdaa\ge\text{RHS}$.
\end{proof}

\begin{proof}[Proof of Lemma~\ref{Lemma:energyderivativeY}.]
Let $N\in\MM$ and $g_1,g_2\in\G$. Since $\T^N$ is a normal subgroup of $\G$, we have
\begin{equation}\label{eq:rxot}
\sum_{g\in\CC_N}\indi_{g_1\T^N}(gh)=\sum_{g\in\CC_N}\sum_{t\in\T^N}\delta_{g_1h^{-1},gt}=1\qquad\text{for all }h\in\G.
\end{equation}
Using Lemma~\ref{Lemma:FrechetE}, Remark~\ref{Remark:VFrechet}\ref{item:RemarkVFrechet} and \eqref{eq:rxot}, we have
\begin{align*}
&\partial_{g_1\T^N}E(\xx)=\parens[\big]{E'(\xx)(\indi_{g_1\T^N}e_i)}_{i\in\{1,\dots,d\}}\nonumber\\
&\quad=\frac1{\abs{\CC_N}\abs{\G_{x_0}}}\sum_{g\in\CC_N}\sum_{h\in \Gstar}\partial_hV(y_0)\parens[\Big]{\sum_{h'\in h}\indi_{g_1\T^N}(gh')\rot(h')-\sum_{h'\in\G_{x_0}}\indi_{g_1\T^N}(gh')\rot(h')}\nonumber\\
&\quad=\frac1{\abs{\CC_N}}\sum_{h\in\Gstar}\partial_hV(y_0)\parens{L_h-L_{\G_{x_0}}}
=\frac1{\abs{\CC_N}}\ee.
\end{align*}
Since $\T^N$ is a normal subgroup of $\G$, for all $h_1,h_2\in\G$ we have
\begin{align}
\sum_{g\in\CC_N}\indi_{g_2\T^N}(gh_2)\indi_{g_1\T^N}(gh_1)&=\sum_{g\in\CC_N}\sum_{t,s\in\T^N}\delta_{g_2h_2^{-1},gs}\delta_{g_1h_1^{-1}t,gs}\nonumber\\
&=\sum_{t\in\T^N}\delta_{g_2h_2^{-1},g_1h_1^{-1}t}
=\sum_{t\in\T^N}\delta_{h_2^{-1}h_1,g_2^{-1}g_1t}.\label{eq:starstarstar}
\end{align}
Using Lemma~\ref{Lemma:FrechetE}, Remark~\ref{Remark:VFrechet}\ref{item:RemarkVFrechet} and \eqref{eq:starstarstar}, we have
\begin{align}
&\partial_{g_2\T^N}\partial_{g_1\T^N}E(\xx)=\parens[\big]{E''(\xx)(\indi_{g_2\T^N}e_i,\indi_{g_1\T^N}e_j)}_{i,j\in\{1,\dots,d\}}\nonumber\\
\begin{split}\nonumber
&\quad=\frac1{\abs{\CC_N}\abs{\G_{x_0}}^2}\sum_{g\in\CC_N}\sum_{h_1,h_2\in\Gstar}\sum_{h_1',h_2'\in\G}\parens{\chi_{h_2}(h_2')-\chi_{\G_{x_0}}(h_2')}\indi_{g_2\T^N}(gh_2')\rot(h_2')^{\mathsf T}\\
&\quad\quad\quad\quad\quad\quad\quad\quad\quad\quad\quad\quad\quad\quad\partial_{h_2}\partial_{h_1}V(y_0)\parens{\chi_{h_1}(h_1')-\chi_{\G_{x_0}}(h_1')}\indi_{g_1\T^N}(gh_1')\rot(h_1')\end{split}\\
\begin{split}\nonumber
&\quad=\frac1{\abs{\CC_N}\abs{\G_{x_0}}^2}\sum_{t\in\T^N}\sum_{h_1,h_2\in\Gstar}\sum_{h_1',h_2'\in\G}\delta_{h_2'^{-1}h_1',g_2^{-1}g_1t}\rot(h_2')^{\mathsf T}\partial_{h_2}\partial_{h_1}V(y_0)\rot(h_1')\\
&\quad\quad\quad\quad\quad\quad\quad\quad\quad\quad\quad\quad\quad\quad\parens{\chi_{h_2}(h_2')-\chi_{\G_{x_0}}(h_2')}\indi_{g_2\T^N}(gh_2')\rot(h_2')^{\mathsf T}\end{split}\\
\nonumber
&\quad=\frac1{\abs{\CC_N}}\sum_{t\in\T^N}\ff(g_2^{-1}g_1t).\qedhere
\end{align}
\end{proof}

\subsection{A sufficient condition for a local minimum}\label{Section:Minimum}
We prove Theorem~\ref{Theorem:local-min}. 
In the following $\RR\subset\G$ is an admissible neighborhood range of $\id$.  
We first notice that the space $\Uiso\RR$ of infinitesimally rigid displacements of $\RR$ is indeed the tangent space at the identity mapping to the space of nonlinearly rigid deformations.
\begin{Proposition}\label{Proposition:Manifold}
There exists an open neighborhood $U\subset\E(d)$ of $\id$ such that the set
\[\set[\Big]{u\colon\RR\to\R^d}{\exists\+a\in U\+\forall g\in\RR: g\cdot(x_0+\proj{u}(g))=a\cdot(g\cdot x_0)}\]
is a manifold and $\Uiso\RR$ is its tangent space at the point $0$.
\end{Proposition}
\begin{proof}
For a representation set $\mathcal D$ of $\Rcoset$ we define the embedding $e\colon(\R^d)^{\RR\setminus\mathcal D}\to(\R^d)^\RR$ by
\begin{align*}
(e(x))_g=
\begin{cases}
-x_g&\text{if }g\in\RR\setminus\mathcal D,\\
\rot(g)^{\mathsf T}\sum_{g'\in g(\G_{x_0}\setminus\{\id\})}\rot(g')x_{g'}&\text{if }g\in\mathcal D.
\end{cases}
\end{align*}
Let $B=\set{S\in\Skew(d)}{\norm S<c}$ with $c>0$ so small that the matrix exponential $\exp\colon B\to\exp(B)$ is a diffeomorphism onto a neighborhood $\exp(B)$ of $I_d$ in $\SO(d)$.
Let $\log$ be its inverse map.
Let $U\subset\Skew((d-\daff)+\daff)$ be a neighborhood of $0$ such that the map
\begin{align*}
f\colon&U\to\Skew(d),\quad 
\parens[\Big]{\begin{smallmatrix}S_1&A\\-A^{\mathsf T}&S_2\end{smallmatrix}}\mapsto\log\parens[\Big]{\exp\parens[\Big]{\begin{smallmatrix}0&A\\-A^{\mathsf T}&S_2\end{smallmatrix}}\exp\parens[\Big]{\begin{smallmatrix}S_1&0\\0&0\end{smallmatrix}}}
\end{align*}
is well-defined.
By the inverse function theorem there exists an open neighborhood $V\subset U$ of 0 such that $W:=f(V)$ is an open neighborhood of $0$ and the map $f|_V\colon V\to W$ is a diffeomorphism.
Without loss of generality we may assume that
\[V=\set[\Big]{\parens[\Big]{\begin{smallmatrix}S_1&A\\-A^{\mathsf T}&S_2\end{smallmatrix}}}{S_1\in V_1,(A,S_2)\in V_2},\]
where $V_1\subset\Skew(d-\daff)$ is an open neighborhood of $0$ and $V_2\subset\R^{(d-\daff)\times\daff}\times\Skew(\daff)$ is an open neighborhood of $0$.
The set $X:=\set{\iso{\exp(T)}b}{T\in W,b\in\R^d}\subset\E(d)$ is an open neighborhood of $\id$.
Setting $Y=\R^d\times V_2\times(\R^d)^{\RR\setminus\mathcal D}$ we have
\begin{align*}
M:=&\set[\Big]{u\colon\RR\to\R^d}{\exists\+a\in X\+\forall g\in\RR : g\cdot x_0+\rot(g)\proj{u}(g)=a\cdot(g\cdot x_0)}\\
=&\set[\big]{(\rot(g)^{\mathsf T}(b+(\exp(T)-I_d)(g\cdot x_0)))_{g\in\RR}+e(x)}{b\in\R^d,T\in W,x\in(\R^d)^{\RR\setminus\mathcal D}}\\
=&\set[\Big]{\parens[\Big]{\rot(g)^{\mathsf T}\parens[\Big]{b+\parens[\Big]{\exp\parens[\Big]{\begin{smallmatrix}0&A\\-A^{\mathsf T}&S\end{smallmatrix}}-I_d}(g\cdot x_0-x_0)}}_{g\in\RR}+e(x)}{(b,(A,S),x)\in Y}
\end{align*}
since $g\cdot x_0-x_0\in\{0_{d-\daff}\}\times\R^\daff$ for all $g\in\RR$.
Thus the map $h\colon Y\to M$ given by 
\begin{align*}
h(b,A,S,x)=\parens[\Big]{\rot(g)^{\mathsf T}\parens[\Big]{b+\parens[\Big]{\exp\parens[\Big]{\begin{smallmatrix}0&A\\-A^{\mathsf T}&S\end{smallmatrix}}-I_d}(g\cdot x_0-x_0)}}_{g\in\RR}+e(x)
\end{align*}
is surjective.
Since $\RR$ is admissible and thus $\aff(\RR\cdot x_0)=\aff(\G\cdot x_0)$, there exists some $C=(c_g)_{g\in\RR}\in\R^{\daff\times\abs\RR}$ of rank $\daff$ such that $(g\cdot x_0-x_0)_{g\in\RR}=(\begin{smallmatrix}0\\C\end{smallmatrix})$.
We have
\begin{align*}
h'(0)\colon Y\to(\R^d)^\RR,\qquad 
(b,A,S,x)\mapsto\parens[\Big]{\rot(g)^{\mathsf T}\parens[\Big]{b+\parens[\Big]{\begin{smallmatrix}Ac_g\\Sc_g\end{smallmatrix}}}}_{g\in\RR}+e(x).
\end{align*}
Since $\id\in\RR$ and the rank of $C$ is equal to the number of its rows, the map $h'(0)$ is injective.
Thus there exist an open neighborhood $Y_0\subset Y$ of $0$ and an open neighborhood $M_0\subset M$ of $0$ such that $h|_{Y_0}\colon Y_0\to M_0$ is a diffeomorphism.
In particular $M$ is a manifold and $\Uiso\RR$ is its tangent space at $0$.
\end{proof}
\begin{Remark}
A chart of the manifold of the above theorem is given in the proof.
\end{Remark}

We can now prove Theorem~\ref{Theorem:local-min}. 
\begin{proof}[Proof of Theorem~\ref{Theorem:local-min}]
First we assume that $d_1\in\{0,1\}$.
Let $\RRVset/\G_{x_0}\subset\Gstar$ be a finite interaction range of $V$.
Since $\ee=0$, by Lemma~\ref{Lemma:energyderivativeY} we have $E'(\xx)=0$.
By Theorem~\ref{Theorem:StrongerEquivalence}\ref{Theorem:NewEquivalenceAll} there exists a constant $c_1$ such that $\newnorm\fdot\RR\ge c_1\newnablanorm\fdot{\RR\cup\RRVset}$.
Let $c_2=c_1^2\lambdanewa/2>0$.
We have
\begin{align}
E''(\xx)(u,u)&\ge\lambdanewa\newnorm u\RR^2
\ge\tfrac\lambdanewa2\newnorm u\RR^2+ c_2\newnablanorm u{\RR\cup\RRVset}^2\nonumber\\
&\ge\tfrac\lambdanewa2\newnorm u\RR^2+c_2\newnablanorm u{\RRVset}^2
=\tfrac\lambdanewa2\newnorm u\RR^2+c_2\norm{\nabla_{\RRVset}u}_2^2\label{eq:econe}
\end{align}
for all $u\in\UPer$.
In the last step we used that $\newnablanorm\fdot{\RRVset}=\norm{\nabla_{\RRVset}\fdot}_2$ since $d_1\in\{0,1\}$ implies $\Unewrot{\RRVset}=\set{u\colon\RRVset\to\R^d}{\forall g\in\RRVset\colon\rot(g)\proj{u}(g)=0}$.
Since $\RRVset/\G_{x_0}$ is a finite interaction range of $V$, by Taylor's theorem there exists some $\epsilon>0$ such that for all $u\colon\Gstar\to\R^d$ with $\norm u_\infty<\epsilon$ we have
\begin{equation}\label{eq:ectwo}
V(y_0+u)\ge V(y_0)+V'(y_0)u+\tfrac12V''(y_0)(u,u)-c_2\norm{u|_{\RRVset}}^2.
\end{equation}
For all $u\in\UPer$ with $\norm u_\infty<\epsilon/2$ we have (with $w=\xx+u$)
\begin{align*}
&E(\xx+u)=\frac1{\abs{\CC_N}}\sum_{g\in\CC_N}V\parens[\Big]{\parens[\Big]{\frac1{\abs{\G_{x_0}}}\sum_{h'\in h}h'\cdot w(gh')-\frac1{\abs{\G_{x_0}}}\sum_{h'\in\G_{x_0}}h'\cdot w(gh')}_{h\in\Gstar}}\\
\begin{split}
&\quad\ge\frac1{\abs{\CC_N}}\sum_{g\in\CC_N}\parens[\Big]{V(y_0)+V'(y_0)\parens[\big]{\rot(h)\proj{u}(gh)-\proj{u}(g)}_{h\in\Gstar}\\
&\quad\quad+\frac12 V''(y_0)\parens[\Big]{\parens[\big]{\rot(h)\proj{u}(gh)-\proj{u}(g)}_{h\in\Gstar}^2}-c_2\norm{\nabla_{\RRVset} u(g)}^2}
\end{split}\\
&\quad=E(\xx)+E'(\xx)u+\frac12E''(\xx)(u,u)-c_2\norm{\nabla_{\RRVset} u}_2^2
\ge E(\xx)+\tfrac\lambdanewa{4}\newnorm u\RR^2,
\end{align*}
where $N\in\MM$ such that $u$ is $\T^N$-periodic and we used \ref{item:Rotation} in the first, \eqref{eq:ectwo} in the second, Lemma~\ref{Lemma:FrechetE} in the third and \eqref{eq:econe} in the last step.

Now we assume that $d_1=d$, \ie $\G$ is finite.
Thus we have $\Uiso\RR=\Unewiso\RR$.
By Proposition~\ref{Proposition:Manifold} there exists a neighborhood $U\subset\E(d)$ of $\id$ such that the set
\[M:=\set[\Big]{u\in\UPer}{\exists\+ a\in U\+\forall g\in\G : g\cdot x_0+\rot(g)\proj{u}(g)=a\cdot (g\cdot x_0)}\]
is a manifold and $\UIso$ is its tangent space at $0$.
For all $u\in M$ and $v\in\UPer$ we have
\begin{align}
\begin{split}
&E(\xx+u+v)
=\frac1{\abs\G}\sum_{g\in\G}V\parens[\Big]{\parens[\big]{(gh)\cdot x_0+\rot(gh)\proj{u}(gh)+\rot(gh)\proj{v}(gh)\\
&\quad\quad\quad\quad\quad\quad\quad\quad\quad\quad\quad\quad-g\cdot x_0-\rot(g)\proj{u}(g)-\rot(g)\proj{v}(g)}_{h\in\Gstar}}
\end{split}\nonumber\\
&\quad=\frac1{\abs\G}\sum_{g\in\G}V\parens[\Big]{\parens[\big]{(agh)\cdot x_0+\rot(gh)\proj{v}(gh)-(ag)\cdot x_0-\rot(g)\proj{v}(g)}_{h\in\Gstar}}\nonumber\\
&\quad=\frac1{\abs\G}\sum_{g\in\G}V\parens[\Big]{\parens[\big]{A\parens[\big]{(gh)\cdot x_0+\rot(gh)\proj{w}(gh)-g\cdot x_0-\rot(g)\proj{w}(g)}}_{h\in\Gstar}}\nonumber\\
&\quad=\frac1{\abs\G}\sum_{g\in\G}V\parens[\Big]{\parens[\big]{(gh)\cdot x_0+\rot(gh)\proj{w}(gh)-g\cdot x_0-\rot(g)\proj{w}(g)}_{h\in\Gstar}}\nonumber\\
&\quad=E\parens[\big]{\xx+w},
\label{eq:manifold}
\end{align}
where $a=\iso Ab\in U$ such that $g\cdot x_0+\rot(g)\proj{u}(g)=(ag)\cdot x_0$ for all $g\in\G$, the function $w\colon\G\to\R^d$ is defined by $g\mapsto \rot(g)^{\mathsf T}A^{\mathsf T}\rot(g)v(g)$, and we used \ref{item:Rotation} in the second to last step.
In particular we have
\begin{equation}\label{eq:manifoldE}
E(\xx+u)=E(\xx)\qquad\text{for all }u\in M.
\end{equation}
Since $\ee=0$, by \eqref{eq:manifold} and Lemma~\ref{Lemma:energyderivativeY} for all $u\in M$ and $v\in\UPer$ we have
\begin{align}
E'(\xx+u)v&=\lim_{t\to0}\frac{E(\xx+u+tv)-E(\xx+u)}t\nonumber\\
&=\lim_{t\to0}\frac{E\parens[\big]{\xx+tw}-E(\xx)}t
=E'(\xx)w
=0,\label{eq:manifoldDEhelp}
\end{align}
where $w$ is defined as above.
By \eqref{eq:manifoldDEhelp} we have
\begin{equation}\label{eq:manifoldDE}
E'(\xx+u)=0\qquad\text{for all }u\in M.
\end{equation}
In the following, $c>0$ denotes a sufficiently small constant, which may vary from line to line.
Since $\lambdanewa>0$, we have
\[E''(\xx)(u,u)\ge c\newnorm u\RR^2\qquad\text{for all }u\in\UPer.\]
Let $\UIso^\bot$ be the orthogonal complement of $\UIso$ with respect to $\norm\fdot_2$.
By Theorem~\ref{Theorem:StrongerEquivalence}\ref{Theorem:NewEquivalenceAll} the seminorm $\newnorm\fdot\RR|_{\UIso^\bot}$ is a norm and thus we have
\[E''(\xx)(u,u)\ge c\norm u_\infty^2\qquad\text{for all }u\in\UIso^\bot.\]
Since $E''$ is continuous in $(\UPer,\norm\fdot_\infty)$, without loss of generality we may assume that $M$ is such that
\begin{equation}\label{eq:manifoldTaylor}
E''(\xx+u)(v,v)\ge c\norm v_\infty^2\qquad\text{for all }u\in M\text{ and }v\in\UIso^\bot.
\end{equation}
Without loss of generality let $M$ be such that by \eqref{eq:manifoldDE}, \eqref{eq:manifoldTaylor}, Taylor's theorem and \eqref{eq:manifoldE} there exists a neighborhood $V\subset\UIso^\bot$ of $0$ such that
\[E(\xx+u+v)\ge E(\xx+u)=E(\xx)\qquad\text{for all }u\in M\text{ and }v\in V.\]
Since $M+V\subset\UPer$ is a neighborhood of $0$, the assertion is proven.
\end{proof}
%
%%%%%%%%%%%%%%%%
%
\subsection{Second order bounds on the energy}\label{Section:Boudedness}
In this section we prove Proposition~\ref{Proposition:BoundedOne},  Theorems~\ref{Theorem:BoundedThree}, \ref{Theorem:FiniteSpace} and \ref{Theorem:dtwodtwo}. We also discuss the two Examples~\ref{Example:DEUiso} and~\ref{Example:counterV} alluded to in Remarks~\ref{Remark:BilinearFormLambda}\ref{item:lambdanotfinite} and~\ref{Remark:Hessianbounds}. 
Throughout we assume that $V$ has finite interaction range. 

We begin with the general, yet weak estimate provided by Proposition~\ref{Proposition:BoundedOne}.
\begin{proof}[Proof of Proposition~\ref{Proposition:BoundedOne}]
Let $\RRVset/\G_{x_0}\subset\Gstar$ be a finite interaction range of $V$.
By Theorem~\ref{Theorem:StrongerEquivalence}\ref{Theorem:nablaequivalent} we may assume that $\RRVset\subset\RR$.
There exists a constant $C>0$ such that
\begin{equation}\label{eq:eob}
\abs[\big]{V''(y_0)(z,z)}\le C\norm{z|_{\RRVset}}^2\qquad\text{for all }z\in L^\infty(\Gstar,\R^d).
\end{equation}
Let $u\in\UPer$ and $N\in\MM$ such that $u$ is $\T^N$-periodic.
We have
\begin{align}
\abs[\big]{E''(\xx)(u,u)}
&=\abs[\Big]{\frac1{\abs{\CC_N}}\sum_{g\in\CC_N}V''(y_0)\parens[\big]{\parens[\big]{\rot(h)\proj{u}(gh)-\proj{u}(g)}_{h\in\Gstar}^2}} \nonumber\\
&\le\frac C{\abs{\CC_N}}\sum_{g\in\CC_N}\norm{\nabla_\RR u(g)}^2
=C\norm{\nabla_\RR u}_2^2,\nonumber%\label{eq:BoundedOne}
\end{align}
where we used Lemma~\ref{Lemma:FrechetE} in the first step and \eqref{eq:eob} in the second step.
Since $E''(\xx)$ is a symmetric bilinear form, the assertion follows.
\end{proof}
\subsubsection{Structures with equilibrized onsite potentials}
Note first that the property~\ref{item:Rotation} of $V$ implies the following lemma.
\begin{Lemma}\label{Lemma:PropertiesV}
For all $S\in\Skew(d)$ and $z\colon\Gstar\to\R^d$ we have
\[V''(y_0)(Sy_0,z)=-V'(y_0)(Sz).\]
\end{Lemma}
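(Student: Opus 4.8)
The plan is to differentiate the $\O(d)$-invariance \ref{item:Rotation} twice: once in a one-parameter family of rotations to produce a first-order identity valid at every base point, and then once more in the base point to land on the second-order identity claimed here. For $S\in\Skew(d)$ the curve $R(t)=\exp(tS)$ lies in $\SO(d)\subset\O(d)$, so by \ref{item:Rotation} the map $t\mapsto V(R(t)y)$ is constant for every $y\colon\Gstar\to\R^d$. The velocity of the curve $t\mapsto R(t)y$ at $t=0$ is the function $Sy$, so differentiating at $t=0$ via the chain rule yields the first-order identity $V'(y)(Sy)=0$ for all $y$ and all $S\in\Skew(d)$.

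Since this holds for \emph{every} base point $y$, the next step is to differentiate it in $y$. Substituting $y_0+sz$ and expanding by linearity gives $V'(y_0+sz)\bigl(S(y_0+sz)\bigr)=V'(y_0+sz)(Sy_0)+s\,V'(y_0+sz)(Sz)$; differentiating in $s$ at $s=0$ produces $V''(y_0)(z,Sy_0)+V'(y_0)(Sz)=0$. Invoking the symmetry of the second Fréchet derivative, $V''(y_0)(Sy_0,z)=V''(y_0)(z,Sy_0)$, rearranges this into $V''(y_0)(Sy_0,z)=-V'(y_0)(Sz)$, which is the assertion.

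The hard part will be the domains: $y_0$ and hence $Sy_0$ need not lie in $L^\infty(\Gstar,\R^d)$, so the Fréchet differentiability of \ref{item:Vfrechet} cannot be applied verbatim to the curves above. The remedy is the standing finite-interaction-range assumption of this section. Since $V(y)=V(\chi_{\RR_V}y)$, one passes to the finite-dimensional reduction $\tilde V$ on $(\R^d)^{\RR_V}$, on which \ref{item:Rotation} reads $\tilde V(R\hat y)=\tilde V(\hat y)$ (with $R$ acting diagonally) and on which the extended functionals $V'(y_0)$ and $V''(y_0)$ of Definition~\ref{Definition:potential} agree with the ordinary gradient and Hessian of $\tilde V$ at the finite vector $\hat y_0=(g\gdot x_0-x_0)_{g\in\RR_V}$. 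In that finite-dimensional setting all the curves stay bounded and the two differentiation steps are elementary; the resulting identity then lifts back unchanged because both sides depend on $z$ only through $\chi_{\RR_V}z$, which is bounded for any $z$.
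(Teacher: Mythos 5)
Your argument is correct, and it reaches the identity by a slightly different route than the paper. Both proofs differentiate the $\O(d)$-invariance \ref{item:Rotation} twice along the curve $e^{tS}$, but in opposite orders. You first differentiate in the group parameter to get the pointwise Noether identity $V'(y)(Sy)=0$ for every base point $y$, and then differentiate that identity in $y$ in the direction $z$, finishing with the symmetry of the second Fr\'echet derivative. The paper instead first differentiates the invariance in the linear direction to obtain $V'(Ay_0)(Az)=V'(y_0)z$ for all $A\in\SO(d)$, and then evaluates the difference quotient $\bigl(V'(y_0+tSy_0)z-V'(y_0)z\bigr)/t$ by inserting $A=e^{-tS}$ and using $e^{-tS}(I_d+tS)=I_d+O(t^2)$; this trades your appeal to the symmetry of $V''$ for a small Taylor expansion. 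The two are essentially equivalent in content. A point in your favour: you are more explicit than the paper about why the manipulations are legitimate even though $y_0$ and $Sy_0$ need not be bounded --- the reduction to the finite-dimensional restriction $\tilde V$ on $(\R^d)^{\RR_V}$, together with the observation that both sides of the claimed identity only see $\chi_{\RR_V}z$ and $\chi_{\RR_V}(Sy_0)$, is exactly the reason the extended functionals of Definition~\ref{Definition:potential} make the statement meaningful under the standing finite-interaction-range assumption of this section (compare Remark~\ref{Remark:DVRot}, which handles the infinite-range case by restricting $S$ so that $Sy_0\in L^\infty$). No gap.
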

\begin{proof}
By \ref{item:Rotation} for all $z\colon\Gstar\to\R^d$ and $A\in\SO(d)$ we have
\begin{equation}\label{eq:Vtwofinite}
V'(Ay_0)(Az)=\lim_{t\to 0}\frac{V(Ay_0+tAz)-V(Ay_0)}t=\lim_{t\to0}\frac{V(y_0+tz)-V(y_0)}t=V'(y_0)z.
\end{equation}
For all $S\in\Skew(d)$ and $z\colon\Gstar\to\R^d$ we have
\begin{align*}
V''(y_0)(Sy_0,z)&=\lim_{t\to0}\frac{V'(y_0+tSy_0)z-V'(y_0)z}t\\
&=\lim_{t\to0}\frac{V'(\euler^{-tS}(y_0+tSy_0))(\euler^{-tS}z)-V'(y_0)z}t\\
&=\lim_{t\to0}\frac{V'(y_0)((I_d-tS)z)-V'(y_0)z}t
=-V'(y_0)(Sz),
\end{align*}
where we used \eqref{eq:Vtwofinite} in the second step and Taylor's theorem in the third step.
\end{proof}
\begin{Remark}\label{Remark:DVRot}
If $V$ does not have finite interaction range, then for all $S\in\Skew(d_1)\oplus\{0_{d_2,d_2}\}$ and $z\in L^\infty(\Gstar,\R^d)$ we have
\[V''(y_0)(Sy_0,z)=-V'(y_0)(Sz).\]
The proof is analogous since $S(g\cdot x_0)=S\rot(g)x_0$ for all $g\in\G$ and thus $Sy_0\in L^\infty(\Gstar,\R^d)$ for all $S\in\Skew(d_1)\oplus\{0_{d_2,d_2}\}$.
\end{Remark}
We can now prove Theorem~\ref{Theorem:BoundedThree}.
\begin{proof}[Proof of Theorem~\ref{Theorem:BoundedThree}] 
Suppose that $V'(y_0)=0$.
Let $u\in\UPer$ and $N\in\MM$ such that $u$ is $\T^N$-periodic.
Since $\rot(h')(\nabla_\RR u(g))(h')=\rot(h'')(\nabla_\RR u(g))(h'')$ for $g\in\CC_N$ and $h',h''\in\RR$ with $h'\G_{x_0}=h''\G_{x_0}$, there exists some $\T^N$-periodic $S\in\Per(\G,\Skew(d))$ such that
\[\nabla_\RR u(g)=\pi_{\Urot\RR}(\nabla_\RR u(g))+\parens[\big]{\rot(h)^{\mathsf T}S(g)(h\cdot x_0-x_0)}_{h\in\RR}\qquad\text{for all }g\in\CC_N,\]
where $\pi_{\Urot\RR}$ is the orthogonal projection on $\{v\colon\RR\to\R^d\}$ with respect to the norm $\norm\fdot$ with kernel $\Urot\RR$.
In the following, $C>0$ denotes a sufficiently large constant, which is independent of $u$, and may vary from line to line.
Let $\RRVset/\G_{x_0}\subset\Gstar$ be a finite interaction range of $V$.
By Theorem~\ref{Theorem:StrongerEquivalence}\ref{Theorem:EquivalenceAll} we may assume that $\RRVset\subset\RR$.
Note that
\begin{equation}\label{eq:eobc}
\abs[\big]{V''(y_0)(z,z)}\le C\norm{z|_{\RRVset}}^2\le C\norm{z|_{\RR}}^2\qquad\text{for all }z\colon\Gstar\to\R^d.
\end{equation}
We have
\begin{align*}
&\abs[\big]{E''(\xx)(u,u)}
=\abs[\Big]{\frac1{\abs{\CC_N}}\sum_{g\in\CC_N}V''(y_0)\parens[\big]{\parens[\big]{\rot(h)\proj{u}(gh)-\proj{u}(g)}_{h\in\Gstar}^2}} \nonumber\\
\begin{split}
&\quad=\abs[\Big]{\frac1{\abs{\CC_N}}\sum_{g\in\CC_N}V''(y_0)\parens[\big]{\parens[\big]{\rot(h)\proj{u}(gh)-u(g)-S(g)(h\cdot x_0-x_0)}_{h\in\Gstar}^2}} 
\end{split}\nonumber\\
&\quad\le\frac C{\abs{\CC_N}}\sum_{g\in\CC_N}\norm[\big]{\pi_{\Urot\RR}(\nabla_\RR u(g))}^2
=C\nablanorm u\RR^2
\le C\norm u_\RR^2,\nonumber%\label{eq:rtl}
\end{align*}
where we used Lemma~\ref{Lemma:FrechetE} in the first step, Lemma~\ref{Lemma:PropertiesV} in the second step, \eqref{eq:eobc} in the third step and Theorem~\ref{Theorem:StrongerEquivalence}\ref{Theorem:EquivalenceAll} in the last step.
As $E''(\xx)$ is symmetric, the assertion follows.
\end{proof}
\subsubsection{Strong estimates for finite and space filling structures}
We now turn to the proof of Theorem~\ref{Theorem:FiniteSpace}. 
\begin{Proposition}\label{Proposition:EUiso}
Suppose that $E'(\xx)=0$.
Then we have
\[E''(\xx)(u,v)=0\qquad\text{for all }u\in\UIso\cap\UPer\text{ and }v\in\UPer.\]
\end{Proposition}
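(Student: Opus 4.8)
The guiding principle is that $E$ is invariant under ambient rigid motions and that the infinitesimally rigid displacements form the tangent space to the orbit of rigid deformations (Proposition~\ref{Proposition:Manifold}); hence at a critical point the Hessian ought to annihilate these directions. To make this concrete I would start from the explicit formula for $E''(\xx)$ in Lemma~\ref{Lemma:FrechetE} and compute the inner argument $\parens[\big]{\rot(h)u(gh)-u(g)}_{h\in\Gstar}$ for $u\in\UIso\cap\UPer$. Writing $\rot(g)u(g)=a+S(g\cdot x_0-x_0)$ with $S\in\Skew(d)$ and using $\rot(gh)=\rot(g)\rot(h)$ together with $(gh)\cdot x_0-g\cdot x_0=\rot(g)(h\cdot x_0-x_0)$, the translational constant $a$ cancels and one is left with
\[
\parens[\big]{\rot(h)u(gh)-u(g)}_{h\in\Gstar}=S_g\,y_0,\qquad S_g:=\rot(g)^{\mathsf T}S\rot(g)\in\Skew(d).
\]
Thus the purely translational part of $u$ contributes nothing, and only the rotational datum $S$ survives.

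Plugging this into the bilinear form of Lemma~\ref{Lemma:FrechetE} and applying Lemma~\ref{Lemma:PropertiesV} to each summand (with the skew matrix $S_g$) turns the second derivative into a first derivative:
\[
E''(\xx)(u,v)=\frac1{\abs{\CC_N}}\sum_{g\in\CC_N}V''(y_0)(S_g y_0,z_g)=-\frac1{\abs{\CC_N}}\sum_{g\in\CC_N}V'(y_0)(S_g z_g),
\]
where $z_g=(\rot(h)v(gh)-v(g))_{h\in\Gstar}$. So the proposition reduces to the vanishing of this last sum. Morally it is $-E'(\xx)$ evaluated at the field $g\mapsto S_g v(g)$: since $\rot(h)S_{gh}=S_g\rot(h)$ one checks $\rot(h)\,S_{gh}v(gh)-S_g v(g)=S_g z_g(h)$, so the sum equals $\sum_{g\in\CC_N}V'(y_0)\big((\rot(h)(S_{gh}v(gh))-S_g v(g))_h\big)$, which would be $E'(\xx)$ of $g\mapsto S_g v(g)$ and hence $0$ by Corollary~\ref{Corollary:eeZero}.

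The delicate point, which I expect to be the main obstacle, is that the auxiliary field $g\mapsto S_g v(g)$ need \emph{not} be $\T^N$-periodic when $\G$ is infinite and of affine dimension $<d$: the conjugate $S_g=\rot(g)^{\mathsf T}S\rot(g)$ depends genuinely on the representative $g$ and not only on $g\T^N$, so one cannot simply quote $E'(\xx)=0$. I would therefore prove the vanishing directly, exploiting $\ee=0$ in the equivalent form $\sum_h\partial_h V(y_0)\rot(h)=\sum_h\partial_h V(y_0)$ together with the $\T^N$-periodicity of $v$ and a reindexing of the double sum over $\G$ modulo $\T^N$ (right multiplication by $h$ permutes the cosets of $\G/\T^N$, and $v$ descends to $\G/\T^N$). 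This is the infinite, lower-dimensional counterpart of the rigid-motion invariance identity \eqref{eq:manifold} used in the proof that $\xx$ is a local minimum: there, for finite $\G$ the gauge-transformed field $v\mapsto(\rot(\cdot)^{\mathsf T}\exp(tS)^{\mathsf T}\rot(\cdot)\,v(\cdot))$ is automatically periodic, so $\xx+u^{(t)}$ is seen to remain critical along the rigid-motion flow and differentiation in $t$ gives $E''(\xx)(u,v)=0$ at once. Carrying this periodicity bookkeeping through the reindexing — equivalently, verifying that the representative-dependence of $S_g$ integrates out against $\ee=0$ — is the heart of the argument, and is what must be handled with care rather than by a direct appeal to translation invariance.
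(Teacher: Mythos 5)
Your route is the paper's: compute $\parens[\big]{\rot(h)u(gh)-u(g)}_{h\in\Gstar}=\parens[\big]{\rot(g)^{\mathsf T}S\rot(g)(h\cdot x_0-x_0)}_{h\in\Gstar}$, use Lemma~\ref{Lemma:PropertiesV} to trade $V''(y_0)(S_gy_0,\fdot)$ for $-V'(y_0)(S_g\fdot)$, recognize the result as $E'(\xx)$ applied to the auxiliary field $g\mapsto S_gv(g)$, and finish with $\ee=0$. You also correctly isolate the one delicate step — but you leave it open, and your diagnosis of it is wrong. You assert that $S_g=\rot(g)^{\mathsf T}S\rot(g)$ ``depends genuinely on the representative $g$ and not only on $g\T^N$'' and propose that this dependence should somehow ``integrate out against $\ee=0$''. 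In fact there is no such dependence: the $\T^N$-periodicity of $u$ itself forces $\rot(gt)^{\mathsf T}S\rot(gt)=\rot(g)^{\mathsf T}S\rot(g)$ for all $g\in\G$ and $t\in\T^N$. Hence $g\mapsto S_gv(g)$ is a genuine element of $\UPer$, the reindexing over $\CC_N$ modulo $\T^N$ is legitimate, and the sum collapses to $\ee\cdot\sum_{g\in\CC_N}\rot(g)^{\mathsf T}S\rot(g)v(g)=0$. Without this identity the reindexing you invoke does not go through, so this is the missing ingredient rather than a bookkeeping detail.

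To close it: since $\T^N$ is normal, $u(gth)=u(gh)$ for all $h$, so the discrete gradients of $u$ at $g$ and at $gt$ over a set $\RR$ with Property~$\propone$ coincide; this yields $\rot(gt)^{\mathsf T}S\rot(gt)(h\cdot x_0-x_0)=\rot(g)^{\mathsf T}S\rot(g)(h\cdot x_0-x_0)$ for all $h\in\RR$, i.e.\ the two skew matrices agree on $\{0_{d-\daff}\}\times\R^{\daff}$. Agreement on this subspace does \emph{not} determine a general skew matrix (the upper-left $(d-\daff)\times(d-\daff)$ block is free), so your generic ``$S\in\Skew(d)$'' is too weak. One must use that $u\in\UIso\cap\UPer$, so that by Proposition~\ref{Proposition:UtransUrotbig} the matrix $S$ can be chosen with vanishing upper-left block (and vanishing $d_2$-blocks); since $\rot(g)=I_{d-\daff}\oplus B$ by Lemma~\ref{Lemma:Shelp2}, the conjugates $\rot(g)^{\mathsf T}S\rot(g)$ inherit this structure and are therefore determined by their last $\daff$ columns, which upgrades the subspace identity to equality of matrices. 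With that established, your computation closes exactly as you sketch it, and the finite-interaction-range assumption of this section justifies applying Lemma~\ref{Lemma:PropertiesV} with the unbounded field $S_gy_0$.
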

\begin{proof}
Let $u\in\UIso\cap\UPer$ and $v\in\UPer$.
By Proposition~\ref{Proposition:UtransUrotbig} there exist some $a\in\R^d$, $A_1\in\R^{(d-\daff)\times(\daff-d_2)}$ and $A_2\in\Skew(\daff-d_2)$ such that
\begin{equation}\label{eq:EUiso1}
\rot(g)\proj{u}(g)=a+S(g\cdot x_0-x_0)\qquad\text{for all }g\in\G,
\end{equation}
where $S=\parens[\bigg]{\begin{matrix}\mbox{\footnotesize $0$}&\mbox{\footnotesize $A_3$}\\\mbox{\footnotesize $-A_3^{\mathsf T}$}&\mbox{\footnotesize $A_4$}\end{matrix}}\in\Skew(d_1)\oplus\{0_{d_2,d_2}\}$, $A_3=(\begin{matrix} A_1 & 0_{d-\daff,d_2} \end{matrix})$ and $A_4=A_2\oplus0_{d_2,d_2}$. 
Let $N\in\MM$ such that $u$ and $v$ are $\T^N$-periodic. 
Since $\RR$ is admissible and in particular $\aff(\RR\cdot x_0)=\aff(\G\cdot x_0)$, the matrix $C\in\R^{\daff\times\abs{\Rcoset}}$ defined by
\[ \parens[\bigg]{\begin{matrix}0\\C\end{matrix}}=(h\cdot x_0-x_0)_{h\in\Rcoset} \]
has rank $\daff$. 
For all $g\in\G$ and $t\in\T^N$ we have
\begin{align}\label{eq:EUiso2}
&\parens[\big]{\rot(h)\proj{u}(gh)-\proj{u}(g)}_{h\in\Rcoset}
=\parens[\big]{\rot(h)\proj{u}(gth)-\proj{u}(gt)}_{h\in\Rcoset}\nonumber\\
&\qquad=\parens[\big]{\rot(gt)^{\mathsf T}S\rot(gt)(h\cdot x_0-x_0)}_{h\in\Rcoset}
=\parens[\bigg]{\begin{matrix}
A_3B_1B_2C\\
B_2^{\mathsf T}B_1^{\mathsf T}A_4B_1B_2C
\end{matrix}},
\end{align}
where we used the $\T^N$-periodicity of $u$ in the first, \eqref{eq:EUiso1} in the second step and $B_1,B_2\in\O(\daff)$ such that $\rot(g)=I_{d-\daff}\oplus B_1$ and $\rot(t)=I_{d-\daff}\oplus B_2$.
Since the left hand side of \eqref{eq:EUiso2} is independent of $t$ and $C$ has full rank, \eqref{eq:EUiso2} implies
\begin{equation}\label{eq:EUiso3}
\rot(gt)^{\mathsf T}S\rot(gt)=\rot(g)^{\mathsf T}S\rot(g)\qquad\text{for all }g\in\G\text{ and }t\in\T^N.
\end{equation}
We have
\begin{align*}
E''(\xx)(u,v)=\frac1{\abs{\CC_N}}\sum_{g\in\CC_N}V''(y_0)\parens[\big]{\parens[\big]{\rot(h)\proj{u}(gh)-\proj{u}(g)}_{h\in\Gstar},
\parens[\big]{\rot(h)\proj{v}(gh)-\proj{v}(g)}_{h\in\Gstar}}.
\end{align*}
We now split $\parens[\big]{\rot(h)\proj{v}(gh)-\proj{v}(g)}_{h\in\Gstar}=\parens[\big]{\rot(h)\proj{v}(gh)}_{h\in\Gstar}-\parens[\big]{\proj{v}(g)}_{h\in\Gstar}$. To further compute the first term for each $g\in\CC_N$ and $h'\in\G$ we fix $g_{g,h'}'\in g\T^N$ to be specified later and notice that, due to the periodicity of $u$ and $v$, $u(gh)=u(g_{g,h'}'h)$ and $v(gh)=v(g_{g,h'}'h)$ for all $g\in\CC_N$ and $h\in\G$. Then
\begin{align*}
&\frac1{\abs{\CC_N}}\sum_{g\in\CC_N}V''(y_0)\parens[\Big]{\parens[\Big]{\rot(h)\proj{u}(gh)-\proj{u}(g)}_{h\in\Gstar},\parens[\big]{\rot(h)\proj{v}(gh)}_{h\in\Gstar}}\\
&\quad=\frac1{\abs{\CC_N}}\sum_{g\in\CC_N}V''(y_0)\parens[\Big]{\parens[\big]{\rot(g)^{\mathsf T}S\rot(g)(h\cdot x_0-x_0)}_{h\in\Gstar},\parens[\Big]{\frac1{\abs{\G_{x_0}}}\sum_{h'\in h}\rot(h')v(gh')}_{h\in\Gstar}}\\
&\quad=-\frac1{\abs{\CC_N}}\sum_{g\in\CC_N}V'(y_0)\parens[\Big]{\frac1{\abs{\G_{x_0}}}\sum_{h'\in h}\rot(g_{g,h'}')^{\mathsf T}S\rot(g_{g,h'}')\rot(h')v(g_{g,h'}'h')}_{h\in\Gstar}\\
&\quad=-\frac1{\abs{\CC_N}}V'(y_0)\parens[\Big]{\frac1{\abs{\G_{x_0}}}\sum_{h'\in h}\rot(h')\sum_{g\in\CC_N}\rot(g)^{\mathsf T}S\rot(g)v(g)}_{h\in\Gstar},
\end{align*}
where we used Lemma~\ref{Lemma:PropertiesV} and \eqref{eq:EUiso3} in the second step and
for each $h'\in\G$ we have chosen $(g_{g,h'}')_{g\in\CC_N}$ in such a way that $\set{g_{g,h'}'}{g\in\CC_N}=\CC_Nh'^{-1}$ so that by the periodicity of $v$ 
\begin{align*}
\sum_{g\in\CC_N}\rot(g_{g,h'}')^{\mathsf T}S\rot(g_{g,h'}')\rot(h')v(h')&=\sum_{g\in\CC_N}\rot(gh'^{-1})^{\mathsf T}S\rot(gh'^{-1})\rot(h')v(g)\\
&=\sum_{g\in\CC_N}\rot(h')\rot(g)^{\mathsf T}S\rot(g)v(g).
\end{align*}
For the second contribution we proceed likewise to arrive at
\begin{align*}
E''(\xx)(u,v)&=-\frac1{\abs{\CC_N}}V'(y_0)\parens[\Big]{\parens[\big]{L_h-L_{\G_{x_0}}}\sum_{g\in\CC_N}\rot(g)^{\mathsf T}S\rot(g)v(g)}_{h\in\Gstar}
=0
\end{align*}
with the help of Lemma~\ref{Lemma:energyderivativeY}.
\end{proof}
\begin{Remark}\label{Remark:Assumption}
\begin{enumerate}
\item In the above proposition the assumption $E'(\xx)=0$ is essential, see Example~\ref{Example:DEUiso}.
\item\label{item:Assumption} In the above proposition the assumption that $V$ has finite interaction range is not necessary.
Using Remark~\ref{Remark:DVRot} instead of Lemma~\ref{Lemma:PropertiesV}, the proof is analogous.
\end{enumerate}
\end{Remark}
We can now prove Theorem~\ref{Theorem:FiniteSpace}. 
\begin{proof}[Proof of Theorem~\ref{Theorem:FiniteSpace}]
\ref{Theorem-item:Finite} We have $\lambdaa=\lambdanewa$ since $\G$ being finite entails $\norm\fdot_\RR=\newnorm\fdot\RR$.
Let $U$ be a subspace of $\UPer$ such that $\UPer=\UIso\oplus U$.
By Theorem~\ref{Theorem:StrongerEquivalence}\ref{Theorem:EquivalenceAll} the seminorm $\norm\fdot_\RR$ is a norm on $U$ and thus there exists a constant $C>0$ such that $\norm\fdot_\infty\le C\norm\fdot_\RR$ on $U$.
We have
\begin{align*}
\sup&\set[\big]{\abs{E''(\xx)(u,u)}}{u\in\UPer, \norm u_\RR\le1}=\sup\set[\big]{\abs{E''(\xx)(u,u)}}{u\in U,\norm u_\RR\le1}\\
&\le \sup\set[\big]{\abs{E''(\xx)(u,u)}}{u\in U,\norm u_\infty\le C}
<\infty,
\end{align*}
where we used Proposition~\ref{Proposition:EUiso} and Theorem~\ref{Theorem:StrongerEquivalence}\ref{Theorem:EquivalenceAll} in the first step and in the last step that $E''(\xx)$ is bounded with respect to $\norm\fdot_\infty$ by Lemma~\ref{Lemma:FrechetE}.
Since $E''(\xx)$ is symmetric, the assertion follows.

\ref{Theorem-item:Space} This is clear by Proposition~\ref{Proposition:BoundedOne} and Theorem~\ref{Theorem:StrongerEquivalence}\ref{Theorem:seminormequivalence}.
\end{proof}

\begin{Example}\label{Example:DEUiso}
We present an example such that $E''(\xx)(u,u)<0$ for some $u\in\UIso\cap\UPer$.
In particular we have $\lambdaa=\lambdanewa=-\infty$, $E''(\xx)$ is not bounded with respect to $\newnorm\fdot\RR$, and in Proposition~\ref{Proposition:EUiso} and Theorem~\ref{Theorem:FiniteSpace}\ref{Theorem-item:Finite} the condition $E'(\xx)=0$ cannot be dropped.

Let $d=d_2=2$, $p=\iso{-I_2}0\in\E(2)$, $\G=\{\id,p\}<\E(2)$, $x_0=e_1\in\R^2$ and
\[V\colon\R^2\to\R,\quad x\mapsto-\norm x^2.\]
We define the function $u\in\UIso$ by $\rot(g)u(g)=\parens[\Big]{\begin{matrix}\mbox{\footnotesize $0$}&\mbox{\footnotesize $1$}\\\mbox{\footnotesize $-1$}&\mbox{\footnotesize $0$}\end{matrix}}(g\gdot x_0-x_0)$ for all $g\in\G$.
We have $\G_{x_0}=\{\id\}$, $y_0=p\gdot x_0-x_0=-2e_1$ and by Lemma~\ref{Lemma:FrechetE}
\begin{align*}
E''(\xx)(u,u)&=\frac1{\abs\G}\sum_{g\in\G}V''(y_0)(-u(gp)-u(g),-u(gp)-u(g))\\
&=V''(y_0)(u(\id)+u(p),u(\id)+u(p))
=-2\norm{u(\id)+u(p)}^2
=-8,
\end{align*}
while $\norm u_\RR=\newnorm u\RR=0$, from which the above assertions follow. 
\end{Example}

\subsubsection{Estimates for lower dimensional infinite structures}
We now give the proof of Theorem~\ref{Theorem:dtwodtwo} which turns out rather demanding. 
\begin{Definition}
For all $u\in\UPer$ we define the function
\[S_u\in L^\infty\parens[\bigg]{\G,\set[\bigg]{\parens[\bigg]{\begin{smallmatrix}0&A_1&0\\-A_1^{\mathsf T}&A_2&0\\0&0&0\end{smallmatrix}}}{A_1\in\R^{(d-\daff)\times(\daff-d_2)},A_2\in\Skew(\daff-d_2)}}\]
by the condition
\[\nabla_\RR u(g)=\pi_{\Unewrot\RR}(\nabla_\RR u(g))+\parens[\big]{\rot(gh)^{\mathsf T}S_u(g)\rot(g)(h\cdot x_0-x_0)}_{h\in\RR}\qquad\text{for all }g\in\G,\]
where $\pi_{\Unewrot\RR}$ is the orthogonal projection on $\{v\colon\RR\to\R^d\}$ with respect to the norm $\norm\fdot$ with kernel $\Unewrot\RR$.
\end{Definition}
\begin{Remark}
For all $u\in\UPer$ the function $S_u$ is well-defined:
Let $g\in\G$. 
By Lemma~\ref{Lemma:WLOGdOStwo} there exist $B_1\in\O(\daff-d_2)$ and $B_2\in\O(d_2)$ such that $\rot(g)=I_{d-\daff}\oplus B_1\oplus B_2$. 
Since $\pi(\nabla_\RR u(g))=\nabla_\RR u(g)$, setting $T=\R^{(d-\daff)\times(\daff-d_2)}\times\Skew(\daff-d_2)$ and writing $A=(A_1,A_2)$ for $A\in T$, by Proposition~\ref{Proposition:UtransUrotbig} we have
\begin{align*}
&\pi(\Unewrot\RR)=\set[\Big]{\RR\to\R^d,h\mapsto\rot(h)^{\mathsf T}\parens[\Big]{\parens[\Big]{\begin{smallmatrix}0&A_1\\-A_1^{\mathsf T}&A_2\end{smallmatrix}}\oplus0_{d_2,d_2}}(h\cdot x_0-x_0)}{A\in T}\\
&\quad=\set[\Big]{\RR\to\R^d,h\mapsto\rot(h)^{\mathsf T}\parens[\Big]{\parens[\Big]{\begin{smallmatrix}0&B_1^{\mathsf T}A_1B_2\\-B_2^{\mathsf T}A_1^{\mathsf T}B_1&B_2^{\mathsf T}A_2B_2\end{smallmatrix}}\oplus0_{d_2,d_2}}(h\cdot x_0-x_0)}{A\in T}\\
&\quad=\set[\Big]{\RR\to\R^d,h\mapsto\rot(gh)^{\mathsf T}\parens[\Big]{\parens[\Big]{\begin{smallmatrix}0&A_1\\-A_1^{\mathsf T}&A_2\end{smallmatrix}}\oplus0_{d_2,d_2}}\rot(g)(h\cdot x_0-x_0)}{A\in T}.
\end{align*}
\end{Remark}
\begin{Lemma}\label{Lemma:Three}
For all $g_0\in\G$ there exists a constant $C>0$ such that
\[\frac1{\abs{\CC_N}}\sum_{g\in\CC_N}\norm{S_u(gg_0)-S_u(g)}^2\le C\newnorm u\RR^2\]
for all $u\in\UPer$ and $N\in\MM$ such that $u$ is $\T^N$-periodic.
\end{Lemma}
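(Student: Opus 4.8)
The plan is to transfer the claim to the gradient seminorm $\newnablanorm\fdot{\RR''}$ on a suitably enlarged range and then prove a pointwise-in-$g$ estimate via a discrete chain rule for $\nabla_\RR u(gg_0)$. First I would fix $\RR'' := \RR \cup g_0\RR \cup \{g_0\}$. Since $\RR$ has Property~$\proptwo$ and $\RR\subset\RR''$ is finite, the witnesses for $\RR$ still satisfy the inclusions required in Definition~\ref{Definition:Property}, so $\RR''$ also has Property~$\proptwo$. By Theorem~\ref{Theorem:StrongerEquivalence}\ref{Theorem:NewEquivalenceAll} it then suffices to establish the pointwise bound $\norm{S_u(gg_0)-S_u(g)}\le C\norm{\pi_{\Unewrot{\RR''}}(\nabla_{\RR''}u(g))}$ with $C$ independent of $g$ and $u$; averaging over $g\in\CC_N$ gives the right-hand side $C^2\newnablanorm u{\RR''}^2 \le C'\newnorm u\RR^2$.

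The central algebraic identity is a chain rule. Inserting $u(gg_0h)=\nabla_{\RR''}u(g)(g_0h)+\rot(g_0h)^{\mathsf T}u(g)$ and $u(gg_0)=\nabla_{\RR''}u(g)(g_0)+\rot(g_0)^{\mathsf T}u(g)$ into the definition of $\nabla_\RR u(gg_0)(h)=u(gg_0h)-\rot(h)^{\mathsf T}u(gg_0)$ and using $\rot(g_0h)^{\mathsf T}=\rot(h)^{\mathsf T}\rot(g_0)^{\mathsf T}$, the contributions of $u(g)$ cancel and one obtains
\[\nabla_\RR u(gg_0)(h) = \nabla_{\RR''}u(g)(g_0h) - \rot(h)^{\mathsf T}\nabla_{\RR''}u(g)(g_0)\qquad(h\in\RR).\]
This exhibits $\nabla_\RR u(gg_0)$ as a fixed (i.e.\ $g$-independent) bounded linear image $T_{g_0}(\nabla_{\RR''}u(g))$ of the gradient at $g$.

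Next I would write $S_u$ as a bounded linear functional of the gradient. Comparing the defining identity of $S_u$ with the parametrization $\phi_4$ of $\Unewrot\RR$ in Proposition~\ref{Proposition:UtransUrotbig} (the conjugation relating the two parametrizations is exactly the one computed in the remark preceding the statement), one gets $S_u(g) = \rot(g)\,\phi_4^{-1}\big((I-\pi_{\Unewrot\RR})\nabla_\RR u(g)\big)\,\rot(g)^{\mathsf T}$, where $\phi_4^{-1}$ is read as valued in $\Skew_{0,0}(d)$. Since conjugation by the orthogonal matrix $\rot(g)=I_{d-\daff}\oplus B_1\oplus B_2$ (Lemma~\ref{Lemma:WLOGdOStwo}) preserves the Frobenius norm and the block structure, these maps are bounded uniformly in $g$. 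Combining with the chain rule yields $S_u(gg_0)-S_u(g)=L_{g_0,g}(\nabla_{\RR''}u(g))$ with $\norm{L_{g_0,g}}\le\norm{\phi_4^{-1}}(\norm{T_{g_0}}+1)$ uniformly in $g$.

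The decisive step is that $L_{g_0,g}$ annihilates $\Unewrot{\RR''}$. If $\nabla_{\RR''}u(g)\in\Unewrot{\RR''}$, write it in the $g$-twisted form $h''\mapsto\rot(gh'')^{\mathsf T}S\rot(g)(h''\cdot x_0-x_0)$; then $S_u(g)=S$ directly, while the chain rule together with $g_0h\cdot x_0-g_0\cdot x_0=\rot(g_0)(h\cdot x_0-x_0)$ gives $\nabla_\RR u(gg_0)(h)=\rot(gg_0h)^{\mathsf T}S\rot(gg_0)(h\cdot x_0-x_0)$, so $S_u(gg_0)=S$ as well and the difference vanishes. Because $\Unewrot{\RR''}=\ker\pi_{\Unewrot{\RR''}}$, this forces $L_{g_0,g}=L_{g_0,g}\circ\pi_{\Unewrot{\RR''}}$, hence $\norm{S_u(gg_0)-S_u(g)}\le\norm{L_{g_0,g}}\,\norm{\pi_{\Unewrot{\RR''}}(\nabla_{\RR''}u(g))}$, which is the sought pointwise estimate. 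I expect the main obstacle to be exactly the uniformity of $\norm{L_{g_0,g}}$ in $g$ — resolved by observing that all $g$-dependence enters only through orthogonal conjugation by $\rot(g)$ — together with the verification that the difference $S_u(gg_0)-S_u(g)$ genuinely cancels on global infinitesimal rotations; the range bookkeeping and the final appeal to Theorem~\ref{Theorem:StrongerEquivalence}\ref{Theorem:NewEquivalenceAll} are then routine.
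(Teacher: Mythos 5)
Your proposal is correct, and it reaches the key pointwise estimate by a genuinely different mechanism than the paper, although both arguments share the same skeleton: enlarge the range so that $g_0$ and $g_0\RR$ (resp.\ $g_0\RR'$) become available, and then invoke Theorem~\ref{Theorem:StrongerEquivalence}\ref{Theorem:NewEquivalenceAll} to pass back to $\newnorm\fdot\RR$. Where you differ is in how the variation of $S_u$ is extracted from the gradient. The paper selects three concrete squared contributions to $\newnablanorm u\RR^2$ (the component $h=g_0$ at $g$, the components $h\in g_0\RR'$ at $g$, and the components $h\in\RR'$ at $gg_0$ after re-indexing the sum by $\CC_Ng_0$), adds them so that all occurrences of $u$ telescope away, and is left with $(S_u(g)-S_u(gg_0))\rot(gg_0)\bigl(\begin{smallmatrix}0\\A\end{smallmatrix}\bigr)$, which it inverts using the minimum singular value of the full-rank matrix $A$ coming from Property~$\propone$. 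You instead observe that $v\mapsto S_u(gg_0)-S_u(g)$ is, via your discrete chain rule $\nabla_\RR u(gg_0)(h)=\nabla_{\RR''}u(g)(g_0h)-\rot(h)^{\mathsf T}\nabla_{\RR''}u(g)(g_0)$ and the conjugated parametrization of $\Unewrot\RR$, a linear map $L_{g_0,g}$ of $\nabla_{\RR''}u(g)$ whose operator norm is uniform in $g$ (all $g$-dependence being orthogonal conjugation) and which annihilates $\Unewrot{\RR''}$ (your transport computation $(g_0h)\cdot x_0-g_0\cdot x_0=\rot(g_0)(h\cdot x_0-x_0)$ is exactly the analogue of the paper's cancellation), so it factors through $\pi_{\Unewrot{\RR''}}$. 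Your route is more conceptual and isolates the structural reason the lemma holds — the difference of extracted skew matrices only sees the non-rigid part of the enlarged gradient — at the cost of a less explicit constant; the paper's computation is more pedestrian but yields the constant $c\,\sigma_{\min}^2(A)/2$ directly. All the points one might worry about in your version (injectivity of $\phi_4$ from Proposition~\ref{Proposition:UtransUrotbig}, preservation of the admissible block structure under conjugation by $\rot(g)$, Property~$\proptwo$ of $\RR''$, and the fact that $L_{g_0,g}$ must vanish on all of $\Unewrot{\RR''}$ and not merely on gradients) are correctly addressed or follow immediately from the cited results.
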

\begin{proof}
Let $g_0\in\G$.
Since $\RR$ is admissible and so $\aff(\RR\cdot x_0)=\aff(\G\cdot x_0)$, there exists some $\RR'\subset\RR$ and $A\in\GL(\daff)$ such that
\[(g\cdot x_0-x_0)_{g\in\RR'}=\parens[\bigg]{\begin{matrix}0\\A\end{matrix}}.\]
By Theorem~\ref{Theorem:StrongerEquivalence}\ref{Theorem:NewEquivalenceAll} without loss of generality, we may assume that $g_0\G_{x_0}\cup g_0\RR'\G_{x_0}\subset\RR$.
Let $u\in\UPer$ and $N\in\MM$ such that $u$ is $\T^N$-periodic.
Using that $g_0\in\RR$ we have
\begin{align}
\newnablanorm u\RR^2&=\frac1{\abs{\CC_N}}\sum_{g\in\CC_N}\norm[\Big]{\nabla_\RR u(g)-\parens[\big]{\rot(gh)^{\mathsf T}S_u(g)\rot(g)(h\cdot x_0-x_0)}_{h\in\RR}}^2\nonumber\\
&\ge\frac1{\abs{\CC_N}}\sum_{g\in\CC_N}\norm[\big]{\rot(g_0)\proj{u}(gg_0)-\proj{u}(g)-\rot(g)^{\mathsf T}S_u(g)\rot(g)(g_0\cdot x_0-x_0)}^2.\label{eq:ThreeOne}
\end{align}
As $g_0\RR'\G_{x_0}\subset\RR$, we also have
\begin{align}
&\newnablanorm u\RR^2=\frac1{\abs{\CC_N}}\sum_{g\in\CC_N}\norm[\Big]{\nabla_\RR u(g)-\parens[\big]{\rot(gh)^{\mathsf T}S_u(g)\rot(g)(h\cdot x_0-x_0)}_{h\in\RR}}^2\nonumber\\
&\quad\ge\frac1{\abs{\CC_N}}\sum_{g\in\CC_N}\sum_{h\in\RR'}\norm[\big]{\rot(g_0h)\proj{u}(gg_0h)-\proj{u}(g)-\rot(g)^{\mathsf T}S_u(g)\rot(g)((g_0h)\cdot x_0-x_0)}^2.\label{eq:ThreeTwo}
\end{align}
Since $\CC_Ng_0$ is a representation set of $\G/\T^N$ and $\RR'\G_{x_0}\subset\RR$, we furthermore have
\begin{align}
&\newnablanorm u\RR^2=\frac1{\abs{\CC_N}}\sum_{g\in\CC_N}\norm[\Big]{\nabla_\RR u(gg_0)-\parens[\big]{\rot(gg_0h)^{\mathsf T}S_u(gg_0)\rot(gg_0)(h\cdot x_0-x_0)}_{h\in\RR}}^2\nonumber\\
&\quad\ge\frac1{\abs{\CC_N}}\sum_{g\in\CC_N}\sum_{h\in\RR'}\norm[\big]{\rot(g_0h)\proj{u}(gg_0h)-\rot(g_0)\proj{u}(gg_0)
-\rot(g)^{\mathsf T}S_u(gg_0)\rot(gg_0)(h\cdot x_0-x_0)}^2.\label{eq:ThreeThree}
\end{align}
By \eqref{eq:ThreeOne}, \eqref{eq:ThreeTwo} and \eqref{eq:ThreeThree} there exists a constant $c>0$ (independent of $u$ and $N$) such that
\begin{align}
\newnablanorm u\RR^2&\ge\frac c{\abs{\CC_N}}\sum_{g\in\CC_N}\sum_{h\in\RR'}\norm[\big]{\rot(g_0)\proj{u}(gg_0)-\proj{u}(g)
-\rot(g)^{\mathsf T}S_u(g)\rot(g)(g_0\cdot x_0-x_0)\nonumber \\
&\qquad -\rot(g_0h)\proj{u}(gg_0h)+\proj{u}(g)
+\rot(g)^{\mathsf T}S_u(g)\rot(g)((g_0h)\cdot x_0-x_0)\nonumber\\
&\qquad +\rot(g_0h)\proj{u}(gg_0h)-\rot(g_0)\proj{u}(gg_0)
-\rot(g)^{\mathsf T}S_u(gg_0)\rot(gg_0)(h\cdot x_0-x_0)}^2\nonumber\\
&=\frac c{\abs{\CC_N}}\sum_{g\in\CC_N}\norm[\bigg]{(S_u(g)-S_u(gg_0))\rot(gg_0)\parens[\bigg]{\begin{matrix}0\\A\end{matrix}}}^2.\label{eq:ThreeFour}
\end{align}
By Lemma~\ref{Lemma:WLOGdOStwo} for all $g\in\CC_N$ there exist $B(g)\in\O(\daff)$, $T_1(g)\in\R^{(d-\daff)\times\daff}$ and $T_2(g)\in\Skew(\daff)$ such that
\[\rot(gg_0)=\parens[\bigg]{\begin{matrix}I_{d-\daff}&0\\0&B(g)\end{matrix}}\text{ and }S_u(g)-S_u(gg_0)=\parens[\bigg]{\begin{matrix}0&T_1(g)\\-T_1(g)^{\mathsf T}&T_2(g)\end{matrix}}.\]
By \eqref{eq:ThreeFour} we have
\begin{align}
\newnablanorm u\RR^2&\ge\frac c{\abs{\CC_N}}\sum_{g\in\CC_N}\parens[\Big]{\norm{T_1(g)B(g)A}^2+\norm{T_2(g)B(g)A}^2}\nonumber\\
&\ge\frac{c\sigma_{\min}^2(A)}{\abs{\CC_N}}\sum_{g\in\CC_N}\parens[\Big]{\norm{T_1(g)}^2+\norm{T_2(g)}^2}\nonumber\\
&\ge\frac{c\sigma_{\min}^2(A)}{2\abs{\CC_N}}\sum_{g\in\CC_N}\norm{S_u(g)-S_u(gg_0)}^2,\label{eq:ThreeFive}
\end{align}
where $\sigma_{\min}(A)>0$ denotes the minimum singular value of $A$.
Theorem~\ref{Theorem:StrongerEquivalence}\ref{Theorem:NewEquivalenceAll} and \eqref{eq:ThreeFive} imply the assertion.
\end{proof}
We are now in a position to prove Theorem~\ref{Theorem:dtwodtwo}. 
\begin{proof}[Proof of Theorem~\ref{Theorem:dtwodtwo}]
We first remark that for $d=1+d_2$ the assertion is a direct consequence of 
Theorem~\ref{Theorem:StrongerEquivalence}\ref{Theorem:NewEquivalenceAll} and Proposition~\ref{Proposition:BoundedOne} since then
\[\Unewrot\RR=\set[\big]{u\colon\RR\to\R^d}{\forall g\in\RR:\rot (g)\proj{u}(g)=0}\]
and thus $\newnablanorm\fdot\RR=\norm{\nabla_\RR\fdot}_2$. This establishes Remark~\ref{Remark:Hessianbounds}\ref{Remarkitem:donedtwo}. 

Since $E''(\xx)$ is symmetric, it suffices to show that there exists a constant $C>0$ such that
\begin{equation}\label{eq:TheoremZero}
E''(\xx)(u,u)\le C\newnorm u\RR^2\qquad\text{for all }u\in\UPer.
\end{equation}
Recall that $\MM=m_0\N$.
Let $\{t_1,\dots,t_{d_2}\}$ be a generating set of $\T^{m_0}$.
Without loss of generality we specifically choose 
\[\CC_{n{m_0}}=\bigcupdot_{n_1,\dots,n_{d_2}\in\{0,\dots,n-1\}}t_1^{n_1}\dots t_{d_2}^{n_{d_2}}\CC_{m_0}\qquad\text{for all }n\in\N.\]
For all $g\in\G$ there exist $n_{1,1},\dots,n_{\abs{\CC_{m_0}},d_2}\in\Z$ such that
\[\CC_{m_0}g=\bigcupdot_{i=1}^{\abs{\CC_{m_0}}} \{t_1^{n_{i,1}}\dots t_{d_2}^{n_{i,d_2}}h_i\},\]
where $h_1,\dots,h_{\abs{\CC_{m_0}}}$ are the elements of $\CC_{m_0}$.
Thus and since $\T^{m_0}$ is abelian, for all $g\in\G$ we have
\begin{equation}\label{eq:TheoremOne}
\lim_{n\to\infty}\frac{\abs{\CC_{n{m_0}}\cap(\CC_{n{m_0}}g)}}{\abs{\CC_{n{m_0}}}}=1.
\end{equation}
Let $\{B_1,\dots,B_m\}$ be an orthonormal basis of 
\[\set[\bigg]{\parens[\bigg]{\begin{matrix}0&A_1\\-A_1^{\mathsf T}&A_2\end{matrix}}\oplus 0_{d_2,d_2}}{A_1\in\R^{(d-\daff)\times(\daff-d_2)},A_2\in\Skew(\daff-d_2)}.\]
Let $u\in\UPer$ and $N\in\MM$ such that $u$ is $\T^N$-periodic. 
For all $i\in\{1,\dots,m\}$ and $g\in\G$ let $S_{u,i}(g)=\angles{S_u(g),B_i}B_i$. 
For all $n\in\N$ and $i\in\{1,\dots,m\}$ we define the $\T^{nN}$-periodic function $v_{u,n,i}\in\UPer$ by the condition 
\[\rot(g)v_{u,n,i}(g)=S_{u,i}(g)(g\cdot x_0-x_0)\qquad\text{for all }g\in\CC_{nN}.\]
Moreover let $v_{u,n}=\sum_{i=1}^mv_{u,n,i}$. 

Since $\trans(\G)\subset\{0_{d_1}\}\times\R^{d_2}$, for all $S\in\Skew(d_1)\times\{0_{d_2,d_2}\}$ and $g,h\in\G$ we have 
\begin{equation}\label{eq:TheoremSeven}
S(g\cdot x_0)=S\rot(g)x_0
\qquad\text{and}\qquad
S((gh)\cdot x_0)=S\rot(g)(h\cdot x_0).
\end{equation}
Since the bilinear form $E''(\xx)$ is positive semidefinite, for all $n\in\N$ we have 
\begin{equation}
E''(\xx)(u,u)\le 2E''(\xx)(u-v_{u,n},u-v_{u,n})+ 2m\sum_{i=1}^mE''(\xx)(v_{u,n,i},v_{u,n,i}).\label{eq:TheoremTwo}
\end{equation}
In the following, $C>0$ denotes a sufficiently large constant, which is independent of $u$ and may vary from line to line.
We write $\bar{S}_{u,i}(g)=\frac1{\abs{\G_{x_0}}}\sum_{h'\in\G_{x_0}}S_{u,i}(gh')$ and $\bar{S}_u(g)=\sum_{i=1}^m\bar{S}_{u,i}(g)$ for the averages of $S_{u,i}$, respectively, $S_u$ over cosets. Since $\pproj{v_{u,n}}(g)=\rot(g)^{\mathsf T}\bar{S}_{u,i}(g)(g\cdot x_0-x_0)$, we have
\begin{align*}
\limsup_{n\to\infty}&E''(\xx)(u-v_{u,n},u-v_{u,n})\le\limsup_{n\to\infty}C\norm[\big]{\nabla_\RR(u-v_{u,n})}_2^2\\
&=\limsup_{n\to\infty}\frac C{\abs{\CC_{nN}}}\sum_{g\in\CC_{nN}}\sum_{h\in\RR}\norm[\big]{\rot(h)\nabla_\RR u(g)(h)\\
&\qquad-\rot(g)^{\mathsf T}\bar{S}_u(gh)((gh)\cdot x_0-x_0)+\rot(g)^{\mathsf T}\bar{S}_u(g)(g\cdot x_0-x_0)}^2,
\end{align*}
where in the first step we used Proposition~\ref{Proposition:BoundedOne} and in the second \eqref{eq:TheoremOne}, which implies that $\lim_{n\to\infty}\abs{\set{g\in\CC_{nN}}{gh\in\CC_{nN}\text{ for all $h\in\RR$}}}/\abs{\CC_{nN}}=1$.) 
From \eqref{eq:TheoremSeven} and \eqref{eq:TheoremOne} it now follows that 
\begin{align}
\limsup_{n\to\infty}&E''(\xx)(u-v_{u,n},u-v_{u,n})\nonumber\\
&\le\limsup_{n\to\infty}\frac C{\abs{\CC_{nN}}}\sum_{g\in\CC_{nN}}\sum_{h\in\RR}\parens[\Big]{\norm[\big]{\rot(h)\nabla_\RR u(g)(h)-\rot(g)^{\mathsf T}S_u(g)\rot(g)(h\cdot x_0-x_0)}^2\nonumber\\
&\qquad+\norm{\bar{S}_u(gh)-\bar{S}_u(g)}^2+\norm{\bar{S}_u(g)-S_u(g)}^2}
\le C\newnorm u\RR^2,\label{eq:TheoremThree}
\end{align}
where the last estimate is implied by Lemma~\ref{Lemma:Three} and Theorem~\ref{Theorem:StrongerEquivalence}\ref{Theorem:NewEquivalenceAll}.

Let $i\in\{1,\dots,m\}$.
Using Lemma~\ref{Lemma:FrechetE}, \eqref{eq:TheoremOne} and \eqref{eq:TheoremSeven} we have
\begin{align}
&\limsup_{n\to\infty}E''(\xx)(v_{u,n,i},v_{u,n,i})\nonumber\\
&\quad=\limsup_{n\to\infty}\frac1{\abs{\CC_{nN}}}\sum_{g\in\CC_{nN}}V''(y_0)\parens[\Big]{\parens[\big]{\rot(h)\proj{v}_{u,n,i}(gh)-\proj{v}_{u,n,i}(g)}_{h\in\Gstar}^2}\nonumber\\
&\quad=\limsup_{n\to\infty}\frac1{\abs{\CC_{nN}}}\sum_{g\in\CC_{nN}}V''(y_0)\parens[\Big]{(a_i(g,h)+b_i(g,h))_{h\in\Gstar}^2}\nonumber\\
&\quad=\limsup_{n\to\infty}(s_{1,n,i}+s_{2,n,i}),\label{eq:TheoremFour}
\end{align}
where $\proj{v}_{u,n,i}=\pproj{v_{u,n,i}}$,
\begin{align*}
&a_i(g,h):=\rot(g)^{\mathsf T}(\bar{S}_{u,i}(gh)-\bar{S}_{u,i}(g))((gh)\cdot x_0-x_0),\\
&b_i(g,h):=\rot(g)^{\mathsf T}\bar{S}_{u,i}(g)\rot(g)(h\cdot x_0-x_0),
\intertext{and $\bar{S}_{u,i}(g\G_{x_0}):=\bar{S}_{u,i}(g)$ for all $g\in\G$ and $h\in\Gstar$, and}
&s_{1,n,i}:=\frac1{\abs{\CC_{nN}}}\sum_{g\in\CC_{nN}}V''(y_0)\parens[\Big]{(a_i(g,h))_{h\in\Gstar}^2},\\
&s_{2,n,i}:=\frac1{\abs{\CC_{nN}}}\sum_{g\in\CC_{nN}}V''(y_0)\parens[\Big]{(b_i(g,h))_{h\in\Gstar},(2a_i(g,h)+b_i(g,h))_{h\in\Gstar}}
\end{align*}
for all $n\in\N$.
Let $\RRVcoset\subset\Gstar$ be a finite interaction range of $V$.
We have
\begin{align}
\limsup_{n\to\infty}s_{1,n,i}&\le\limsup_{n\to\infty}\frac C{\abs{\CC_{nN}}}\sum_{g\in\CC_{nN}}\sum_{h\in\RRVcoset}\norm{a_i(g,h)}^2\nonumber\\
&\le\limsup_{n\to\infty}\frac C{\abs{\CC_{nN}}}\sum_{g\in\CC_{nN}}\sum_{h\in\RRVcoset}\norm{\bar{S}_{u,i}(gh)-\bar{S}_{u,i}(g)}^2
\le C\newnorm u\RR^2,\label{eq:TheoremFive}
\end{align}
where we used \eqref{eq:TheoremSeven} in the second and Lemma~\ref{Lemma:Three} in the last step.
Observe that 
\begin{align}
&\sum_{g\in\CC_{nN}}V'(y_0)\parens[\big]{\rot(g)^{\mathsf T}\bar{S}_{u,i}^2(g)(g\cdot x_0-x_0)}_{h\in\Gstar}\nonumber\\
&\quad=\sum_{g\in\CC_{nN}}\sum_{h\in\RRVcoset}\frac1{\abs{\G_{x_0}}}\sum_{h'\in\G_{x_0}}\partial_hV(y_0)\parens[\big]{\rot(h')\rot(g)^{\mathsf T}\bar{S}_{u,i}^2(g)(g\cdot x_0-x_0)}+o(\abs{\CC_{nN}})\nonumber\\
&\quad=\sum_{g\in\CC_{nN}}\sum_{h\in\RRVcoset}\frac1{\abs{\G_{x_0}}}\sum_{h'\in h}\partial_hV(y_0)\parens[\big]{\rot(h')\rot(g)^{\mathsf T}\bar{S}_{u,i}^2(g)(g\cdot x_0-x_0)}+o(\abs{\CC_{nN}})\nonumber\\
&\quad=\sum_{g\in\CC_{nN}}V'(y_0)\parens[\big]{\rot(g)^{\mathsf T}\bar{S}_{u,i}(gh)((gh)\cdot x_0-x_0)}_{h\in\Gstar}+o(\abs{\CC_{nN}}),\label{eq:TheoremSix}
\end{align}
where in the first and third step we used \eqref{eq:TheoremOne} to substitute $g$ by $g{h'}^{-1}$ for $h'\in\G_{x_0}$, respectively, $g$ by $gh'$ for $h'\in h$, and in the second that $\ee=0$ due to Lemma~\ref{Lemma:energyderivativeY}. We also wrote $o(\abs{\CC_{nN}})$ for terms that vanish as $n\to\infty$ when divided by $\abs{\CC_{nN}}$.
The definition of $S_{u,i}$ implies
\begin{equation}\label{eq:TheoremEight}
S_{u,i}(g)S_{u,i}(h)=S_{u,i}(h)S_{u,i}(g)\qquad\text{for all }g,h\in\G.
\end{equation}
We have
\begin{align}
&\limsup_{n\to\infty}s_{2,n,i}
=\limsup_{n\to\infty}\frac1{\abs{\CC_{nN}}}\sum_{g\in\CC_{nN}}V'(y_0)\parens[\big]{-\rot(g)^{\mathsf T}\bar{S}_{u,i}(g)\rot(g)(2a_i(g,h)+b_i(g,h))}_{h\in\Gstar}\nonumber\\
&=\limsup_{n\to\infty}\frac1{\abs{\CC_{nN}}}\sum_{g\in\CC_{nN}}V'(y_0)\parens[\Big]{-2\rot(g)^{\mathsf T}\bar{S}_{u,i}(g)\bar{S}_{u,i}(gh)((gh)\cdot x_0-x_0)\nonumber\\
&\qquad+\rot(g)^{\mathsf T}\bar{S}_{u,i}^2(g)((gh)\cdot x_0-x_0)+\rot(g)^{\mathsf T}\bar{S}_{u,i}^2(g)(g\cdot x_0-x_0)}_{h\in\Gstar}\nonumber\\
&=\limsup_{n\to\infty}\frac1{\abs{\CC_{nN}}}\sum_{g\in\CC_{nN}}V'(y_0)\parens[\big]{\rot(g)^{\mathsf T}(\bar{S}_{u,i}(g)-\bar{S}_{u,i}(gh))^2((gh)\cdot x_0-x_0)}_{h\in\Gstar}\nonumber\\
&\le\limsup_{n\to\infty}\frac C{\abs{\CC_{nN}}}\sum_{g\in\CC_{nN}}\sum_{h\in\RRVcoset}\norm{\bar{S}_{u,i}(g)-\bar{S}_{u,i}(gh)}^2
\le C\newnorm u\RR^2,\label{eq:TheoremNine}
\end{align}
where in the first step we used Lemma~\ref{Lemma:PropertiesV}, in the second step we used \eqref{eq:TheoremSeven}, in the third step we used \eqref{eq:TheoremSix} and \eqref{eq:TheoremEight}, in the forth step we used \eqref{eq:TheoremSeven}, and in the last step we used Lemma~\ref{Lemma:Three}.

Since $i\in\{1,\dots,m\}$ was arbitrary, the equations \eqref{eq:TheoremTwo}, \eqref{eq:TheoremThree}, \eqref{eq:TheoremFour}, \eqref{eq:TheoremFive} and \eqref{eq:TheoremNine} imply the assertion \eqref{eq:TheoremZero}.
\end{proof}
\begin{Remark}
The above assumption $E'(\xx)=0$ can be replaced by the weaker assumption $E'(\xx)u=0$ for all $u\in\UPer$ with $u(\G)\subset\R^{d_1}\times\{0_{d_2}\}$.
\end{Remark}
\begin{Example}\label{Example:counterV}
We present an example with an infinite interaction range, $E'(\xx)=0$ and $\lambdaa=\lambdanewa=-\infty$.
In particular $E''(\xx)$ is not bounded with respect to $\newnorm\fdot\RR$.

Let $d=d_2=1$, $d_1=0$, $t=\iso{I_1}1\in\E(1)$, $\G=\set{t^n}{n\in\Z}<\E(1)$ and $x_0=0\in\R$.
We have $\G_{x_0}=\{\id\}$ and $\MM=\N$.
Let $\alpha>1$ and $V\colon(\R^d)^\GstarExample\to\R$ be the interaction potential such that $V$ has the properties \ref{item:Rotation}, \ref{item:Vfrechet} and \ref{item:infty} and 
\[V''(y_0)(z_1,z_2)=-\sum_{n\in\N}n^{-\alpha}z_1(t^n)z_2(t^n)\qquad\text{for all }z_1,z_2\in L^\infty(\GstarExample,\R^d),\]
see Remark~\ref{Remark:VFrechet}\ref{item:RemarkGTrivial}.
Since $\G<\{I_d\}\ltimes\R^d$ we have $\ee=0$ and thus $\xx$ is a critical point of $E$ by Lemma~\ref{Lemma:energyderivativeY}.
Let $N\in\N$ be even.
The set $\{t^0,\dots,t^{N-1}\}$ is a representation set of $\G/\T^N$.
We define the $\T^N$-periodic function $u\in\UPer$ by
\begin{align*}
u(t^n)=\begin{cases}\frac nN&\text{for }n\in\{0,\dots,N/2-1\},\\
1-\frac nN&\text{for }n\in\{N/2,\dots,N-1\}.\end{cases}
\end{align*}
Let $\RR=\{\id,t,t^2\}$ and $\RR'=\{t\}$.
Then $\RR$ is an admissible $\id$-neighborhood and $\RR'$ generates $\G$.
By Theorem~\ref{Theorem:StrongerEquivalence}\ref{Theorem:seminormequivalence} and Theorem~\ref{Theorem:StrongerEquivalence}\ref{Theorem:nablaequivalent}, the seminorms $\norm\fdot_\RR$ and $\norm{\nabla_{\RR'}\fdot}$ are equivalent and thus there exists a constant $C>0$ such that $\norm\fdot_\RR\le C\norm{\nabla_{\RR'}\fdot}$.
We have
\begin{equation}\label{eq:hvm}
\norm u_\RR\le C\norm{\nabla_{\RR'} u}_2=C\parens[\bigg]{\frac1N\sum_{n=0}^{N-1}\norm{\nabla_{\RR'} u(t^n)}^2}^{\frac12}=\frac CN.
\end{equation}
We have
\begin{align}\label{eq:hvn}
&E''(\xx)(u,u)=\frac1N\sum_{n=0}^{N-1}V''(y_0)\parens[\Big]{\parens[\big]{u(t^ns)-u(t^n)}_{s\in\GstarExample},\parens[\big]{u(t^ns)-u(t^n)}_{s\in\GstarExample}}\nonumber\\
&\qquad=-\frac1N\sum_{n=0}^{N-1}\sum_{m\in\N}m^{-\alpha}\abs[\big]{u(t^{n+m})-u(t^n)}^2
\le-\frac1N\sum_{n=0}^{N-1}(N/2)^{-\alpha}\abs[\big]{u(t^{n+N/2})-u(t^n)}^2\nonumber\\
&\qquad\le-\frac12 (N/2)^{-\alpha} \frac1{4^2}
=-2^{\alpha-5}N^{-\alpha}.
\end{align}
By \eqref{eq:hvm} and \eqref{eq:hvn} we have
\[\lambdaa\le \frac{E''(\xx)(u,u)}{\norm u_\RR^2}\le -cN^{2-\alpha},\]
where $c=C^{-2}2^{\alpha-5}$.
For all $\alpha\in(1,2)$ we have $\lambdaa=-\infty$ as $N\in2\N$ was arbitrary.
Since $\norm\fdot_\RR=\newnorm\fdot\RR$, for all $\alpha\in(1,2)$ we also have $\lambdanewa=-\infty$.
\end{Example}
\subsection{Stability criteria}
In this section we prove our main results that characterize the stability constants $\lambdaa$ and $\lambdanewa$ in the Fourier transform domain: Theorem~\ref{Theorem:LambdaIrreducible} and, in particular, Theorem~\ref{Theorem:LambdaInduced}.

We first establish \eqref{eq:E-and-norm-as-conv}. 
Recall the definition of $\ff$ and $\gggg,\newgggg$ from Definition~\ref{Definition:eeff} and~\ref{Definition:gggg}, respectively.

\begin{Lemma}\label{Lemma:onetwoL}
Let $u,v\in\UPer$ and set $u_0=u(\fdot^{-1})$ and $v_0=v(\fdot^{-1})$. Then 
\begin{enumerate}
\item\label{Lemma-item:oneL}
$E''(\xx)(u,v)=\angles{\ff*v_0,u_0}$, 
\item\label{Lemma-item:twoL}
$\norm u_{\RR}=\norm{\gggg*u_0}_2$ and $\newnorm u\RR=\norm{\newgggg*u_0}_2$.
\end{enumerate}
\end{Lemma}
\begin{proof}
\ref{Lemma-item:oneL} Let $u,v\in\UPer$.
Let $N\in \MM$ such that $u$ and $v$ are $\T^N$-periodic.
Let $u_0=u(\fdot^{-1})$ and $v_0=v(\fdot^{-1})$.
By Lemmas~\ref{Lemma:energyderivativeY} and~\ref{Lemma:Convolution} we have
\begin{align*}
E''(\xx)(u,v)&=\sum_{g,h\in\CC_N}u(g)^{\mathsf T}\partial_{g\T^N}\partial_{h\T^N}E(\xx)v(h)\\
&=\frac1{\abs{\CC_N}}\sum_{g,h\in\CC_N}\sum_{t\in\T^N}u_0(g^{-1})^{\mathsf T}\ff(g^{-1}ht)v_0(h^{-1})\\
&=\frac1{\abs{\CC_N}}\sum_{g\in\CC_N}u_0(g^{-1})^{\mathsf T}\ff*v_0(g^{-1})
=\angles{\ff*v_0,u_0},
\end{align*}
where in the third step we used that $v_0((ht)^{-1})=v_0(h^{-1})$ for all $h\in\CC_N$ and $t\in\T^N$.

\ref{Lemma-item:twoL}
Let $u\in\UPer$ and $N\in \MM$ such that $u$ is $\T^N$-periodic and set $u_0=u(\fdot^{-1})$.
With $P\in\R^{(d\abs\RR)\times(d\abs\RR)}$ as in Definition~\ref{Definition:gggg} and $\delta_g\colon\G\to\{0,1\}$, $h\mapsto\delta_{h,g}$ we have 
\begin{align}\label{eq:Azu2}
P(u(gh))_{h\in \RR}
&=P(u_0(h^{-1}g^{-1}))_{h\in\RR}\nonumber
=P((\delta_hI_d)*u_0(g^{-1}))_{h\in\RR}\nonumber\\
&=(P(\delta_hI_d)_{h\in\RR})*u_0(g^{-1})
=\gggg*u_0(g^{-1})
\end{align}
for any $g\in\G$. So by \eqref{eq:Azu1} and \eqref{eq:Azu2} we obtain
\[\norm u_\RR^2=\frac1{\abs{\CC_N}}\sum_{g\in\CC_N}\norm{\gggg*u_0(g^{-1})}^2=\norm{\gggg*u_0}_2^2.\]
Analogously we have $\newnorm u\RR=\norm{\newgggg*u_0}_2$.
\end{proof}
The following lemma shows that we can consider complex-valued instead of real-valued functions. Its standard proof is included for completeness. 
\begin{Lemma}\label{Lemma:complexvalued}
We have (with matching choices of indices $0$)
\begin{align*}
&\lambda_{\textnormal{a}(,0,0)}=\sup\set[\big]{c\in\R}{\forall\+ u\in\UPerC:c\norm{g_{\RR(,0,0)}*u}_2^2\le\angles{\ff*u,u}}.
\end{align*}
\end{Lemma}
\begin{proof}
By Lemma~\ref{Lemma:onetwoL} and since $\UPer=\set{u(\fdot^{-1})}{u\in\UPer}$, we have
\[\lambdaa=\sup\set[\big]{c\in\R}{\forall\+ u\in\UPer:c\norm{\gggg*u}_2^2\le\angles{\ff*u,u}}\]
and hence,
\[\lambdaa\ge\sup\set[\big]{c\in\R}{\forall\+ u\in\UPerC:c\norm{\gggg*u}_2^2\le\angles{\ff*u,u}}=:\text{RHS}.\]
Now we show that $\lambdaa\le\text{RHS}$.
For all $u\in\UPerC$, observing that $\angles{\ff*\Re(u),\Im(u)}=\angles{\ff*\Im(u),\Re(u)}$ due to Lemma~\ref{Lemma:onetwoL}\ref{Lemma-item:oneL}, we have
\begin{align*}
\angles{\ff*u,u}
&=\angles{\ff*\Re(u),\Re(u)}+\angles{\ff*\Im(u),\Im(u)}\\
&\ge\lambdaa\norm{\gggg*\Re(u)}_2^2+\lambdaa\norm{\gggg*\Im(u)}_2^2
=\lambdaa\norm{\gggg*u}_2^2.
\end{align*}

The proof of the characterization of $\lambdanewa$ is analogous.
\end{proof}
Recall that by our definition all representations are unitary.
\begin{Lemma}\label{Lemma:rft}%\label{Lemma:ffffW}
For all $g\in\G$ we have $\ff(g^{-1})=\ff(g)^{\mathsf T}$ and for all representations $\rho$ of $\G$ the matrix $\fourier\ff(\rho)$ is Hermitian.
\end{Lemma}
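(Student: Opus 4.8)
The plan is to establish the pointwise identity $\ff(g^{-1})=\ff(g)^{\mathsf T}$ first, and then deduce that $\fourier\ff(\rho)$ is Hermitian as a formal consequence, using the unitarity of $\rho$ together with the fact that $\ff$ takes real matrix values. For the first claim the key input is the symmetry of the second Fr\'echet derivative: since $V''(y_0)$ is a symmetric bilinear form, the partial Hessian matrices satisfy $\partial_{h_2}\partial_{h_1}V(y_0)=\parens{\partial_{h_1}\partial_{h_2}V(y_0)}^{\mathsf T}$ for all $h_1,h_2\in\Gstar$. First I would transpose the defining expression for $\ff(g)$ term by term; transposing each of the four summands exchanges the outer factors $\rot(h_1)^{\mathsf T}$ and $\rot(h_2)^{\mathsf T}$ and replaces each Hessian block by its transpose. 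Applying the Hessian symmetry and then relabelling the dummy indices $h_1\leftrightarrow h_2$ turns $\ff(g)^{\mathsf T}$ into a sum over the same four types of terms, now carrying the Kronecker deltas $\delta_{g,h_1^{-1}h_2}$, $\delta_{g,h_2}$, $\delta_{g,h_1^{-1}}$, $\delta_{g,\id}$.

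Next I would compute $\ff(g^{-1})$ directly from its definition and simplify its four Kronecker deltas using the elementary rule $\delta_{g^{-1},a}=\delta_{g,a^{-1}}$, which produces exactly $\delta_{g,h_1^{-1}h_2}$, $\delta_{g,h_2}$, $\delta_{g,h_1^{-1}}$, $\delta_{g,\id}$. Matching the four summands of the two expressions (the two cross terms get interchanged, while the fully-contracted and the doubly-transformed terms coincide) then yields $\ff(g^{-1})=\ff(g)^{\mathsf T}$.

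For the Hermitian property I would start from the $L^1$ Fourier transform of Definition~\ref{Definition:FourierLOne}, $\fourier\ff(\rho)=\sum_{g\in\G}\ff(g)\otimes\rho(g)$. Taking the conjugate transpose and using $(A\otimes B)^{\mathsf H}=A^{\mathsf H}\otimes B^{\mathsf H}$, the reality of $\ff(g)$ (so $\ff(g)^{\mathsf H}=\ff(g)^{\mathsf T}$) and the unitarity of $\rho$ (so $\rho(g)^{\mathsf H}=\rho(g^{-1})$) give $\fourier\ff(\rho)^{\mathsf H}=\sum_{g\in\G}\ff(g)^{\mathsf T}\otimes\rho(g^{-1})$. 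Inserting the first part and reindexing $g\mapsto g^{-1}$, which permutes $\G$, recovers $\sum_{g\in\G}\ff(g)\otimes\rho(g)=\fourier\ff(\rho)$.

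The main obstacle is the careful bookkeeping in the first part, namely tracking how transposition combined with the index swap $h_1\leftrightarrow h_2$ permutes the four summands and their deltas; once that combinatorial matching is verified, the Hermitian claim is purely formal.
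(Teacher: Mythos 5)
Your proposal is correct and follows essentially the same route as the paper: the identity $\ff(g^{-1})=\ff(g)^{\mathsf T}$ is obtained from the symmetry of the partial Hessians $\partial_{h_2}\partial_{h_1}V(y_0)=\parens{\partial_{h_1}\partial_{h_2}V(y_0)}^{\mathsf T}$ together with the delta relabelling $\delta_{g^{-1},a}=\delta_{g,a^{-1}}$ and the swap $h_1\leftrightarrow h_2$, and the Hermitian property then follows by taking the conjugate transpose of $\sum_{g\in\G}\ff(g)\otimes\rho(g)$, using the reality of $\ff$, the unitarity of $\rho$ and the reindexing $g\mapsto g^{-1}$. The bookkeeping you describe (the two cross terms interchanging, the other two matching directly) is exactly what happens in the paper's computation.
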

\begin{proof}
The first claim follows by noting that $\delta_{g^{-1},h_2'^{-1}h_1'}=\delta_{g,h_1'^{-1}h_2'}$ in the defining formula for $\ff(g^{-1})$ from Definition~\ref{Definition:eeff}. For the second we note that for all representations $\rho$ of $\G$ we have
\begin{align*}
\fourier\ff(\rho)
&=\sum_{g\in\G}\ff(g)\otimes\rho(g)
=\sum_{g\in\G}\ff(g^{-1})\otimes\rho(g^{-1})\\
&=\sum_{g\in\G}\ff(g)^{\mathsf H}\otimes\rho(g)^{\mathsf H}
=\parens[\bigg]{\sum_{g\in\G}\ff(g)\otimes\rho(g)}^{\mathsf H}
=\fourier\ff(\rho)^{\mathsf H},
\end{align*}
where in the third step we used the first assertion and that $\rho$ is unitary.
\end{proof}
We now prove Theorem~\ref{Theorem:LambdaIrreducible}. 
\begin{proof}[Proof of Theorem~\ref{Theorem:LambdaIrreducible}]
By Lemma~\ref{Lemma:rft} for all $\rho\in\EE$ the matrix $\fourier\ff(\rho)$ is Hermitian and thus the term $\lambdamin(\fourier\ff(\rho),\fourier\gggg(\rho))$ is well-defined.
We have to show that
\[\lambdaa=\inf\set[\Big]{\lambdamin\parens[\Big]{\fourier\ff(\rho),\fourier\gggg(\rho)}}{\rho\in\EE}=:\text{RHS}.\]
By Lemma~\ref{Lemma:complexvalued} we have
\[\lambdaa=\sup\set[\big]{c\in\R}{\forall\+ u\in\UPerC:c\norm{\gggg*u}_2^2\le\angles{\ff*u,u}}.\]
First we show that $\lambdaa\le\text{RHS}$.
Let $\rho\in\EE$ and $a\in\C^{dd_\rho}$.
We define $u\in\UPerC$ by
\[\fourier u(\rho')=\begin{cases}
(\begin{matrix}a&0_{dd_\rho,d_\rho-1}\end{matrix}) & \text{if }\rho'=\rho\\
      0_{dd_{\rho'},d_{\rho'}} & \text{else}
    \end{cases}\]
for all $\rho'\in\EE$.
By Lemma~\ref{Lemma:Convolution} and Proposition~\ref{Proposition:TFplancherelmatrix} we have
\begin{multline*}
\scalar[\Big]{\fourier\ff(\rho)a}a=\scalar[\Big]{\fourier\ff(\rho)\fourier u(\rho)}{\fourier u(\rho)}=\scalar[\Big]{\fourier{\ff*u}(\rho)}{\fourier u(\rho)}=\frac1{d_\rho}\scalar{\ff*u}u\\
\ge\frac\lambdaa{d_\rho}\norm{\gggg*u}_2^2=\lambdaa\norm{\fourier{\gggg*u}(\rho)}^2=\lambdaa\norm{\fourier\gggg(\rho)\fourier u(\rho)}^2=\lambdaa\norm{\fourier\gggg(\rho)a}^2.
\end{multline*}
Since $a\in\C^{dd_\rho}$ was arbitrary, we have $\lambdamin(\fourier\ff(\rho),\fourier\gggg(\rho))\ge\lambdaa$.

Now we prove that $\lambdaa\ge\text{RHS}$.
Let $u\in\UPerC$.
For a matrix $A$ we denote its $i$th column by $A_i$.
We have
\begin{align*}
\scalar{\ff*u}u
&=\sum_{\rho\in\EE}d_\rho\scalar[\Big]{\fourier{\ff*u}(\rho)}{\fourier u(\rho)}\\
&=\sum_{\rho\in\EE}d_\rho\scalar[\Big]{\fourier\ff(\rho)\fourier u(\rho)}{\fourier u(\rho)}
=\sum_{\rho\in\EE}d_\rho\sum_{i=1}^{d_\rho}\scalar[\Big]{\fourier\ff(\rho)\fourier u(\rho)_i}{\fourier u(\rho)_i}\\
&\ge\text{RHS}\sum_{\rho\in\EE}d_\rho\sum_{i=1}^{d_\rho}\norm{\fourier\gggg(\rho)\fourier u(\rho)_i}^2
=\text{RHS}\sum_{\rho\in\EE}d_\rho\norm{\fourier\gggg(\rho)\fourier u(\rho)}^2
=\text{RHS}\norm{\gggg*u}_2^2.
\end{align*}

The proof of the characterization of $\lambdanewa$ is analogous.
\end{proof}
We now turn to the proof of Theorem~\ref{Theorem:LambdaInduced}. 
For the remainder of this section, we fix a complete set of representatives of the cosets of $\T\F$ in $\G$.
In the following we write $\Ind\rho$ for $\Ind_{\T\F}^\G\rho$ for all representations $\rho$ of $\T\F$.
Let $n_0=|\G:\T\F|$.
\begin{Lemma}\label{Lemma:fgcontinuous}
For all representations $\rho$ of $\T\F$ the three functions
\begin{alignat*}{2}
&\R^{d_2}\to\C^{(dn_0d_\rho)\times(dn_0d_\rho)},\quad&&k\mapsto\fourier\ff(\Ind(\chi_k\rho))\quad\text{and}\\
&\R^{d_2}\to\C^{(\abs\RR n_0d_\rho)\times(dn_0d_\rho)},\quad&&k\mapsto\fourier{g_{\RR(,0,0)}}(\Ind(\chi_k\rho))
\end{alignat*}
are continuous and the two functions
\begin{align*}
&\R^{d_2}\to\R\cup\{\pm\infty\},\quad k\mapsto\lambdamin\parens[\Big]{\fourier\ff(\Ind(\chi_k\rho)),\fourier{g_{\RR(,0,0)}}(\Ind(\chi_k\rho))}
\end{align*}
are upper semicontinuous.
\end{Lemma}
\begin{proof}
Let $\rho$ be a representation of $\T\F$. 
The first three functions are continuous since $\ff\in L^1(\G,\R^{d\times d})$ and $g_{\RR(,0,0)}\in L^1(\G,\R^{(d\abs\RR)\times d})$.

Let $f_1(k):=\fourier\ff(\Ind(\chi_k\rho))$, $f_2(k):=\fourier{g_{\RR(,0,0)}}(\Ind(\chi_k\rho))$, $f(k):=\lambdamin\parens[\normalsize]{f_1(k),f_2(k)}$, $(k_n)_{n\in\N}$ a sequence in $\R^{d_2}$ and $k\in\R^{d_2}$ such that $\lim_{n\to\infty}k_n=k$.
We assume that $\limsup_{n\to\infty}f(k_n)>-\infty$ and $\limsup_{n\to\infty}f(k_n)=\lim_{n\to\infty}f(k_n)$ without loss of generality.
Let $\lambda\in\R$ such that $\lambda<\limsup_{n\to\infty}f(k_n)$.
We have $\lambda f_2(k_n)^{\mathsf H}f_2(k_n)\le f_1(k_n)$ for all $n\in\N$ large enough.
Since the Loewner order is closed, \ie the set $\set{(A,B)\in X^2}{A\le B}$ is closed, where $X=\set{A\in\C^{(dn_0d_\rho)\times(dn_0d_\rho)}}{A\text{ is Hermitian}}$, we have $\lambda f_2(k)^{\mathsf H}f_2(k)\le f_1(k)$.
Thus we have $\lambda\le f(k)$.
\end{proof}
\begin{proof}[Proof of Theorem~\ref{Theorem:LambdaInduced}]
We may assume that $K_\rho$ is a representation set of $\R^{d_2}/\G_\rho$. By Lemma~\ref{Lemma:fgcontinuous} the assertion then directly extends to the case that $\overline{K_\rho}$ contains such a set. 
Recall that $\MM=m_0\N$. 
By Lemma~\ref{Lemma:RepSys}\ref{item:aaa} there exists a representation set $R'$ of a representation set of $\dual{\T\F}/{\simrg}$ such that $\rho$ is $\T^{m_0}$-periodic for all $\rho\in R'$. 
Due to the existence of fundamental domains, see, \eg, \cite[Theorem 6.6.13]{Ratcliffe2006}, for all $\rho\in R'$ there exists a representation set $K_\rho'$ of $\R^{d_2}/\G_\rho$ such that $L_\rho'$ is a dense subset of $K_\rho'$, where $L_\rho'=\set{k\in K_\rho'}{\exists\+N\in \MM:k\in\dualL/N}$.
By Theorem~\ref{Theorem:MainRepDual}\ref{Theorem-item:RepSet} applied to $R$ and $R'$, there exist a bijection
\begin{equation}\label{eq:toxw}
\phi\colon\bigsqcup_{\rho\in R'}K_\rho'\to\bigsqcup_{\rho\in R}K_\rho,\quad(k,\rho)\mapsto(\phi_1(k,\rho),\phi_2(k,\rho))
\end{equation}
and for all $\rho\in R'$ and $k\in K_\rho'$ some $T_{k,\rho}\in\U(d_{\Ind(\chi_k\rho)})$ such that
\begin{equation}\label{eq:toxy}
\Ind(\chi_{\phi_1(k,\rho)}\phi_2(k,\rho))=T_{k,\rho}^{\mathsf H}\Ind(\chi_k\rho)T_{k,\rho}.
\end{equation}
By \eqref{eq:toxw} and \eqref{eq:toxy} we have
\begin{align}\label{eq:starac}
&\text{RHS}:=\inf\set[\Big]{\lambdamin\parens[\Big]{\fourier\ff(\Ind(\chi_k\rho)),\fourier\gggg(\Ind(\chi_k\rho))}}{\rho\in R,k\in K_\rho}\nonumber\\
&=\inf\set[\Big]{\lambdamin\parens[\Big]{\fourier\ff\parens[\big]{\Ind\parens[\big]{\chi_{\phi_1(k,\rho)}\phi_2(k,\rho)}},\fourier\gggg\parens[\big]{\Ind\parens[\big]{\chi_{\phi_1(k,\rho)}\phi_2(k,\rho)}}}}{\rho\in R',k\in K_\rho'}\nonumber\\
\begin{split}
&=\inf\braces[\Big]{\lambdamin\parens[\Big]{\parens[\big]{I_d\otimes T_{k,\rho}^{\mathsf H}}\fourier\ff(\Ind(\chi_k\rho))\parens[\big]{I_d\otimes T_{k,\rho}},\parens[\big]{I_d\otimes T_{k,\rho}^{\mathsf H}}\fourier\gggg(\Ind(\chi_k\rho))\parens[\big]{I_d\otimes T_{k,\rho}}}\,\Big|\,\\
&\qquad\rho\in R',k\in K_\rho'}
\end{split}\nonumber\\
&=\inf\set[\Big]{\lambdamin\parens[\Big]{\fourier\ff(\Ind(\chi_k\rho)),\fourier\gggg(\Ind(\chi_k\rho))}}{\rho\in R',k\in K_\rho'}.
\end{align}
For all $\rho\in R'$ we define the function
\begin{align*}
f_\rho\colon K_\rho'\to\R\cup\{\pm\infty\},\qquad
k\mapsto\lambdamin\parens[\Big]{\fourier\ff(\Ind(\chi_k\rho)),\fourier\gggg(\Ind(\chi_k\rho))}.
\end{align*}
By Lemma~\ref{Lemma:fgcontinuous} for all $\rho\in R'$ the function $f_\rho$ is upper semicontinuous and thus we have
\begin{equation}\label{eq:rhc}
\inf f_\rho=\inf f_\rho|_{L_\rho'}.
\end{equation}
By \eqref{eq:starac} and \eqref{eq:rhc} we have
\[\text{RHS}=\inf\set[\big]{f_\rho(k)}{\rho\in R',k\in L_\rho'}.\]
By Theorem~\ref{Theorem:LambdaIrreducible} we have
\begin{equation}\label{eq:dak}
\lambdaa=\inf\set[\Big]{\lambdamin\parens[\Big]{\fourier\ff(\rho),\fourier\gggg(\rho)}}{\rho\in\EE}.
\end{equation}
By Lemma~\ref{Lemma:KronPlus}\ref{item:KronPlus} there exists a permutation matrix $P_{n,p_1,\dots,p_k}\in\O(n(p_1+\dots+p_k))$ for all $n,p_1,\dots,p_k\in\N$ such that
\[A\otimes(B_1\oplus\dots\oplus B_k)=P_{m,p_1,\dots,p_k}^{\mathsf T}((A\otimes B_1)\oplus\dots\oplus(A\otimes B_k))P_{n,p_1,\dots,p_k}\]
for all $A\in\C^{m\times n}$ and $B_i\in\C^{p_i\times p_i}$, $i\in\{1,\dots,k\}$.

Now we show that $\lambdaa\le\text{RHS}$.
Let $\rho\in R'$, $k\in L_\rho'$ and $\rho'=\Ind(\chi_k\rho)$.
Let $N\in \MM$ such that by Remark~\ref{Remark:VNK} and the construction of $L_\rho'$ we have $\chi_k|_{\T^N}=1$.
The map $\rho'$ is $\T^N$-periodic.
There exist some $\rho_1,\dots,\rho_n\in\EE$ and $T\in\U(d_{\rho'})$ such that
\[\rho'(g)=T^{\mathsf H}(\rho_1(g)\oplus\dots\oplus\rho_n(g))T\qquad\text{for all }g\in\G.\]
We have
\begin{align}\label{eq:1ac}
\fourier\ff(\rho')&=\sum_{g\in\G}\ff(g)\otimes\rho'(g)\nonumber\\
&=\sum_{g\in\G}\ff(g)\otimes\parens[\big]{T^{\mathsf H}(\rho_1(g)\oplus\dots\oplus\rho_n(g))T}\nonumber\\
&=(I_d\otimes T)^{\mathsf H}\parens[\bigg]{\sum_{g\in\G}\ff(g)\otimes\parens[\big]{\rho_1(g)\oplus\dots\oplus\rho_n(g)}}(I_d\otimes T)\nonumber\\
&=P^{\mathsf H}\parens[\bigg]{\parens[\bigg]{\sum_{g\in\G}\ff(g)\otimes\rho_1(g)}\oplus\dots\oplus\parens[\bigg]{\sum_{g\in\G}\ff(g)\otimes\rho_1(g)}}P\nonumber\\
&=P^{\mathsf H}\parens[\Big]{\fourier\ff(\rho_1)\oplus\dots\oplus\fourier\ff(\rho_n)}P,
\end{align}
where $P$ is the unitary matrix $P_{d,d_{\rho_1},\dots,d_{\rho_n}}(I_d\otimes T)$.
Analogously to \eqref{eq:1ac} we have
\begin{equation}\label{eq:2ac}
\fourier\gggg(\rho')=Q^{\mathsf H}\parens[\Big]{\fourier\gggg(\rho_1)\oplus\dots\oplus\fourier\gggg(\rho_n)}P,
\end{equation}
where $Q$ is the unitary matrix $P_{d\abs\RR,d_{\rho_1},\dots,d_{\rho_n}}(I_{d\abs\RR}\otimes T)$.
By \eqref{eq:1ac}, \eqref{eq:2ac} and \eqref{eq:dak} we have
\begin{align*}
f_\rho(k)&=\lambdamin\parens[\Big]{\fourier\ff(\rho_1)\oplus\dots\oplus\fourier\ff(\rho_n),\fourier\gggg(\rho_1)\oplus\dots\oplus\fourier\gggg(\rho_n)}\\
&=\min\set[\Big]{\lambdamin\parens[\Big]{\fourier\ff(\rho_i),\fourier\gggg(\rho_i)}}{i\in\{1,\dots,n\}}
\ge\lambdaa.
\end{align*}

Now we show that $\lambdaa\ge\text{RHS}$.
Let $\rho_1\in\EE$.
By Theorem~\ref{Theorem:MainRepDual}\ref{Theorem-item:RepSys} the set $\set{\Ind(\chi_k\rho)}{\rho\in R',k\in L_\rho'}$ is a representation set of $\Ind\parens{\set{\rho\in\dual{\T\F}}{\rho\text{ is periodic}}}$.
By Theorem~\ref{Theorem:MainRepDual}\ref{Corollary-item:Subreps-induced} there exist some $\rho\in R'$ and $k\in L_\rho'$ such that $\rho_1$ is isomorphic to a subrepresentation of $\Ind(\chi_k\rho)$.
Let $\rho'=\Ind(\chi_k\rho)$.
There exist some $\rho_2,\dots,\rho_n\in\EE$ and $T\in\U(d_{\rho'})$ such that
\[\rho'(g)=T^{\mathsf H}(\rho_1(g)\oplus\dots\oplus\rho_n(g))T\qquad\text{for all }g\in\G.\]
Analogously to \eqref{eq:1ac} and \eqref{eq:2ac} we have
\begin{align*}
&\fourier\ff(\rho')=P^{\mathsf H}\parens[\Big]{\fourier\ff(\rho_1)\oplus\dots\oplus\fourier\ff(\rho_n)}P
\shortintertext{and}
&\fourier\gggg(\rho')=Q^{\mathsf H}\parens[\Big]{\fourier\gggg(\rho_1)\oplus\dots\oplus\fourier\gggg(\rho_n)}P,
\end{align*}
where $P$ and $Q$ are the unitary matrices $P_{d,d_{\rho_1},\dots,d_{\rho_n}}(I_d\otimes T)$ and $P_{d\abs\RR,d_{\rho_1},\dots,d_{\rho_n}}(I_{d\abs\RR}\otimes T)$, respectively.
We have
\[\lambdamin\parens[\Big]{\fourier\ff(\rho_1),\fourier\gggg(\rho_1)}\ge\min\set[\Big]{\lambdamin\parens[\Big]{\fourier\ff(\rho_i),\fourier\gggg(\rho_i)}}{i\in\{1,\dots,n\}}=f_\rho(k)\ge\text{RHS}.\]

The proof of the characterization of $\lambdanewa$ is analogous.
\end{proof}
%
%%%%%%%%%%%%%%%%%%%%%%%%%%%%%%
%%%%%%%%%%%%%%%%%%%%%%%%%%%%%%
\appendix
\section{Appendix}
\subsection*{Kronecker product}
For $A=(a_{ij})\in\C^{m\times n}$ and $B=(b_{ij})\in\C^{p\times q}$ the \emph{Kronecker product} $A\otimes B\in\C^{(mp)\times(nq)}$ of $A$ and $B$ is the partitioned matrix
\begin{align}\label{eq:Kronecker-product}
A\otimes B:=\begin{bmatrix}
a_{11}B & \cdots & a_{1n}B\\
\vdots & \ddots & \vdots\\
a_{m1}B & \cdots & a_{mn}B
\end{bmatrix}.
\end{align}
Identifying $\C^n$ with $\C^{n\times 1}$, the Kronecker product is also defined if $A$ or $B$ is a vector. 
For the basic properties of the Kronecker product we refer to \cite{Bernstein2009}.
\begin{Lemma}\label{Lemma:KronPlus}
For all $m,n\in\N$ let $P_{m,n}\in\O(mn)$ be the Kronecker permutation matrix such that
\[P_{p,m}(A\otimes B)P_{n,q}=B\otimes A\qquad\text{for all }A\in\C^{m\times n}\text{ and }B\in\C^{p\times q},\]
see \cite[Fact 7.4.30]{Bernstein2009}.
For all $m,n_1,\dots,n_k\in\N$ let $Q_{m,n_1,\dots,n_k}\in\O(m(n_1+\dots+n_k))$ be the permutation matrix $(P_{m,n_1}\oplus\dots\oplus P_{m,n_k})P_{n_1+\dots+ n_k,m}$.
Then the following statements hold:
\begin{enumerate}
\item For all $A_i\in\C^{m_i\times n_i}$, $i\in\{1,\dots,k\}$, and $B\in\C^{p\times q}$ we have
\[(A_1\oplus\dots\oplus A_k)\otimes B=(A_1\otimes B)\oplus\dots\oplus(A_k\otimes B).\]
\item\label{item:KronPlus} For all $A\in\C^{m\times n}$ and $B_i\in\C^{p_i\times q_i}$, $i\in\{1,\dots k\}$, we have
\[A\otimes(B_1\oplus\dots\oplus B_k)=Q_{m,p_1,\dots,p_k}^{\mathsf T}((A\otimes B_1)\oplus\dots\oplus(A\otimes B_k))Q_{n,q_1,\dots,q_k}.\]
\item For all $A\in\C^{m\times n}$ and $B_1,\dots,B_k\in\C^{p\times q}$ we have
\[A\otimes(B_1\oplus\dots\oplus B_k)=(P_{m,k}\otimes I_p)((A\otimes B_1)\oplus\dots\oplus(A\otimes B_k))(P_{k,n}\otimes I_q).\]
\end{enumerate}
\end{Lemma}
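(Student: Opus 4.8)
The plan is to handle the three identities in turn, relying throughout on two elementary facts: that the commutation matrices satisfy $P_{m,n}^{\mathsf T}=P_{m,n}^{-1}=P_{n,m}$, and that the Kronecker product obeys the mixed-product rule $(XY)\otimes(UV)=(X\otimes U)(Y\otimes V)$ whenever the factorizations are conformable.

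The first identity I would read off directly from the definition \eqref{eq:Kronecker-product}. Since forming $M\otimes B$ amounts to replacing each scalar entry $m_{ij}$ of the left factor by the block $m_{ij}B$, any block structure of $M$ is inherited by $M\otimes B$: the off-diagonal zero blocks of $A_1\oplus\dots\oplus A_k$ stay zero and the $i$-th diagonal block $A_i$ becomes $A_i\otimes B$. Hence no permutation correction is needed and the claim follows.

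The second identity is the crux, and the bookkeeping of commutation factors is where I expect the only real difficulty. Abbreviating $C=B_1\oplus\dots\oplus B_k$, $P=p_1+\dots+p_k$ and $Q=q_1+\dots+q_k$, I would first use the defining relation of $P_{\cdot,\cdot}$ to move $A$ to the right, obtaining $A\otimes C=P_{P,m}^{\mathsf T}(C\otimes A)P_{n,Q}^{\mathsf T}$; the gain is that the direct sum now sits in the left factor, so the first identity yields $C\otimes A=(B_1\otimes A)\oplus\dots\oplus(B_k\otimes A)$. Undoing the swap blockwise via $B_i\otimes A=P_{p_i,m}(A\otimes B_i)P_{n,q_i}$ and pulling the block-diagonal permutation factors out of the direct sum gives
\[ A\otimes C=P_{P,m}^{\mathsf T}\Big(\bigoplus_{i}P_{p_i,m}\Big)\Big(\bigoplus_{i}(A\otimes B_i)\Big)\Big(\bigoplus_{i}P_{n,q_i}\Big)P_{n,Q}^{\mathsf T}. \]
It then remains to recognise the outer factors. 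Using $P_{m,p_i}^{\mathsf T}=P_{p_i,m}$ and $P_{n,Q}^{\mathsf T}=P_{Q,n}$, one verifies straight from the definition of $Q$ that $Q_{m,p_1,\dots,p_k}^{\mathsf T}=P_{P,m}^{\mathsf T}\big(\bigoplus_{i}P_{p_i,m}\big)$ and $Q_{n,q_1,\dots,q_k}=\big(\bigoplus_{i}P_{n,q_i}\big)P_{n,Q}^{\mathsf T}$, which turns the display into the asserted conjugation.

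For the third identity, rather than simplify the $Q$-matrices of part (ii), I expect a short direct argument to be cleaner, exploiting that here all blocks share the common size $p\times q$. Writing $B_1\oplus\dots\oplus B_k=\sum_{i=1}^{k}E_{ii}\otimes B_i$ with $E_{ii}\in\C^{k\times k}$ the standard matrix units, associativity gives $A\otimes(B_1\oplus\dots\oplus B_k)=\sum_{i}(A\otimes E_{ii})\otimes B_i$. Substituting $A\otimes E_{ii}=P_{m,k}(E_{ii}\otimes A)P_{k,n}$ and extracting the identity factors $I_p,I_q$ by the mixed-product rule rewrites each summand as $(P_{m,k}\otimes I_p)\big((E_{ii}\otimes A)\otimes B_i\big)(P_{k,n}\otimes I_q)$; summing and using $\sum_i E_{ii}\otimes(A\otimes B_i)=(A\otimes B_1)\oplus\dots\oplus(A\otimes B_k)$ produces exactly the conjugation by $P_{m,k}\otimes I_p$ and $P_{k,n}\otimes I_q$. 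The main obstacle in the whole lemma is thus confined to the index discipline of the second part—keeping the row-side and column-side permutations distinct and matching them against the definition of $Q$—since once that is done, parts (i) and (iii) are forced by the definition and the mixed-product rule.
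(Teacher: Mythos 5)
Your proof is correct. For parts (i) and (ii) it coincides with the paper's argument: part (i) is read off from the block structure of the definition, and part (ii) follows exactly the paper's chain --- swap $A$ past the direct sum using the defining relation of the commutation matrices, apply (i) to $(B_1\oplus\dots\oplus B_k)\otimes A$, undo the swap blockwise via $B_i\otimes A=P_{p_i,m}(A\otimes B_i)P_{n,q_i}$, and identify the accumulated outer factors with $Q_{m,p_1,\dots,p_k}^{\mathsf T}$ and $Q_{n,q_1,\dots,q_k}$ using $P_{a,b}^{\mathsf T}=P_{b,a}$; your bookkeeping here checks out. The only genuine divergence is in part (iii): the paper obtains it as a corollary of (ii) by citing Fact~7.4.30\,viii) of \cite{Bernstein2009} to simplify $Q_{n,q,\dots,q}=(I_k\otimes P_{n,q})P_{kq,n}$ to $P_{k,n}\otimes I_q$, whereas you give a direct, self-contained computation writing $B_1\oplus\dots\oplus B_k=\sum_i E_{ii}\otimes B_i$ and using associativity together with the mixed-product rule. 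Your route avoids the external citation and makes the role of the common block size transparent; the paper's route is shorter but leans on the reference. Both are valid, and your verification that $\sum_i E_{ii}\otimes(A\otimes B_i)$ reassembles into $(A\otimes B_1)\oplus\dots\oplus(A\otimes B_k)$ is exactly the point where the equal block sizes are needed.
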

\begin{proof}
\begin{enumerate}
\item This is easy to check.
\item For all $A\in\C^{m\times n}$ and $B_i\in\C^{p_i\times q_i}$, $i\in\{1,\dots,k\}$, we have
\begin{align*}
&A\otimes(B_1\oplus\dots\oplus B_k)=P_{m,p_1+\dots+p_k}((B_1\oplus\dots\oplus B_k)\otimes A)P_{q_1+\dots+q_k,n}\\
&=P_{m,p_1+\dots+p_k}((B_1\otimes A)\oplus\dots\oplus(B_k\otimes A))P_{q_1+\dots+q_k,n}\\
&=P_{m,p_1+\dots+p_k}((P_{p_1,m}(A\otimes B_1)P_{n,q_1})\oplus\dots\oplus(P_{p_k,m}(A\otimes B_k)P_{n,q_k}))P_{q_1+\dots+q_k,n}\\
&=Q_{m,p_1,\dots,p_k}^{\mathsf T}((A\otimes B_1)\oplus\dots\oplus(A\otimes B_k))Q_{n,q_1,\dots,q_k}.
\end{align*}
\item By Fact 7.4.30$\+viii)$ in \cite{Bernstein2009} we have
\[Q_{n,q,\dots,q}=(I_k\otimes P_{n,q})P_{kq,n}=P_{k,n}\otimes I_q.\qedhere\]
\end{enumerate}
\end{proof}
\bibliography{Lit-StabObjStrc}

\begin{thebibliography}{10}

\bibitem{Arroyo2005}
M.~Arroyo and T.~Belytschko.
\newblock Continuum mechanics modeling and simulation of carbon nanotubes.
\newblock {\em Meccanica}, 40(4-6):455--469, 2005.

\bibitem{Bai2000}
Z.~Bai, J.~Demmel, J.~Dongarra, A.~Ruhe, and H.~van~der Vorst, editors.
\newblock {\em Templates for the solution of algebraic eigenvalue problems},
  volume~11 of {\em Software, Environments, and Tools}.
\newblock Society for Industrial and Applied Mathematics (SIAM), Philadelphia,
  PA, 2000.

\bibitem{Bajaj2013}
C.~Bajaj, A.~Favata, and P.~Podio-Guidugli.
\newblock On a nanoscopically-informed shell theory of single-wall carbon
  nanotubes.
\newblock {\em Eur. J. Mech. A Solids}, 42:137--157, 2013.

\bibitem{BanerjeeElliottJames:15}
A.~S. Banerjee, R.~S. Elliott, and R.~D. James.
\newblock A spectral scheme for {K}ohn-{S}ham density functional theory of
  clusters.
\newblock {\em J. Comput. Phys.}, 287:226--253, 2015.

\bibitem{BekkaHarpe20}
B.~Bekka and P.~de~la Harpe.
\newblock {\em Unitary representations of groups, duals, and characters},
  volume 250 of {\em Mathematical Surveys and Monographs}.
\newblock American Mathematical Society, Providence, RI, 2020.

\bibitem{Bernstein2009}
D.~S. Bernstein.
\newblock {\em Matrix mathematics}.
\newblock Princeton University Press, Princeton, NJ, second edition, 2009.

\bibitem{Born1954}
M.~Born and K.~Huang.
\newblock {\em Dynamical theory of crystal lattices}.
\newblock Oxford Classic Texts in the Physical Sciences. The Clarendon Press,
  Oxford University Press, New York, 1998.

\bibitem{Braun2017}
J.~Braun.
\newblock Connecting atomistic and continuous models of elastodynamics.
\newblock {\em Arch. Ration. Mech. Anal.}, 224(3):907--953, 2017.

\bibitem{Braun2013}
J.~Braun and B.~Schmidt.
\newblock On the passage from atomistic systems to nonlinear elasticity theory
  for general multi-body potentials with {$p$}-growth.
\newblock {\em Netw. Heterog. Media}, 8(4):879--912, 2013.

\bibitem{Braun2016}
J.~Braun and B.~Schmidt.
\newblock Existence and convergence of solutions of the boundary value problem
  in atomistic and continuum nonlinear elasticity theory.
\newblock {\em Calc. Var. Partial Differential Equations}, 55(5):Art. 125, 36,
  2016.

\bibitem{BraunSchmidt:22}
J.~Braun and B.~Schmidt.
\newblock An atomistic derivation of von-{K}\'arm\'an plate theory.
\newblock {\em Netw. Heterog. Media}, 17(4):613--644, 2022.

\bibitem{Brown1978}
H.~Brown, R.~B\"{u}low, J.~Neub\"{u}ser, H.~Wondratschek, and H.~Zassenhaus.
\newblock {\em Crystallographic groups of four-dimensional space}.
\newblock Wiley-Interscience, New York-Chichester-Brisbane, 1978.

\bibitem{BuchbergerSchmidt:24}
D.~Buchberger and B.~Schmidt.
\newblock A derivation of the time dependent von {K}\'arm\'an equations from
  atomistic models.
\newblock Preprint arXiv:2411.10233 [math.AP], 2024.

\bibitem{Cao2007}
G.~Cao and X.~Chen.
\newblock The effects of chirality and boundary conditions on the mechanical
  properties of single-walled carbon nanotubes.
\newblock {\em International Journal of Solids and Structures},
  44(17):5447--5465, 2007.

\bibitem{Charlier1998}
J.-C. Charlier and P.~Lambin.
\newblock Electronic structure of carbon nanotubes with chiral symmetry.
\newblock {\em Phys. Rev. B}, 57:R15037--R15039, Jun 1998.

\bibitem{Conti2006}
S.~Conti, G.~Dolzmann, B.~Kirchheim, and S.~M\"{u}ller.
\newblock Sufficient conditions for the validity of the {C}auchy-{B}orn rule
  close to {${\SO}(n)$}.
\newblock {\em J. Eur. Math. Soc. (JEMS)}, 8(3):515--530, 2006.

\bibitem{Dayal2010}
K.~Dayal and R.~D. James.
\newblock Nonequilibrium molecular dynamics for bulk materials and
  nanostructures.
\newblock {\em J. Mech. Phys. Solids}, 58(2):145--163, 2010.

\bibitem{Dresselhaus1992}
M.~Dresselhaus, G.~Dresselhaus, and R.~Saito.
\newblock Carbon fibers based on {C}$_{60}$ and their symmetry.
\newblock {\em Phys. Rev. B}, 45(11):6234--6242, 1992.

\bibitem{Dresselhaus1995}
M.~Dresselhaus, G.~Dresselhaus, and R.~Saito.
\newblock Physics of carbon nanotubes.
\newblock {\em Carbon}, 33(7):883--891, 1995.

\bibitem{DumitricaJames07}
T.~Dumitric\u{a} and R.~D. James.
\newblock Objective molecular dynamics.
\newblock {\em J. Mech. Phys. Solids}, 55(10):2206--2236, 2007.

\bibitem{E2007}
W.~E and P.~Ming.
\newblock Cauchy-{B}orn rule and the stability of crystalline solids: static
  problems.
\newblock {\em Arch. Ration. Mech. Anal.}, 183(2):241--297, 2007.

\bibitem{ElKass2014}
D.~El~Kass and R.~Monneau.
\newblock Atomic to continuum passage for nanotubes: a discrete
  {S}aint-{V}enant principle and error estimates.
\newblock {\em Arch. Ration. Mech. Anal.}, 213(1):25--128, 2014.

\bibitem{Ericksen2008}
J.~L. Ericksen.
\newblock On the {C}auchy-{B}orn rule.
\newblock {\em Math. Mech. Solids}, 13(3-4):199--220, 2008.

\bibitem{FalkJames06}
W.~Falk and R.~D. James.
\newblock Elasticity theory for self-assembled protein lattices with
  application to the martensitic phase transition in bacteriophage {T4} tail
  sheath.
\newblock {\em Phys. Rev. E}, 73:011917, Jan 2006.

\bibitem{Favata2012}
A.~Favata and P.~Podio-Guidugli.
\newblock A new {CNT}-oriented shell theory.
\newblock {\em Eur. J. Mech. A Solids}, 35:75--96, 2012.

\bibitem{FengPlucinskyJames19}
F.~Feng, P.~Plucinsky, and R.~D. James.
\newblock Phase transformations and compatibility in helical structures.
\newblock {\em J. Mech. Phys. Solids}, 131:74--95, 2019.

\bibitem{Friedrich2019}
M.~Friedrich, E.~Mainini, P.~Piovano, and U.~Stefanelli.
\newblock Characterization of optimal carbon nanotubes under stretching and
  validation of the {C}auchy-{B}orn rule.
\newblock {\em Arch. Ration. Mech. Anal.}, 231(1):465--517, 2019.

\bibitem{FrieseckeJamesJuestel16}
G.~Friesecke, R.~D. James, and D.~J\"{u}stel.
\newblock Twisted {X}-rays: incoming waveforms yielding discrete diffraction
  patterns for helical structures.
\newblock {\em SIAM J. Appl. Math.}, 76(3):1191--1218, 2016.

\bibitem{Friesecke2002}
G.~Friesecke and F.~Theil.
\newblock Validity and failure of the {C}auchy-{B}orn hypothesis in a
  two-dimensional mass-spring lattice.
\newblock {\em J. Nonlinear Sci.}, 12(5):445--478, 2002.

\bibitem{Gantmacher1998}
F.~R. Gantmacher.
\newblock {\em The theory of matrices. {V}ol. 1}.
\newblock AMS Chelsea Publishing, Providence, RI, 1998.

\bibitem{Hudson2011}
T.~Hudson and C.~Ortner.
\newblock On the stability of {B}ravais lattices and their {C}auchy-{B}orn
  approximations.
\newblock {\em ESAIM Math. Model. Numer. Anal.}, 46(1):81--110, 2012.

\bibitem{Iijima1991}
S.~Iijima.
\newblock Helical microtubules of graphitic carbon.
\newblock {\em Nature}, 354(6348):56--58, 1991.

\bibitem{James2006}
R.~D. James.
\newblock Objective structures.
\newblock {\em J. Mech. Phys. Solids}, 54(11):2354--2390, 2006.

\bibitem{Juestel2014}
D.~J{\"{u}}stel.
\newblock {\em Radiation for the Analysis of Molecular Structures with
  Non-Crystalline Symmetry: Modelling and Representation Theoretic Design}.
\newblock Dissertation, Technische Universit{\"{a}}t M{\"{u}}nchen,
  M{\"{u}}nchen, 2014.

\bibitem{Kittel1996}
C.~Kittel, P.~McEuen, and P.~McEuen.
\newblock {\em Introduction to solid state physics}, volume~8.
\newblock Wiley New York, 1996.

\bibitem{KleppnerLipsman:72}
A.~Kleppner and R.~L. Lipsman.
\newblock The {P}lancherel formula for group extensions.
\newblock {\em Ann. Sci. \'{E}cole Norm. Sup. (4)}, 5:459--516, 1972.

\bibitem{KleppnerLipsman:72b}
A.~Kleppner and R.~L. Lipsman.
\newblock The {P}lancherel formula for group extensions. {II}.
\newblock {\em Ann. Sci. \'{E}cole Norm. Sup. (4)}, 6:103--132, 1973.

\bibitem{Mackey:58}
G.~W. Mackey.
\newblock Unitary representations of group extensions. {I}.
\newblock {\em Acta Math.}, 99:265--311, 1958.

\bibitem{Moore1972}
C.~C. Moore.
\newblock Groups with finite dimensional irreducible representations.
\newblock {\em Trans. Amer. Math. Soc.}, 166:401--410, 1972.

\bibitem{OlsonOrtner:17}
D.~Olson and C.~Ortner.
\newblock Regularity and locality of point defects in multilattices.
\newblock {\em Appl. Math. Res. Express. AMRX}, (2):297--337, 2017.

\bibitem{Ortner2013}
C.~Ortner and F.~Theil.
\newblock Justification of the {C}auchy-{B}orn approximation of elastodynamics.
\newblock {\em Arch. Ration. Mech. Anal.}, 207(3):1025--1073, 2013.

\bibitem{Ratcliffe2006}
J.~G. Ratcliffe.
\newblock {\em Foundations of hyperbolic manifolds}, volume 149 of {\em
  Graduate Texts in Mathematics}.
\newblock Springer, New York, second edition, 2006.

\bibitem{Schmidt2006}
B.~Schmidt.
\newblock A derivation of continuum nonlinear plate theory from atomistic
  models.
\newblock {\em Multiscale Model. Simul.}, 5(2):664--694, 2006.

\bibitem{Schmidt2009}
B.~Schmidt.
\newblock On the derivation of linear elasticity from atomistic models.
\newblock {\em Netw. Heterog. Media}, 4(4):789--812, 2009.

\bibitem{SchmidtSteinbach:21b}
B.~Schmidt and M.~Steinbach.
\newblock {K}orn type {I}nequalities for {O}bjective {S}tructures.
\newblock Preprint arXiv:2205.04840 [math.AP], 2022.

\bibitem{SchmidtSteinbach:21a}
B.~Schmidt and M.~Steinbach.
\newblock On discrete groups of {E}uclidean isometries: representation theory,
  harmonic analysis and splitting properties.
\newblock {\em J. Fourier Anal. Appl.}, 29(6):Paper No. 70, 39, 2023.

\bibitem{Steinberg2012}
B.~Steinberg.
\newblock {\em Representation theory of finite groups}.
\newblock Universitext. Springer, New York, 2012.

\bibitem{Treacy1996}
M.~M.~J. Treacy, T.~W. Ebbesen, and J.~M. Gibson.
\newblock Exceptionally high young's modulus observed for individual carbon
  nanotubes.
\newblock {\em Nature}, 381(6584):678--680, 1996.

\bibitem{Tuukkanen2014}
S.~Tuukkanen, M.~Hoikkanen, M.~Poikelispää, M.~Honkanen, T.~Vuorinen,
  M.~Kakkonen, J.~Vuorinen, and D.~Lupo.
\newblock Stretching of solution processed carbon nanotube and graphene
  nanocomposite films on rubber substrates.
\newblock {\em Synth. Met.}, 191:28--35, 2014.

\bibitem{Wallace1998}
D.~C. Wallace.
\newblock {\em Thermodynamics of Crystals}.
\newblock Dover books on physics. Dover Publications, 1998.

\bibitem{Watkins2007}
D.~S. Watkins.
\newblock {\em The matrix eigenvalue problem}.
\newblock Society for Industrial and Applied Mathematics (SIAM), Philadelphia,
  PA, 2007.

\bibitem{Wigner:30}
E.~P. Wigner.
\newblock {\"U}ber die elastischen {Eigenschwingungen} symmetrischer {Systeme}.
\newblock {\em Nachr. Ges. Wiss. G{\"o}ttingen, Math.-Phys. Kl.},
  1930:133--146, 1930.

\end{thebibliography}

\end{document}